 \numberwithin{equation}{section}
      \theoremstyle{plain}
      \newtheorem{theorem}{Theorem}[section]
      \newtheorem{lemma}[theorem]{Lemma}
      \newtheorem{corollary}[theorem]{Corollary}
      \newtheorem{proposition}[theorem]{Proposition}
      \theoremstyle{definition}
      \newtheorem{definition}[theorem]{Definition}
      \theoremstyle{remark}
      \newcommand{\R}{{\mathbb R}}
      \def\@setcopyright{}
      \def\serieslogo@{}
\begin{document}

\title{A non-local one-phase free boundary problem from obstacle to cavitation
}


   \author{Yijing Wu}
   \address{Department of Mathematics, the University of Texas at Austin}
   \email{yijingwu@math.utexas.edu}
\maketitle

\begin{abstract}
   We consider a one-phase free boundary problem of the minimizer of the energy 
\[
J_{\gamma}(u)=\frac{1}{2}\int_{(B_1^{n+1})^+}{y^{1-2s}|\nabla u(x,y)|^2dxdy}+\int_{B_1^{n}\times \{y=0\}}{u^{\gamma}dx},
\]
with constants $0<s,\gamma<1$. It is an intermediate case of the fractional cavitation problem (as $\gamma=0$) and the fractional obstacle problem (as $\gamma=1$). We prove that the blow-up near every free boundary point is homogeneous of degree $\beta=\frac{2s}{2-\gamma}$, and flat free boundary is $C^{1,\theta}$ when $\gamma$ is close to 0.
\end{abstract}

\section{Introduction}
In this paper, we prove free boundary properties for minimizers of the following energy
\[
J_{\gamma}(u)=\frac{1}{2}\int_{(B_1^{n+1})^+}{y^{1-2s}|\nabla u(x,y)|^2dxdy}+\int_{B_1^{n}\times \{y=0\}}{u^{\gamma}dx},
\]
with $0<s,\gamma<1$, subject to $u \geq 0$. The first part of the energy is related to the extension of the fractional Laplacian operator, and the second one is considered as a penalty for the function $u$ being greater than 0. The set $\{u=0\}$ only lies on $\{y=0\}$, and is non-trivial if $u$ is small enough on $\partial B_1^{n+1}\cap \{y>0\}$. The boundary of the set $\{u>0\}$ in the topology of $\R^{n}$ is called the free boundary. And there is one important number $\beta=\frac{2s}{2-\gamma}$, which is the critical exponent in the scaling of the energy.\\

This problem is a non-local analogue of the problem introduced in \cite{AP} by Alt and Philips, in which a free boundary problem of the energy functional $\int_{B_1^n(0)}{|\nabla u|^2+|max(u,0)|^{\gamma}}$ is discussed. We are now considering the case for the fractional Laplacian operator instead of Laplacian, and this is an intersection of one-phase free boundary problems and non-local integrodifferential operators. Heuristically, two limiting classical problems, one as $\gamma \to 0$ is the Bornuelli type one-phase free boundary problem from the minimization of $J_0(u)=\frac{1}{2}\int{|\nabla u|^2+\chi_{\{u>0\}}}$, discussed by Caffarelli and Salsa in \cite{C1}; and the other one as $\gamma \to 1$ is the obstacle problem from the minimimzation of $J_1(u)=\frac{1}{2}\int{|\nabla u|^2+max(u,0)}$, discussed by Caffarelli in \cite{C2}. Analogues of both problems in the fractional cases are also dicussed in \cite{C3}\cite{D1}\cite{D2}\cite{D3} for the Bornuelli type problems, and in \cite{C4}\cite{C6} for the thin obstacle problems. These are the inspirations for our minimization problem, which is an intermediate case of the fractional one-phase cavitation problem and obstacle problem. \\

There are some previous results on the properties of the minimizers of the energy $J_{\gamma}(u)$. In \cite{Y1} by Ray Yang, optimal regularity is proved, that the minimizer is $C^{\beta}$ continuous if $\beta<1$ and is $C^{\alpha}$ continuous for any $\alpha<1$, if $\beta\geq 1$. And the minimizer along the set $\{y=0\}$ is $C^{\beta}$ continuous if $\beta<1$ and is $C^{1,\beta-1}$ continuous if $\beta\geq 1$. Non-degeneracy of the minimizer is also proved, that $\sup_{x\in B_r^n(x_0)}{u(x,0)}\geq C(n,s,\gamma)r^{\beta}$ if $x_0$ is a free boundary point. \\

This paper is divided into two parts. In the first part, we use Weiss type monotonicity formula introduced in \cite{W1} to prove blow-up profiles are homogeneous of degree $\beta=\frac{2s}{2-\gamma}$, the critical exponent, and the blow-up limit is unique regardless of subsequences, using Monneau type monotonicty formula introduced in \cite{M1}. We also prove that the half-plane solution is unique up to rotation. The other part is to prove there exists a small constant $\gamma_0>0$, such that for each $0<\gamma<\gamma_0$, flatness condition of the free boundary implies $C^{1,\theta}$ regualrity, applying the method introduced in \cite{D1} by De Silva, Savin and Sire. \\

We define the scaling of the minimizer near a free boundary point $(x_0,0)$,
\[
u_R(x,y)=\frac{u(R(x-x_0)+x_0,Ry)}{R^{\beta}}, 
\]
and the blow-up of the minimizer at a point $(x_0,0)$ on the free boundary is the limit of $u_R$ as $R \to 0$.\\

The fractional Laplacian is a non-local integral operator defined as
\[
(-\Delta)^su(x)=C_{n,s}P.V.\int_{\R^n}{\frac{u(x)-u(y)}{|x-y|^{n+2s}}dy},
\]
\[
C_{n,s}=\frac{4^s\Gamma(n/2+s)}{\pi^{n/2}|\Gamma(-s)|},
\]
with a corresponding nonlocal energy
\[
E(u)=\int_{\R^n \times \R^n}{\frac{|u(x)-u(y)|^2}{|x-y|^{n+2s}}dydx}
\]
which is hard to handle. So an extension of the function to one extra dimension is introduced by Caffarelli and Silvestre in \cite{C5}, transforming a non-local equation on $\R^n$ to an elliptic equation on the upper half space $\R^{n}\times \R^+$ with a Neumann boundary condition. Consider a fractional Laplacian equation $(-\Delta)^su(x)=f(x)$ in $\R^n$, and $u \in H^s(\R^n)$. Define an extension $U(x,y)$ in $\R^{n}\times \R^+$ by a Poisson kernel in Section 2.4 in \cite{C5}, such that $U(x,0)=u(x)$ and the extension $U(x,y)$ satisfies the following equations with Neumann boundary condition,
\begin{equation}\label{eq1}
div (y^{1-2s}\nabla U(x,y))=0 \ \ \text{in} \ \  \R^n\times \R^+
\end{equation}
and
\begin{equation}\label{eq2}
\lim_{y \to 0^+}{y^{1-2s}\partial_yU(x,y)}=-C_{n,s}(-\Delta)^su(x) \ \ \text{in} \ \  \R^n.
\end{equation}
And there is a natural energy 
\[
E(U)=\int_{\R^n\times \R^+}{y^{1-2s}|\nabla U(x,y)|^2dxdy}
\]
corresponding to the Euler-Lagrange equation \eqref{eq1}.\\

From the Euler-Lagrange equation of the energy 
\[
J_{\gamma}(u)=\frac{1}{2}\int_{(B_1^{n+1})^+}{y^{1-2s}|\nabla u(x,y)|^2dxdy}+\int_{B_1^{n}\times \{y=0\}}{u^{\gamma}dx},
\] 
the minimizer satisfies a second order PDE,
\[
div(y^{\alpha}\nabla u)=0
\]
in the upper half ball $(B_1^{n+1})^+$ in a distributional sense, and 
\[
\lim_{y\to 0^+}{y^{\alpha}\partial_yu(x,y)}=\gamma u^{\gamma-1}(x,0)
\]
on $\{u>0\}\cap \{y=0\}$. In the paper we denote $\alpha=1-2s$.

\section{Preliminaries}
Throughout this paper, we have the following notations. A point in the upper half space is $X=(x,y) \in (\R^{n+1})^+=\R^n\times \R^+$; the upper half ball of radius $R$ centered at $0$ is $(B_R^{n+1})^+=\{(x,y)\in (\R^{n+1})^+, |(x,y)|<R, y>0\}$, its boundary in $\{y>0\}$ is $(\partial B_R^{n+1})^+=\{(x,y)\in (\R^{n+1})^+, |(x,y)|=R, y>0\}$, and its boundary in $\{y=0\}$ is $B_R^{n}=\{(x,y)\in (\R^{n+1})^+, |x|<R, y=0\}$. Sometimes, we denote $B_1^+$ as $(B_1^{n+1})^+$ for simplification.\\
We define $\alpha=1-2s$ with $s\in(0,1)$ the order of fractional Laplacian, and $\beta=\frac{2s}{2-\gamma}$ is the critical scaling exponent with $0<\gamma<1$.\\
We denote the energy $J(u)=J_{\gamma}(u)$ by
\[
J_{\gamma}(u)=\frac{1}{2}\int_{(B_1^{n+1})^+}{y^{1-2s}|\nabla u(x,y)|^2dxdy}+\int_{B_1^{n}\times \{y=0\}}{u^{\gamma}dx}.
\]
The set $\{u=0\}$ which necessarily lies on $\{y=0\}$ is called the contact set of $u$, and we denote the free boundary $F(u)$ as the interface between the set $\{u>0\} \cap \{y=0\}$ and the contact set.

\subsection{Scaling of the problem}
Define $u_{\lambda}(X)=\lambda^{-\beta}u(\lambda X)$, $X=(x,y)\in (B_{\lambda^{-1}}^{n+1})^+$, then by the change of variables,
\begin{equation*}
\begin{aligned}
J(B_{\lambda^{-1}},u_{\lambda})&=\frac{1}{2}\int_{(B_{\lambda^{-1}}^{n+1})^+}{y^{\alpha}\lambda^{-2\beta}|\nabla u(\lambda x,\lambda y)|^2dxdy}\\
&+\int_{B_{\lambda^{-1}}^{n}\times \{y=0\}}{\lambda^{-\beta \gamma}u^{\gamma}(\lambda x)dx}\\
&=\frac{1}{2}\lambda^{-n+2-2\beta-\alpha}\int_{(B_1^{n+1})^+}{y^{\alpha}|\nabla u(x,y)|^2dxdy}\\
&+\lambda^{-n+1-\beta \gamma}\int_{B_1^{n}\times \{y=0\}}{u^{\gamma}dx}.
\end{aligned}
\end{equation*}
We require two equal exponents of $\lambda$, and this leads to
\[
\beta=\frac{2s}{2-\gamma}=\frac{1-\alpha}{2-\gamma},
\]
and thus
\[
J((B_{\lambda^{-1}}^{n+1})^+,u_{\lambda})=\lambda^{-n+1-\beta \gamma}J((B_1^{n+1})^+,u).
\]
So if $u$ is a minimizer for the energy in $(B_1^{n+1})^+$, then $u_{\lambda}$ is a minimizer in $(B_{\lambda^{-1}}^{n+1})^+$.

\subsection{Function space}
We are considering minimizers of energy 
\[
J_{\gamma}(u)=\frac{1}{2}\int_{(B_1^{n+1})^+}{y^{\alpha}|\nabla u(x,y)|^2dxdy}+\int_{B_1^{n}\times \{y=0\}}{u^{\gamma}dx}
\]
in the space $H^1(y^{\alpha},B_1^+)$, which is a weighted $H^1$ space, with norm
\[
\|u\|_{H^1(y^{\alpha},B_1^+)}=(\int_{(B_1^{n+1})^+}{y^{\alpha}(|\nabla u|^2+u^2)dxdy})^{1/2},
\]
and seminorm
\[
[u]_{H^1(y^{\alpha},B_1^+)}=(\int_{(B_1^{n+1})^+}{y^{\alpha}|\nabla u|^2dxdy})^{1/2}.
\]
From the extension theorem of Caffarelli and Silverstre in \cite{C5}, trace of any $H^1(y^{\alpha},B_1^+)$ function lies in $H^s(B_1^n(0))$, and Sobolev embedding makes sure it also lies in $L^2(B_1^n(0))$.

\section{Blow-ups are homogeneous of degree $\beta$}
In this section, we will use Weiss type monotonicity formula to prove the blow-up of the energy minimizer at every free boundary point is homogeneous of degree $\beta$.\\
If u is a minimizer of the energy $J(u)$, then it satisfies
\begin{equation}\label{eq3}
\begin{aligned}
div(y^{\alpha}\nabla u)=0 \ \  \text{in}\ \ (B_1^{n+1})^+,
\end{aligned}
\end{equation}
and
\begin{equation}\label{eq4}
\begin{aligned}
\lim_{y \to 0}{y^{\alpha}\partial_yu(x,y)}= \gamma u^{\gamma-1}(x,0) \ \  \text{on} \ \ B_1^n\cap \{u>0\}.
\end{aligned}
\end{equation}

Here we introduce a boundary adjusted energy and define 
\begin{equation}\label{Weiss}
\begin{aligned}
W(R,u)&=R^{-(n-1+\frac{2-\alpha \gamma}{2-\gamma})}\int_{(B_R^{n+1})^+}{y^{\alpha}|\nabla u|^2dxdy}\\
&+2 R^{-(n-1+\frac{2-\alpha \gamma}{2-\gamma})}\int_{B_R^n}{u^{\gamma}dx}\\
&-\beta R^{-(n+\frac{2-\alpha \gamma}{2-\gamma})}\int_{(\partial B_R^{n+1})^+}{y^{\alpha}u^2d\sigma}.
\end{aligned}
\end{equation}
This energy is invariant under scaling, 
\begin{equation}\label{Weiss_homo}
W(R\rho,u)=W(\rho,u_R),
\end{equation}
where
\[
u_R(x,y)=\frac{u(Rx,Ry)}{R^{\beta}}.
\]

\begin{theorem}[Weiss type monotonicity formula]\label{Weissmono}
\label{Weiss_monotonicity}
If $u$ is a minimizer of $J(u)$ and $0$ is a free boundary point, then the boundary adjusted energy $W(R,u)$ satisfies the monotonicity formula
\begin{equation}\label{Weiss_main}
\frac{d}{dR}W(R,u)=2R^{-(n-1+\frac{2-\alpha \gamma}{2-\gamma})}\int_{(\partial B_R^{n+1})^+}{y^{\alpha}(u_{\nu}-\beta \frac{u}{R})^2d\sigma}.
\end{equation}
Moreover, when $\frac{d}{dR}W(R,u)=0$, it is equivalent that
\[
0=<(x,y),\nabla u(x,y)>-\beta u(x,y)=\frac{d}{d\rho}|_{\rho=1}\frac{u(\rho x,\rho y)}{\rho^{\beta}}
\]
a.e. on $(\partial B_1^{n+1})^+$, which means u is homogeneous of degree $\beta$.
\end{theorem}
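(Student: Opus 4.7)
The plan is to use the scaling invariance \eqref{Weiss_homo}, which reduces the whole calculation to differentiating $W(1,u_R)$ in the parameter $R$. A direct computation from $u_R(X)=R^{-\beta}u(RX)$ yields
\[
\partial_R u_R=\frac{1}{R}\bigl(X\cdot\nabla u_R-\beta u_R\bigr),
\]
and on the unit sphere this equals $\frac{1}{R}((u_R)_\nu-\beta u_R)$ since the outer normal is $X$. Note also that by change of variables $u_R$ still satisfies the Euler--Lagrange equations \eqref{eq3}--\eqref{eq4} on $B_1^+$, so all the structural tools are available on the rescaled problem.

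Next, I would differentiate $W(1,u_R)$ piece by piece. The bulk term contributes $2\int_{B_1^+}y^\alpha\nabla u_R\cdot\nabla(\partial_R u_R)$; the penalty contributes $2\gamma\int_{B_1^n\cap\{u_R>0\}}u_R^{\gamma-1}\partial_R u_R$; and the boundary term contributes $-2\beta\int_{(\partial B_1)^+}y^\alpha u_R\,\partial_R u_R$. Integrating by parts on the first integral, using $\operatorname{div}(y^\alpha\nabla u_R)=0$ and the Neumann condition for the flat boundary, produces $2\int_{(\partial B_1)^+}y^\alpha(u_R)_\nu\partial_R u_R$ together with a flat-boundary piece on $B_1^n$ that exactly cancels the $\gamma$-term from the penalty. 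Substituting the formula $\partial_R u_R|_{(\partial B_1)^+}=\frac{1}{R}((u_R)_\nu-\beta u_R)$ into the three surviving spherical terms collapses them into the nonnegative square
\[
\frac{2}{R}\int_{(\partial B_1)^+}y^\alpha\bigl((u_R)_\nu-\beta u_R\bigr)^2\,d\sigma.
\]
A change of variables $X\mapsto RX$, together with the identity $\beta(2-\gamma)=1-\alpha$ used to balance the powers of $R$, rewrites this as the right-hand side of \eqref{Weiss_main}.

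The ``moreover'' statement is then automatic: because the integrand is a nonnegative square, $\frac{d}{dR}W=0$ forces $u_\nu=\beta u/R$ a.e.\ on $(\partial B_R)^+$, which at $R=|X|$ reads $X\cdot\nabla u-\beta u=0$, i.e.\ $\frac{d}{d\rho}\big|_{\rho=1}\rho^{-\beta}u(\rho X)=0$, exactly the homogeneity identity in the statement.

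The main obstacle will be making the integration by parts and the differentiation $\frac{d}{dR}\int u_R^\gamma$ rigorous up to the contact set: $u_R^{\gamma-1}$ is singular on $\{u_R=0\}$, the Neumann condition \eqref{eq4} is only stated on the positivity set, and this positivity set itself depends on $R$. I would handle this by first performing the manipulations on $\{u_R>\varepsilon\}$, then using the optimal $C^\beta$ regularity of the minimizer from \cite{Y1} to dominate $u_R^{\gamma-1}|\partial_R u_R|$ by an integrable power of the distance to the free boundary, and finally sending $\varepsilon\to 0$ so that all contact-set contributions vanish and the formal cancellation between the flat-boundary piece from integration by parts and the derivative of the penalty becomes genuine.
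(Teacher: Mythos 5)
Your proof is correct, but it takes a genuinely different computational route than the paper. The paper differentiates $W(R,u)$ directly in $R$ by Eulerian (moving-domain) coarea derivatives, and then reorganizes the seven resulting pieces $(I_1)$--$(I_7)$ using the Green-type identity \eqref{Weiss1} together with a Rellich/Pohozaev identity \eqref{Weiss4} obtained by testing the equation against $X\cdot\nabla u$. You instead use the scaling invariance $W(R,u)=W(1,u_R)$ to freeze the domain, so that the only $R$-dependence is in the rescaled function $u_R$, and then exploit the structural identity $\partial_R u_R=\frac{1}{R}(X\cdot\nabla u_R-\beta u_R)$, which on $(\partial B_1)^+$ is precisely $\frac{1}{R}((u_R)_\nu-\beta u_R)$. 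In your route, a single integration by parts of $\int y^\alpha\nabla u_R\cdot\nabla\partial_R u_R$ against $\operatorname{div}(y^\alpha\nabla u_R)=0$ produces the sphere term $2\int y^\alpha (u_R)_\nu\,\partial_R u_R$ plus a flat-boundary piece $-2\gamma\int u_R^{\gamma-1}\partial_R u_R$ that cancels the penalty derivative exactly; combining with the boundary-adjustment term $-2\beta\int y^\alpha u_R\,\partial_R u_R$ immediately collapses everything into $\frac{2}{R}\int y^\alpha((u_R)_\nu-\beta u_R)^2$, and scaling back (using $2\beta+\alpha=\frac{2-\alpha\gamma}{2-\gamma}$) gives \eqref{Weiss_main}. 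Effectively, the IBP against $\partial_R u_R$ packages the paper's Pohozaev identity and Green's identity into one step, and the nonnegative square appears automatically rather than by rearrangement. What the paper's approach buys is that each step is a coarea derivative of an integral of $u$ itself, which sidesteps any explicit appeal to differentiating under the integral sign and to second derivatives of $u_R$; what your approach buys is transparency: the monotonicity is visibly driven by the fact that $\partial_R u_R$ measures the deviation from $\beta$-homogeneity, which also makes the ``moreover'' part immediate. The technical concerns you flag (the singularity of $u_R^{\gamma-1}$ on the contact set, and the $R$-dependence of the positivity set) are real and are present in the paper's argument as well — in \eqref{Weiss1}, \eqref{Weiss2} and \eqref{Weiss4} — and your plan to treat them via the near-boundary bounds $u\sim d^\beta$, $|\nabla u|\lesssim d^{\beta-1}$, so that $u^{\gamma-1}|\partial_R u_R|\lesssim d^{\beta\gamma-1}$ is locally integrable, is the right one. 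One further simplification you could note: $\gamma u^{\gamma-1}\partial_R u_R=\frac{1}{R}\bigl(X\cdot\nabla(u^\gamma)-\beta\gamma\,u^\gamma\bigr)$, so the singular factor $u^{\gamma-1}$ never actually appears in isolation once the identity is written in terms of $u^\gamma$, which makes the $\varepsilon$-truncation and passage to the limit essentially automatic.
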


\begin{proof}[Proof of Weiss type monotonicity formula]
If u is a minimizer of the energy $J(u)$, then it satisfies
$div(y^{\alpha}\nabla u)=0$, $div(y^{\alpha}u\nabla u)=y^{\alpha}|\nabla u|^2$ in $(B_1^{n+1})^+$and
$\lim_{y \to 0}{y^{\alpha}\partial_yu(x,y)}= \gamma u^{\gamma-1}(x,0)$ on $B_1^n\cap \{u>0\}$. And the following equalities are obtained:

\begin{equation}\label{Weiss1}
\begin{aligned}
\int_{(B_R^{n+1})^+}{y^{\alpha}|\nabla u|^2dxdy}=\int_{(\partial B_R^{n+1})^+}{y^{\alpha}u u_{\nu}d\sigma}- \gamma \int_{B_R^n}{u^{\gamma}dx};
\end{aligned}
\end{equation}

\begin{equation}\label{Weiss2}
\begin{aligned}
\int_{B_R^n}{<x,\nabla u^{\gamma}>dx}=R\int_{\partial B_R^n}{u^{\gamma}d\sigma}-n\int_{B_R^n}{u^{\gamma}dx};
\end{aligned}
\end{equation}

\begin{equation}\label{Weiss3}
\begin{aligned}
&\frac{d}{dR}(\int_{(\partial B_R^{n+1})^+}{y^{\alpha}u^2d\sigma})\\
&=\frac{n+\alpha}{R}\int_{(\partial B_R^{n+1})^+}{y^{\alpha}u^2d\sigma}+2\int_{(\partial B_R^{n+1})^+}{y^{\alpha}uu_{\nu}d\sigma};
\end{aligned}
\end{equation}

\begin{equation}\label{Weiss4}
\begin{aligned}
&(n+\alpha-1)\int_{(B_R^{n+1})^+}{y^{\alpha}|\nabla u|^2dxdy}\\
&=R\int_{(\partial B_R^{n+1})^+}{y^{\alpha}(|\nabla u|^2-2u_{\nu}^2)d\sigma}+2 \int_{B_R^n}{<x,\nabla u^{\gamma}>dx}\\ 
&=R\int_{(\partial B_R^{n+1})^+}{y^{\alpha}(|\nabla u|^2-2u_{\nu}^2)d\sigma}+2 R \int_{\partial B_R^n}{u^{\gamma}d\sigma}-2 n\int_{B_R^n}{u^{\gamma}dx}.
\end{aligned}
\end{equation}

Calculate derivative of $W(R,u)$ with respect to $R$ and we can get:
\begin{equation*}
\begin{aligned}
R^{n+\frac{2-\alpha \gamma}{2-\gamma}}\frac{d}{dR}W(R,u)&=-(n-1+\frac{2-\alpha \gamma}{2-\gamma})\int_{(B_R^{n+1})^+}{y^{\alpha}|\nabla u|^2dxdy}\ \  (I_1)\\
&+R\int_{(\partial B_R^{n+1})^+}{y^{\alpha}|\nabla u|^2d\sigma} \ \ (I_2)\\
&-2 (n+\frac{\gamma-\alpha \gamma}{2-\gamma})\int_{B_R^n}{u^{\gamma}dx}\ \  (I_3)\\
&+2 R\int_{\partial B_R^n}{u^{\gamma}d\sigma}  \ \ (I_4)\\
&+\beta (n+\frac{2-\alpha \gamma}{2-\gamma}) R^{-1}\int_{(\partial B_R^{n+1})^+}{y^{\alpha}u^2d\sigma} \ \ (I_5)\\
&-\beta \frac{n+\alpha}{R}\int_{(\partial B_R^{n+1})^+}{y^{\alpha}u^2d\sigma} \ \ (I_6)\\
&-2\beta \int_{(\partial B_R^{n+1})^+}{y^{\alpha}uu_{\nu}d\sigma}. \ \ (I_7)\\
\end{aligned}
\end{equation*}
Apply \eqref{Weiss4} and \eqref{Weiss1}, then
\begin{equation*}
\begin{aligned}
R^{n+\frac{2-\alpha \gamma}{2-\gamma}}[(I_1)+(I_2)]&=2R\int_{(\partial B_R^{n+1})^+}{y^{\alpha}u_{\nu}^2d\sigma}\\
&-2 R\int_{\partial B_R^{n}}{u^{\gamma}}+2n \int_{B_R^{n}}{u^{\gamma}}\\
&-\frac{2-2\alpha}{2-\gamma} (\int_{(\partial B_R^{n+1})^+}{y^{\alpha}u u_{\nu}d\sigma}- \gamma \int_{B_R^n}{u^{\gamma}dx}).
\end{aligned}
\end{equation*}
After adding $(I_3)$ and $(I_4)$ we obtain:
\begin{equation*}
\begin{aligned}
(I_1)+(I_2)+(I_3)+(I_4)&=2R^{-(n-1+\frac{2-\alpha \gamma}{2-\gamma})}\int_{(\partial B_R^{n+1})^+}{y^{\alpha}u_{\nu}^2d\sigma}\\
&-\frac{2-2\alpha}{2-\gamma}R^{-(n+\frac{2-\alpha \gamma}{2-\gamma})}\int_{(\partial B_R^{n+1})^+}{y^{\alpha}u^2d\sigma},
\end{aligned}
\end{equation*}
And adding the last three terms $(I_5)$, $(I_6)$ and $(I_7)$, we can calculate
\[
\frac{d}{dR}W(R,u)=2R^{-(n-1+\frac{2-\alpha \gamma}{2-\gamma})}\int_{(\partial B_R^{n+1})^+}{y^{\alpha}(u_{\nu}-\beta \frac{u}{R})^2d\sigma}.
\]
\end{proof}

Let $0\in \partial\{u>0\}\cap \{y=0\}$, and consider the function $u_r(X)=r^{-\beta}u(rX)$. As $r_k \to 0$, $u_{r_k}$ converges to $u_0$ weakly in $H^1(y^{\alpha}, (\R^{n+1})^+)$. Pass to a subsequence (still denoted by $r_k$), $u_{r_k}\to u_0$ in $L^2_{loc}(y^{\alpha}, (\R^{n+1})^+)$, and in $L^2_{loc}(\R^n \times\{y=0\})$. And the blow-up $u_0$ is a global minimizer of $J(\Omega, u)$ on any $\Omega \subset (\R^{n+1})^+$. Thus, $W(r_k,u)$ is a bounded non-decreasing function of $r_k$ by Theorem \ref{Weissmono}, if $u$ is a minimizer. Then with the boundedness of the sequence $\{u_{r_k}\}$ in $H^1(y^{\alpha}, (\R^{n+1})^+)$, we can prove the following corollary.

\begin{corollary}[Blow-ups are homogeneous of degree $\beta$] If $u$ is a minimizer of $J(u)$, then the blow-up limit $u_0$ at every free boundary point is homogeneous of degree $\beta$.
\end{corollary}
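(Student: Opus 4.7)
\emph{Approach.} The plan is to combine the Weiss monotonicity of Theorem~\ref{Weissmono} with the scaling identity \eqref{Weiss_homo} to force the blow-up $u_0$ to realize the equality case $\frac{d}{dR}W(R,u_0) \equiv 0$, and then invoke the ``moreover'' clause of Theorem~\ref{Weissmono} to conclude that $u_0$ is homogeneous of degree $\beta$.

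\emph{Step 1 (limit of $W$).} By Theorem~\ref{Weissmono}, $R \mapsto W(R,u)$ is non-decreasing on $(0,1)$. Since $u \in C^{\beta}$ near $0$ by the optimal regularity from \cite{Y1}, a direct scaling computation using $\beta=\frac{2s}{2-\gamma}$ shows that $W(R,u)$ is also bounded from above, so
\[
L := \lim_{R \to 0^+} W(R,u)
\]
exists and is finite. The scaling identity \eqref{Weiss_homo} then yields, for every fixed $\rho > 0$,
\[
W(\rho, u_{r_k}) \;=\; W(r_k\rho,\,u) \;\longrightarrow\; L.
\]

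\emph{Step 2 (passage to the limit and conclusion).} The analytic core is to show that for a.e.\ $\rho>0$,
\[
W(\rho, u_{r_k}) \;\longrightarrow\; W(\rho, u_0).
\]
Weak $H^1(y^\alpha)$ convergence $u_{r_k} \rightharpoonup u_0$ gives lower semicontinuity of the weighted Dirichlet part; the matching upper bound comes from the minimality of $u_{r_k}$ by constructing a competitor on $B_\rho^+$ that equals $u_0$ on a slightly smaller ball and is interpolated on an annulus to match the trace of $u_{r_k}$ on $(\partial B_\rho^{n+1})^+$, which upgrades convergence of the Dirichlet term to strong convergence for a.e.\ $\rho$. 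The penalty $\int_{B_\rho^n} u_{r_k}^\gamma\,dx$ passes to the limit by dominated convergence, using the $L^2_{\mathrm{loc}}$ trace convergence and the bound $u^\gamma \le 1 + u^2$ for $\gamma\in(0,1)$. The spherical term $\int_{(\partial B_\rho^{n+1})^+} y^\alpha u_{r_k}^2\,d\sigma$ converges for a.e.\ $\rho$ by a slicing/Fubini argument applied to the strong interior $L^2_{\mathrm{loc}}(y^\alpha)$ convergence. Combining these with Step~1 gives $W(\rho,u_0) \equiv L$ on a set of $\rho$ of full measure, hence $\frac{d}{d\rho}W(\rho, u_0)\equiv 0$. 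By the equality case in Theorem~\ref{Weissmono},
\[
\langle (x,y), \nabla u_0(x,y)\rangle - \beta\, u_0(x,y) = 0
\]
a.e.\ on every half-sphere, which is precisely $\beta$-homogeneity of $u_0$.

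\emph{Main obstacle.} The delicate point is the upper bound on the Dirichlet piece of $W(\rho, u_{r_k})$: weak convergence alone only gives lower semicontinuity, so one must produce an admissible competitor from $u_0$ with the correct boundary trace on $(\partial B_\rho^{n+1})^+$ and quantitatively control its weighted energy in the degenerate/singular Muckenhoupt weight $y^\alpha$. The annular interpolation has to be carried out carefully in this weighted space; the $H^1(y^\alpha)$ trace theory of \cite{C5} provides the estimates needed to make the interpolation work on almost every sphere, which is what upgrades weak convergence to strong convergence and identifies each piece of $W(\rho, u_{r_k})$ with the corresponding piece of $W(\rho, u_0)$.
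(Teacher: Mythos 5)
Your proposal follows exactly the same route as the paper: use the scaling identity $W(\rho,u_{r_k})=W(\rho r_k,u)$ together with the boundedness and monotonicity of $W(\cdot,u)$ from Theorem~\ref{Weissmono} to conclude $W(\rho,u_0)\equiv W(0^+,u)$ is constant, and then invoke the equality case of the monotonicity formula. The only difference is that you flesh out the convergence $W(\rho,u_{r_k})\to W(\rho,u_0)$ (lower semicontinuity plus a competitor/cut-and-paste argument for the Dirichlet term, and trace convergence for the other terms), a step the paper states but does not detail; this is a faithful elaboration rather than a different argument.
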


\begin{proof}
Since $W(\rho r,u)=W(\rho, u_r)$ by the scaling property of $W$, then for any $R>0$,
\[
W(R,u_0)=\lim_{k \to \infty}{W(R,u_{r_k})}=\lim_{k \to \infty}{W(Rr_k,u)}=W(0^+,u)
\]
is a constant, since $W(Rr_k,u)$ is a bounded non-decreasing function of $r_k$ by Theorem \ref{Weissmono}. Thus
\[
\frac{d}{dR}W(R,u_0)=0,
\]
and this implies that $u_0$ is homogeneous of degree $\beta$.
\end{proof}

\section{Uniqueness of the blow-up profile regardless of subsequences}
We define
\[
u_0(x,y)=\lim_{r_k\to 0}{\frac{u(r_kx,r_ky)}{r_k^{\beta}}}
\]
as the blow-up profile of the minimizer $u$ near a free boundary point 0. But different subsequences may lead to different blow-up profile $u_0$. In this section, our aim is to prove that the limit is unique regardless of subsequences. \\

The blow-up $u_0$ satisfies the same equations \eqref{eq3} and \eqref{eq4} as $u$ does. For any function $p\geq 0$ homogeneous of degree $\beta$ and satisfying these equations \eqref{eq3} and \eqref{eq4},
\begin{equation*}
\begin{aligned}
W(R,p)&= R^{-(n+\frac{2-\alpha \gamma}{2-\gamma}-1)}\int_{(\partial B_R^{n+1})^+}{y^{\alpha}p(p_{\nu}-\beta\frac{p}{R})d\sigma}\\
&+(2-\gamma)  R^{-(n+\frac{2-\alpha \gamma}{2-\gamma}-1)}\int_{B_R^n}{p^{\gamma}d\sigma}\\
&=(2-\gamma)  R^{-(n+\frac{2-\alpha \gamma}{2-\gamma}-1)}\int_{B_R^n}{p^{\gamma}dx}
\end{aligned}
\end{equation*}

Now we prove the Monneau type monotonicty formula. Let $w=u-p$, and define
\[
M(R,u,p)=R^{-(n+\frac{2-\alpha \gamma}{2-\gamma})}\int_{(\partial B_R^{n+1})^+}{(u-p)^{2}d\sigma}=R^{-(n+\frac{2-\alpha \gamma}{2-\gamma})}\int_{(\partial B_R^{n+1})^+}{w^{2}d\sigma}.
\]
Here $p$ satisfies the same equations as $u$ does.\\ Then

\begin{equation*}
\begin{aligned}
\frac{d}{dR}M(R,u,p)&= \frac{d}{dR}\int_{(\partial B_1^{n+1})^+}{y^{\alpha}\frac{w^2(Rx,Ry)}{R^{2\beta}}d\sigma}\\
&=\int_{(\partial B_1^{n+1})^+}{y^{\alpha}\frac{2w(RX)(RX\cdot \nabla w(RX)-\beta w(RX))}{R^{2\beta+1}}d\sigma}\\
&=2R^{-(n+\frac{2-\alpha \gamma}{2-\gamma})}\int_{(\partial B_R^{n+1})^+}{y^{\alpha}w(x,y)(w_{\nu}-\beta \frac{w}{R})d\sigma}.
\end{aligned}
\end{equation*}

We have the following equality
\begin{equation*}
\begin{aligned}
W(R,u)&=W(R,u)-W(R,p)+(2-\gamma)  R^{-(n+\frac{2-\alpha \gamma}{2-\gamma}-1)}\int_{B_R^n}{p^{\gamma}dx}\\
&= R^{-(n+\frac{2-\alpha \gamma}{2-\gamma}-1)}\int_{(B_R^{n+1})^+}{y^{\alpha}(|\nabla u|^2-|\nabla p|^2)dxdy}\\
&-\beta  R^{-(n+\frac{2-\alpha \gamma}{2-\gamma})}\int_{(\partial B_R^{n+1})^+}{y^{\alpha}(u^2-p^2)d\sigma}\\
&+2  R^{-(n+\frac{2-\alpha \gamma}{2-\gamma}-1)}\int_{B_R^n}{(u^{\gamma}-p^{\gamma})dx}\\
&+(2-\gamma)  R^{-(n+\frac{2-\alpha \gamma}{2-\gamma}-1)}\int_{B_R^n}{p^{\gamma}dx}\\
&= R^{-(n+\frac{2-\alpha \gamma}{2-\gamma}-1)}\int_{(B_R^{n+1})^+}{y^{\alpha}|\nabla w|^2dxdy}-\beta  R^{-(n+\frac{2-\alpha \gamma}{2-\gamma})}\int_{(\partial B_R^{n+1})^+}{y^{\alpha}w^2d\sigma}\\
&+2R^{-(n+\frac{2-\alpha \gamma}{2-\gamma}-1)}\int_{(B_R^{n+1})^+}{y^{\alpha}\nabla w \cdot \nabla p dxdy}-2\beta  R^{-(n+\frac{2-\alpha \gamma}{2-\gamma})}\int_{(\partial B_R^{n+1})^+}{y^{\alpha}wpd\sigma}\\
&+2  R^{-(n+\frac{2-\alpha \gamma}{2-\gamma}-1)}\int_{B_R^n}{u^{\gamma}dx}-\gamma  R^{-(n+\frac{2-\alpha \gamma}{2-\gamma}-1)}\int_{B_R^n}{p^{\gamma}dx}\\
\end{aligned}
\end{equation*}
And since
\[
div(y^{\alpha}w \nabla p)=y^{\alpha}\nabla w \cdot \nabla p+wdiv(y^{\alpha}\nabla p)=y^{\alpha}\nabla w \cdot \nabla p,
\]
we can see
\[
\int_{(B_R^{n+1})^+}{y^{\alpha}\nabla w \cdot \nabla p dxdy}=\int_{(\partial B_R^{n+1})^+}{y^{\alpha}wp_{\nu} dxdy}-\int_{B_R^n}{w  \gamma p^{\gamma-1}dx}.
\]
Thus plug in this equation and since $p$ is homogeneous of degree $\beta$, we are able to obtain
\begin{equation*}
\begin{aligned}
W(R,u)&= R^{-(n+\frac{2-\alpha \gamma}{2-\gamma}-1)}\int_{(B_R^{n+1})^+}{y^{\alpha}|\nabla w|^2dxdy}-\beta  R^{-(n+\frac{2-\alpha \gamma}{2-\gamma})}\int_{(\partial B_R^{n+1})^+}{y^{\alpha}w^2d\sigma}\\
&+2R^{-(n+\frac{2-\alpha \gamma}{2-\gamma}-1)}\int_{(\partial B_R^{n+1})^+}{y^{\alpha}wp_{\nu} dxdy}-2\beta  R^{-(n+\frac{2-\alpha \gamma}{2-\gamma})}\int_{(\partial B_R^{n+1})^+}{y^{\alpha}wpd\sigma}\\
&+  R^{-(n+\frac{2-\alpha \gamma}{2-\gamma}-1)}\int_{B_R^n}{(2u^{\gamma}-\gamma p^{\gamma}- 2\gamma wp^{\gamma-1}) dx}\\
&=R^{-(n+\frac{2-\alpha \gamma}{2-\gamma}-1)}\int_{(B_R^{n+1})^+}{y^{\alpha}|\nabla w|^2dxdy}-\beta  R^{-(n+\frac{2-\alpha \gamma}{2-\gamma})}\int_{(\partial B_R^{n+1})^+}{y^{\alpha}w^2d\sigma}\\
&+  R^{-(n+\frac{2-\alpha \gamma}{2-\gamma}-1)}\int_{B_R^n}{(2u^{\gamma}-\gamma p^{\gamma}- 2\gamma wp^{\gamma-1}) dx}
\end{aligned}
\end{equation*}

Also, we can see
\[
\int_{(B_R^{n+1})^+}{w div(y^{\alpha}\nabla w) dxdy}=\int_{(\partial B_R^{n+1})^+}{y^{\alpha}ww_{\nu} dxdy}-\int_{B_R^n}{y^{\alpha}|\nabla w|^2dx},
\]
and
\[
div(y^{\alpha}\nabla w)=div(y^{\alpha}\nabla u)-div(y^{\alpha}\nabla p)=0.
\]
Then we will obtain
\begin{equation*}
\begin{aligned}
W(R,u)&=R^{-(n+\frac{2-\alpha \gamma}{2-\gamma}-1)}\int_{(B_R^{n+1})^+}{y^{\alpha}|\nabla w|^2dxdy}-\beta  R^{-(n+\frac{2-\alpha \gamma}{2-\gamma})}\int_{(\partial B_R^{n+1})^+}{y^{\alpha}w^2d\sigma}\\
&+  R^{-(n+\frac{2-\alpha \gamma}{2-\gamma}-1)}\int_{B_R^n}{(2u^{\gamma}-\gamma p^{\gamma}- 2\gamma wp^{\gamma-1}) dx}\\
&=R\frac{d}{dR}M(R,u,p)+  R^{-(n+\frac{2-\alpha \gamma}{2-\gamma}-1)}\int_{B_R^n}{(2u^{\gamma}-\gamma p^{\gamma}- 2\gamma wp^{\gamma-1}) dx}.
\end{aligned}
\end{equation*}
Since $0<\gamma<1$, so function $f(x)=x^{\gamma}$ is concave on $\R^+$, and thus
\[
u^{\gamma}=(w+p)^{\gamma}\leq p^{\gamma}+\gamma p^{\gamma-1}w,
\]
since $u,p\geq 0$.
Therefore,
\begin{equation*}
\begin{aligned}
W(R,u)&\leq R\frac{d}{dR}M(R,u,p)+(2-\gamma)  R^{-(n+\frac{2-\alpha \gamma}{2-\gamma}-1)}\int_{B_R^n}{p^{\gamma}dx}.
\end{aligned}
\end{equation*}

We know there is a subsequence $u_{r_j}$ such that
\[
M(0^+,u,u_0)=\lim_{r_j\to 0}M(1,u_{r_j},u_0)=0,
\]
and we also know 
\begin{equation*}
\begin{aligned}
R\frac{d}{dR}M(R,u,u_0)&\geq W(R,u)-(2-\gamma)  R^{-(n+\frac{2-\alpha \gamma}{2-\gamma}-1)}\int_{B_R^n}{u_0^{\gamma}dx}\\
&\geq W(0^+,u)-(2-\gamma)  R^{-(n+\frac{2-\alpha \gamma}{2-\gamma}-1)}\int_{B_R^n}{u_0^{\gamma}dx}\\
&= W(R,u_0)-(2-\gamma)  R^{-(n+\frac{2-\alpha \gamma}{2-\gamma}-1)}\int_{B_R^n}{u_0^{\gamma}dx}\\
&=\gamma  R^{-(n+\frac{2-\alpha \gamma}{2-\gamma}-1)}\int_{B_R^n}{u_0^{\gamma}dx}\\
&\geq 0.
\end{aligned}
\end{equation*}
Therefore, 
\[
\lim_{R \to 0}{M(R,u,u_0)}=\lim_{R \to 0}{M(1,u_R,u_0)}=0,
\]
which means the blow-up profile is unique regardless of subsequence.

\section{Uniqueness of half-plane solution}
In this section, we apply the method introduced in \cite{C3} to prove the following theorem.

\begin{theorem} If u is the minimizer in $(\mathbb{R}^{n+1})^+$, and $u(x,0)=A(x_n)_+^{\beta}$, then 
\[
A=(\frac{\beta-s}{-\beta A_1})^{1/(2-\gamma)}
\]
is determined by $s$ and $\gamma$, where
\[
A_1=-\frac{C_{1,s}}{2}\int_{-\infty}^{\infty}{\frac{(1+y)_+^{\beta}+(1-y)_+^{\beta}-2}{|y|^{1+2s}}dy}<0,
\]
with constant
\[
C_{1,s}=\frac{4^s\Gamma(1/2+s)}{\pi^{1/2}|\Gamma(-s)|}.
\]
\end{theorem}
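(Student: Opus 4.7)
The strategy is to combine the Euler--Lagrange equation \eqref{eq4} on the positivity set with the Caffarelli--Silvestre extension identity \eqref{eq2}, and to match both sides on the explicit power ansatz $u(x,0) = A(x_n)_+^\beta$. The $s$-fractional Laplacian of a pure power is itself a pure power of shifted exponent, and the exponents that arise on the two sides match exactly because $\beta = \frac{2s}{2-\gamma}$ is built into the problem; the whole computation therefore reduces to matching numerical coefficients and solving an algebraic equation for $A$.

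Concretely, I would first reduce to a one-dimensional calculation: since $u(\cdot,0)$ depends only on $x_n$, its $s$-harmonic extension depends only on $(x_n,y)$, and the $\mathbb{R}^n$-fractional Laplacian of the trace agrees with the one-dimensional fractional Laplacian of $A(t)_+^\beta$ at $t = x_n$. A change of variable $t \mapsto x_n t$ in the singular integral then gives
\[
(-\Delta)^s[A(\cdot)_+^\beta](x_n) = A\,A_1\,x_n^{\beta - 2s}, \qquad x_n > 0,
\]
where $A_1$ is precisely $(-\Delta)^s[(\cdot)_+^\beta](1)$ written in its symmetric principal-value form as in the statement. Convergence of this integral uses second-order cancellation near $y=0$ and decay at infinity (requiring $\beta < 2s$, equivalent to $\gamma < 1$); the sign $A_1 < 0$ comes from concavity of $(\cdot)_+^\beta$ for $\beta < 1$, which makes the integrand negative in the region where the measure $|y|^{-1-2s}\,dy$ is most singular, dominating the positive contribution at infinity.

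Then on $\{x_n > 0\}$, \eqref{eq4} yields $\lim_{y \to 0^+} y^\alpha \partial_y u = \gamma A^{\gamma-1} x_n^{\beta(\gamma-1)}$, while \eqref{eq2} combined with the previous step yields $\lim_{y \to 0^+} y^\alpha \partial_y u = -c\,A\,A_1\,x_n^{\beta-2s}$ for the trace-constant $c$ attached to the weighted Dirichlet energy. The exponents agree exactly by the definition of $\beta$; matching coefficients gives $A^{2-\gamma} = \gamma/(-c\, A_1)$, and the identities $\beta = \frac{2s}{2-\gamma}$ and $\beta - s = \frac{s\gamma}{2-\gamma}$ then collapse $\gamma/c$ into $(\beta-s)/\beta$, producing $A^{2-\gamma} = (\beta-s)/(-\beta A_1)$ and hence the claim after taking $(2-\gamma)$-th roots.

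The main obstacle is tracking the Caffarelli--Silvestre constant $c$ from \eqref{eq2} against the Fourier-multiplier normalization $C_{1,s}$ used to define $A_1$; it is precisely their combination that must collapse to the clean ratio $(\beta-s)/\beta$ once the definition of $\beta$ is invoked. This is the delicate point at which the method from \cite{C3} is used, and it is not mere bookkeeping: it records how the factor $\frac{1}{2}$ in front of $\int y^\alpha |\nabla u|^2$ in $J_\gamma$ normalizes the trace-to-Neumann-derivative correspondence, and the whole cleanness of the formula rests on this normalization being consistent with the one chosen for $(-\Delta)^s$.
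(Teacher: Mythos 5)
Your proposal takes a genuinely different route from the paper. The paper's argument is a domain variation in the spirit of \cite{C3}: it slides the free boundary from $\{x_n>0\}$ to $\{x_n>-\epsilon\}$ via $u_\epsilon(x)=(x+\epsilon)_+^\beta/(1+\epsilon)^\beta$ (normalized so the Dirichlet data on $\partial B_1^+$ is unchanged), computes the first-order change of $\int y^\alpha|\nabla U|^2$ through the quantities $I_1,I_2,I_3$ and the first-order change of $\int u^\gamma$, and demands first-order stationarity of $J$ under this perturbation. You instead equate, on $\{x_n>0\}$, the two formulas for $\lim_{y\to 0^+}y^\alpha\partial_y u$: the Euler--Lagrange Neumann condition $\gamma u^{\gamma-1}$ from \eqref{eq4}, and the Caffarelli--Silvestre trace relation \eqref{eq2} applied to the homogeneous trace $A(\cdot)_+^\beta$, which gives $-c\,A\,A_1\,x_n^{\beta-2s}$. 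Your observation that $\beta(\gamma-1)=\beta-2s$ (so the exponents match automatically) is exactly right, so matching coefficients does determine $A$ for the ansatz; and your route bypasses the $I_1,I_2,I_3$ computation entirely. Both arguments must yield the same $A$ since a minimizer of this form must satisfy both the Neumann condition and the free-boundary stationarity condition.

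The gap is at the step you flag as delicate, and it is more than bookkeeping left for the reader. You assert that $\gamma/c$ collapses to $(\beta-s)/\beta=\gamma/2$, i.e.\ that the trace constant satisfies $c=2$. But $c$ is a fixed $s$-dependent ratio of Gamma functions coming from the Caffarelli--Silvestre Poisson kernel (independent of $\gamma$), and whether it combines with the kernel constant $C_{1,s}$ in the definition of $A_1$ to yield precisely $2$ is a computation you never carry out; without it, $A$ is determined only up to an unverified $s$-dependent factor. Your explanation --- that the $\frac{1}{2}$ in front of the Dirichlet term in $J_\gamma$ ``normalizes the trace-to-Neumann-derivative correspondence'' --- is not correct: that $\frac{1}{2}$ affects only the coefficient appearing in the Euler--Lagrange condition, not the Poisson-kernel trace constant, which is a separate normalization. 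The attribution to ``the method from \cite{C3}'' is also misplaced: what \cite{C3} and this paper import at this point is precisely the domain-variation scheme, not a trace-constant identity, so invoking it does not close the gap.
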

\begin{proof}
First we prove the theorem when $n=1$. Let
\[
J(u)=\int_{B_1^+}y^{\alpha}|\nabla u|^2dxdy+\int_{-1}^{1}{u^{\gamma}dx},
\]
and consider $U_0(x,y)$ as the extension of $u_0(x)=(x)_{+}^{\beta}$. Define
\[
u_{\epsilon}(x)=\frac{(x+\epsilon)_+^{\beta}}{(1+\epsilon)^{\beta}},
\]
and
\[
\tilde{u}_{\epsilon}=\begin{cases} 
      u_{\epsilon}(x) & |x|\leq 1 \\
      u_0(x) & |x|>1.
   \end{cases}
\]
And the function $U_{\epsilon}(x,y)$ satisfies the following equation:
\[
\begin{cases} 
     div(y^{\alpha}\nabla U_{\epsilon}(x,y))=0 \ \ in \ \ (B_1^2)^+ \\
     U_{\epsilon}(x,0)=u_{\epsilon}(x) \ \ \ \ \ \ |x|\leq 1\\
     U_{\epsilon}(x,y)=U_0(x,y) \ \ on \ \ (\partial B_1^2)^+.
   \end{cases}
\]

If $AU_0$ is a local minimizer of $J(u)$, then $J(AU_0)\leq J(AU_\epsilon)$ for any $\epsilon$, that is
\[
A^2\int_{B_1^+}y^{\alpha}|\nabla U_0|^2dxdy+A^{\gamma} \int_{-1}^{1}{u_0^{\gamma}dx} \leq A^2\int_{B_1^+}y^{\alpha}|\nabla U_{\epsilon}|^2dxdy+A^{\gamma}\int_{-1}^{1}{u_{\epsilon}^{\gamma}dx}.
\]
We can see
\begin{equation*}
\begin{aligned}
\int_{-1}^{1}{u_{\epsilon}^{\gamma}dx}-\int_{-1}^{1}{u_0^{\gamma}dx}&=\frac{1}{(1+\epsilon)^{\beta\gamma}}\frac{1}{1+\beta\gamma}(1+\epsilon)^{\beta\gamma+1}-\frac{1}{1+\beta\gamma}\\
&=\frac{\epsilon}{1+\beta\gamma},
\end{aligned}
\end{equation*}
and
\begin{equation*}
\begin{aligned}
&(-1)[\int{y^{\alpha}|\nabla U_0|^2}-\int{y^{\alpha}|\nabla U_{\epsilon}|^2}]\\
&=\int{y^{\alpha}|\nabla (U_0-U_{\epsilon})|^2}+2\int{y^{\alpha}\nabla U_0 \nabla(U_\epsilon-U_0)}\\
&=I_2+2I_1.
\end{aligned}
\end{equation*}
First let us calculate $I_1$:
\begin{equation*}
\begin{aligned}
I_1&=\int{y^{\alpha}\nabla U_0 \nabla(U_\epsilon-U_0)}\\
&=\int_{(B_1^2)^+}{div(y^{\alpha}\nabla U_0(U_\epsilon-U_0))}-\int{((U_\epsilon-U_0)div(y^{\alpha}\nabla U_0)}\\
&=\int_{(\partial B_1^2)^+}{y^{\alpha}(U_0)_{\nu}(U_\epsilon-U_0))}-\int_{-1}^1{(\lim_{y\to 0^+}{y^{\alpha}\partial_yU_0})(U_\epsilon-U_0)}\\
&=\int_{-1}^1{(-\Delta)^su_0(x)(u_\epsilon-u_0)}.\\
\end{aligned}
\end{equation*}
By the homogeneity property of $u_0$, we can calculate that when $x>0$,
\begin{equation*}
\begin{aligned}
(-\Delta)^su_0(x)&=-\frac{C_{1,s}}{2}\int_{-\infty}^{\infty}{\frac{(x+y)_+^{\beta}+(x-y)_+^{\beta}-2(x)_+^{\beta}}{|y|^{1+2s}}dy}\\
&=x^{\beta-2s}\frac{-C_{1,s}}{2}\int_{-\infty}^{\infty}{\frac{(1+y)_+^{\beta}+(1-y)_+^{\beta}-2}{|y|^{1+2s}}dy}\\
&=A_1x^{\beta-2s},
\end{aligned}
\end{equation*}
and when $x<0$,
\begin{equation*}
\begin{aligned}
(-\Delta)^su_0(x)&=-(-x)^{\beta-2s}C_{1,s}P.V.\int_{1}^{\infty}{\frac{(y-1)^{\beta}}{|y|^{1+2s}}dy}\\
&=A_2(-x)^{\beta-2s}.
\end{aligned}
\end{equation*}
Notice that $A_1,A_2<0$, with
\[
A_1=-\frac{C_{1,s}}{2}\int_{-\infty}^{\infty}{\frac{(1+y)_+^{\beta}+(1-y)_+^{\beta}-2}{|y|^{1+2s}}dy},
\]
and
\[
A_2=-C_{1,s}P.V.\int_{1}^{\infty}{\frac{(y-1)^{\beta}}{|y|^{1+2s}}dy}.
\]
Then we can calculate that
\begin{equation*}
\begin{aligned}
I_1&=\int_{-1}^1{(-\Delta)^su_0(x)(u_\epsilon-u_0)}\\
&=A_1\int_0^1{x^{\beta-2s}(\frac{(x+\epsilon)_+^{\beta}}{(1+\epsilon)^{\beta}}-x^{\beta})dx}\\
&+A_2\int_{-1}^0{(-x)^{\beta-2s}\frac{(x+\epsilon)_+^{\beta}}{(1+\epsilon)^{\beta}}dx}\\
&=A_1\beta(\frac{1}{2\beta-2s}-\frac{1}{2\beta-2s+1})\epsilon+o(\epsilon).
\end{aligned}
\end{equation*}
Then we try to calculate $I_2$, 
\begin{equation*}
\begin{aligned}
I_2&=\int{y^{\alpha}|\nabla (U_0-U_{\epsilon})|^2}\\
&=\int{y^{\alpha} (U_{\epsilon}-U_0)\nabla(U_{\epsilon}-U_0)}-\int{(U_{\epsilon}-U_0)div(y^{\alpha}\nabla(U_{\epsilon}-U_0))}\\
&=\int_{(\partial B_1^2)^+}{y^{\alpha} (U_{\epsilon}-U_0)(U_{\epsilon}-U_0)_{\nu}}-\int_{-1}^{1}{(\lim_{y\to 0^+}{y^{\alpha}\partial_y(U_{\epsilon}-U_0}))(U_{\epsilon}-U_0)}\\
&=\int_{-1}^{1}{(-\Delta)^s(\tilde{u}_{\epsilon}-u_0)(u_{\epsilon}-u_0)}\\
&=\int_{-1}^{1}{(-\Delta)^s(\tilde{u}_\epsilon-u_{\epsilon})(u_{\epsilon}-u_0)}+\int_{-1}^{1}{(-\Delta)^s(u_{\epsilon}-u_0)(u_{\epsilon}-u_0)}.
\end{aligned}
\end{equation*}
Define 
\[
g_{\epsilon}(x)=\tilde{u}_\epsilon(x)-u_{\epsilon}(x)=\epsilon h(x)=\begin{cases} 
 0 & x\leq 1 \\
 \epsilon\beta(x^{\beta}-x^{\beta-1})+o(\epsilon)  & x>1,
   \end{cases}
\]
and
\[
(-\Delta)^s(\tilde{u}_\epsilon-u_{\epsilon})(x)=\epsilon C_{1,s}P.V.\int_{-\infty}^{\infty}{\frac{h(x+y)-h(x)}{|y|^{1+2s}}dy},
\]
and
\[
\int_{-1}^{1}{(-\Delta)^s(\tilde{u}_\epsilon-u_{\epsilon})(u_{\epsilon}-u_0)}\leq 2max|u_{\epsilon}-u_0|O(\epsilon)=o(\epsilon).
\]
Thus,
\[
I_2=o(\epsilon)+\int_{-1}^{1}{(-\Delta)^su_{\epsilon}(u_{\epsilon}-u_0)}-\int_{-1}^{1}{(-\Delta)^su_0(u_{\epsilon}-u_0)}=o(\epsilon)+I_3-I_1,
\]
where
\[
I_3=\int_{-1}^{1}{(-\Delta)^su_{\epsilon}(u_{\epsilon}-u_0)}.
\]
Since $u_{\epsilon}(x)=\frac{(x+\epsilon)_+^{\beta}}{(1+\epsilon)^{\beta}}$, then
\[
(-\Delta)^su_{\epsilon}(x)=\begin{cases} 
     \frac{1}{(1+\epsilon)^{\beta}}A_1(x+\epsilon)^{\beta-2s} & x+\epsilon>0 \\
       \frac{1}{(1+\epsilon)^{\beta}}A_2(-x-\epsilon)^{\beta-2s} & x+\epsilon>0,
   \end{cases}
\]
and we can calculate $I_3$ that
\begin{equation*}
\begin{aligned}
I_3&=\int_{-1}^{1}{(-\Delta)^su_{\epsilon}(u_{\epsilon}-u_0)}\\
&=\int_{0}^{1}{ \frac{1}{(1+\epsilon)^{\beta}}A_1(x+\epsilon)^{\beta-2s}(\frac{(x+\epsilon)^{\beta}}{(1+\epsilon)^{\beta}}-x^{\beta})dx}\\
&+\int_{-\epsilon}^{0}{ \frac{1}{(1+\epsilon)^{\beta}}A_1(x+\epsilon)^{\beta-2s}\frac{(x+\epsilon)^{\beta}}{(1+\epsilon)^{\beta}}dx}\\
&=\epsilon A_1(\frac{\beta-2s+1}{2\beta-2s+1}-\frac{\beta-2s}{2\beta-2s})+o(\epsilon)\\
&=\epsilon A_1\beta(\frac{1}{2\beta-2s}-\frac{1}{2\beta-2s+1})+o(\epsilon). 
\end{aligned}
\end{equation*}
Therefore,
\begin{equation*}
\begin{aligned}
A^2(I_2+2I_1)&=(-1)A^2[\int{y^{\alpha}|\nabla U_0|^2}-\int{y^{\alpha}|\nabla U_{\epsilon}|^2}]\\
&=-2\epsilon A^2A_1\beta(\frac{1}{2\beta-2s}-\frac{1}{2\beta-2s+1})+o(\epsilon),
\end{aligned}
\end{equation*}
and
\[
A^{\gamma}\int_{-1}^{1}{u_{\epsilon}^{\gamma}dx}-A^{\gamma}\int_{-1}^{1}{u_0^{\gamma}dx}=A^{\gamma}\frac{\epsilon}{1+\beta\gamma},
\]
and since $AU_0$ is a local minimizer of energy $J(u)$, it is required that for all $\epsilon>0$ and $\epsilon<0$,
\[
-2\epsilon A^2A_1\beta(\frac{1}{2\beta-2s}-\frac{1}{2\beta-2s+1})+o(\epsilon)\leq A^{\gamma}\frac{\epsilon}{1+\beta\gamma}.
\]
and this means that
\[
A=(\frac{\beta-s}{-\beta A_1})^{1/(2-\gamma)},
\]
and A is determined by $s$ and $\gamma$, where
\[
A_1=-\frac{C_{1,s}}{2}\int_{-\infty}^{\infty}{\frac{(1+y)_+^{\beta}+(1-y)_+^{\beta}-2}{|y|^{1+2s}}dy}<0.
\]
Just to notice, as $\gamma \to 0$, which is the case of fractional one-phase Bournelli-type problem, the constant $A_1=O(\beta-s)$ and this ensures the unique half plane minimizer will not go to 0.\\

Then applying the same proof in Theorem 1.4 in \cite{C3}, we prove the theorem for general $n$.
\end{proof}

\section{positive density when $\gamma$ is small enough}
When $\gamma \to 1$, in the thin obstacle problem \cite{C4}, near a free boundary point $x_0$, the set $\{u=0\}\cap B_1^n(x_0)$ does not always have positive density. In this section, we try to prove there exists a positive number $\gamma_0>0$, and for each $0<\gamma<\gamma_0$, the minimizer of energy $J_{\gamma}(u)$ has positive density of zero set near every free boundary point. 

\begin{theorem}
There exists $\gamma_0=\gamma_0(n,s)>0$ and $\delta>0$ such that for each $0<\gamma<\gamma_0$, if $u_{\gamma}$ is a minimizer of $J_{\gamma}(u)$, then 
\[
L^{n}(\{u_{\gamma}=0\}\cap B_1^n)\geq \delta>0.
\]

\end{theorem}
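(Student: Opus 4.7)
The plan is to argue by contradiction, reducing the statement to the positive density property that is known for the limiting fractional Bernoulli problem ($\gamma=0$). Suppose the conclusion fails; then there exist sequences $\gamma_k \to 0^+$ and minimizers $u_k$ of $J_{\gamma_k}$ with $0 \in F(u_k)$ and uniformly controlled trace on $(\partial B_1^{n+1})^+$, such that $L^n(\{u_k=0\}\cap B_1^n)\to 0$.

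The first step is to establish compactness. Since $\beta_k=\frac{2s}{2-\gamma_k}\to s$, one checks that the optimal regularity and non-degeneracy estimates from \cite{Y1} can be made uniform for $\gamma$ in a neighborhood of $0$; this yields a uniform modulus of continuity for $\{u_k\}$. Passing to a subsequence, $u_k\to u_\infty$ uniformly on compact subsets of $\overline{B_1^+}$ and weakly in $H^1(y^\alpha,B_1^+)$, with $u_\infty(0)=0$ and $u_\infty$ inheriting the boundary data along $(\partial B_1^{n+1})^+$ in trace.

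The heart of the proof is to identify $u_\infty$ as a minimizer of the fractional Bernoulli-type functional
\[
J_0(u)=\tfrac12\int_{B_1^+}y^{\alpha}|\nabla u|^2\,dx\,dy+L^n(\{u>0\}\cap B_1^n).
\]
For any admissible competitor $v$ of $u_\infty$, one produces competitors $v_k$ for $u_k$ having the same boundary trace as $u_k$ on $(\partial B_1^{n+1})^+$ and $v_k\to v$. Minimality of $u_k$ gives $J_{\gamma_k}(u_k)\le J_{\gamma_k}(v_k)$, and pointwise $u^{\gamma_k}\to\chi_{\{u>0\}}$ together with uniform boundedness allows passage to the limit \emph{provided} the contact sets converge in measure. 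Once $u_\infty$ is known to minimize $J_0$, the positive density of $\{u_\infty=0\}$ near a free boundary point (the fractional analogue of the classical Alt--Caffarelli estimate, available in the setting of \cite{C3,D1}) gives $L^n(\{u_\infty=0\}\cap B_1^n)\ge\delta_0(n,s)>0$.

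The main obstacle is precisely the convergence of the zero sets, which is needed both to identify the limit energy in Step 3 and to derive the final contradiction. Because $u^{\gamma_k}\to 1$ pointwise on $\{u>0\}$ regardless of the magnitude of $u$, uniform convergence of $u_k$ alone does not imply $L^n(\{u_k=0\})\to L^n(\{u_\infty=0\})$; one cannot distinguish ``truly zero'' from ``barely positive'' regions by pointwise arguments. To overcome this, I would exploit the singular nature of the Euler--Lagrange condition $\lim_{y\to 0^+}y^{\alpha}\partial_y u_k=\gamma_k u_k^{\gamma_k-1}$ on $\{u_k>0\}\cap\{y=0\}$, which diverges as $u_k\to 0^+$. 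Combining this with a quantitative energy comparison against the $y^\alpha$-harmonic replacement carrying zero Dirichlet data on a small ball contained in $\{u_\infty=0\}$ should force $L^n(\{u_k>0\}\cap E)\to 0$ on any such set $E$, and hence $L^n(\{u_k=0\})\gtrsim L^n(\{u_\infty=0\})-o(1)$, contradicting $L^n(\{u_k=0\}\cap B_1^n)\to 0$.
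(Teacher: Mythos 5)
Your overall strategy matches the paper's: argue by compactness and contradiction, pass to a limit $u_\infty$ as $\gamma_k\to0$, identify it as a minimizer of the $\gamma=0$ cavitation functional $J_0$ via $\Gamma$-convergence, and invoke the positive density of the contact set for $J_0$-minimizers (from \cite{C3}) to obtain a contradiction. You also correctly identify the central obstacle: uniform convergence of $u_k$ does not control the measure of $\{u_k=0\}$, because $u_k$ could be positive but vanishingly small on a set of large measure, and $u_k^{\gamma_k}\to 1$ there just as it does where $u_k$ stays bounded away from zero.

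The genuine gap is in how you propose to close that obstacle. In Step 1 you assert that ``the optimal regularity and non-degeneracy estimates from \cite{Y1} can be made uniform for $\gamma$ in a neighborhood of $0$'' as if it were a routine check, but the non-degeneracy constant $C(n,s,\gamma)$ in \cite{Y1} is not a priori stable as $\gamma\to0$, and establishing $\gamma$-uniform non-degeneracy is precisely the technical heart of the paper's argument, carried out as a standalone lemma (Lemma \ref{nondegeneracy}): $u_\gamma(x,0)\ge C_0\,d(x,\partial\{u_\gamma>0\})^\beta$ with $C_0$ independent of $\gamma$. Its proof combines a Green's-identity estimate exploiting the singular Neumann condition $\gamma u^{\gamma-1}$ (giving a bound on $\gamma\epsilon^{\gamma-1}$, where $\epsilon$ is the value of $u_\gamma$ at unit distance from the free boundary) with an energy comparison against a truncated competitor $\min\{u,\epsilon P\}$ (forcing $\epsilon^{\gamma}\to0$ if $\epsilon\to0$), and the two conclusions are incompatible unless $\epsilon$ stays bounded below uniformly in $\gamma$. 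Your closing paragraph gestures at the same two ingredients — the singular Neumann term and an energy comparison with a $y^\alpha$-harmonic replacement — but leaves them as a sketch (``should force\ldots''); without actually proving a $\gamma$-uniform non-degeneracy statement, neither the zero-set convergence needed to justify the $\Gamma$-limit identification nor the final contradiction goes through.
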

We prove the theorem by the method of compactness. And before the proof, a lemma of non-degeneracy is required.
\begin{lemma}\label{nondegeneracy}
Assume $u_{\gamma}$ is a minimizer of the energy $J_{\gamma}(u)$ and $0$ is a free boundary point. There exists a positive constant $C_0>0$ independent of $\gamma$, such that for each $x \in B_{1/2}^n \cap \{u>0\}$, 
\[
u_{\gamma}(x,0)\geq C_0(d(x,\partial\{u_{\gamma}>0\}))^{\beta}.
\]
\end{lemma}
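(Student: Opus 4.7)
I will argue by contradiction and compactness, using the scale invariance of the problem (Section~2.1), Yang's sup non-degeneracy and $C^{\beta}$ regularity from \cite{Y1}, together with unique continuation and a Hopf principle for the Caffarelli--Silvestre extension operator $\mathrm{div}(y^{\alpha}\nabla\cdot)$.

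First, fix $x_0\in B_{1/2}^n\cap\{u_\gamma>0\}$ and set $d:=d(x_0,\partial\{u_\gamma>0\})$. The rescaling
\[
v(X):=d^{-\beta}u_\gamma(x_0+dX)
\]
is, by Section~2.1, a minimizer of $J_\gamma$ on a large half-ball with $v>0$ on $B_1^n(0)$ and $\partial\{v>0\}$ touching $\partial B_1(0)$. The lemma thus reduces to the scale-invariant claim $v(0,0)\geq C_0$ for a constant $C_0$ depending only on $n$ and $s$.

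Suppose the bound fails along sequences $\gamma_k\in(0,1)$ and rescaled minimizers $v_k$ with $v_k(0,0)\to 0$. Applying sup non-degeneracy at any $z_k\in\partial\{v_k>0\}\cap\partial B_1(0)$ yields a point $w_k\in B_{1/2}(z_k)$ with $v_k(w_k,0)\geq c_0(1/2)^{\beta_k}\geq c_0(1/2)^{2s}$, uniformly in $\gamma_k$. Combined with the uniform $C^{\beta_k}$ estimate of \cite{Y1}, the family $\{v_k\}$ is locally uniformly bounded and equicontinuous on $(\R^{n+1})^+$. Extract a subsequence so that $\gamma_k\to\gamma_\infty\in[0,1]$, $\beta_k\to\beta_\infty\in[s,2s]$, and $v_k\to v_\infty$ locally uniformly (and weakly in the weighted $H^1$). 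The limit $v_\infty$ satisfies $v_\infty\geq 0$, $v_\infty(0,0)=0$, $\mathrm{div}(y^{\alpha}\nabla v_\infty)=0$ distributionally in $(\R^{n+1})^+$, is non-trivial, and is a minimizer of the limiting energy.

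The contradiction now comes from showing $v_\infty$ cannot have this interior zero at $(0,0)$. Either $0$ lies in the interior of $\{v_\infty=0\}$---ruled out by unique continuation for $\mathrm{div}(y^{\alpha}\nabla\cdot)=0$ against $v_\infty\not\equiv 0$---or $(0,0)\in\partial\{v_\infty>0\}$. In the latter case I invoke the Neumann condition $\lim_{y\to 0^+}y^{\alpha}\partial_y v_\infty(x,0)=\gamma_\infty v_\infty^{\gamma_\infty-1}(x,0)$ on $\{v_\infty>0\}$: as $x\to 0$ along the positivity set the right-hand side blows up (since $\gamma_\infty-1<0$), forcing a Hopf-type growth of $v_\infty$ in the $y$-direction incompatible with $v_\infty(0,0)=0$ and $C^{\beta_\infty}$ continuity. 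The endpoint case $\gamma_\infty=0$ is handled using the remark after Theorem~4.1, which shows the non-degeneracy constant of the limiting Bernoulli problem stays positive.

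\textbf{Main obstacle.} The central technical difficulty is keeping every ingredient---the Hölder constant, the sup non-degeneracy constant, and the Hausdorff convergence of free boundaries used in passing to the limit---\emph{uniform in $\gamma\in(0,1)$}, especially as $\gamma\to 0^+$ and $\gamma\to 1^-$, which requires tracking the $\gamma$-dependence in \cite{Y1}. A secondary delicate point is the passage to the limit of the nonlinear Neumann datum $\gamma_k v_k^{\gamma_k-1}$, which I handle through the variational characterization of minimizers rather than pointwise.
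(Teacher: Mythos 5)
The proposal takes a compactness route that is genuinely different from the paper's direct variational argument, but the proposed contradiction has two fatal gaps.

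\textbf{The unique continuation step does not apply.} If $0$ lies in the interior of $\{v_\infty=0\}\subset\{y=0\}$, this is a boundary vanishing, not an interior one, so interior unique continuation for $\mathrm{div}(y^{\alpha}\nabla\cdot)$ says nothing. Boundary unique continuation would require \emph{both} Cauchy data to vanish there, but they do not: for a nonnegative, nontrivial $A_2$-harmonic $v_\infty$ vanishing at $x_0$ on $\{y=0\}$, the Caffarelli--Silvestre correspondence gives $\lim_{y\to0^+}y^{\alpha}\partial_y v_\infty(x_0,0)=-C_{n,s}(-\Delta)^s v_\infty(x_0,0)=C_{n,s}\cdot C\int v_\infty(z,0)|x_0-z|^{-n-2s}dz>0$. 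So the conormal derivative is strictly positive on the interior of the contact set, and no unique-continuation contradiction is available. (The half-plane solution $U(x_n,y)$ is an explicit nontrivial solution with an open contact set.)

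\textbf{The Hopf-type argument at a free boundary point is not a contradiction.} If $(0,0)\in\partial\{v_\infty>0\}$, the blow-up of the Neumann datum $\gamma_\infty v_\infty^{\gamma_\infty-1}$ near the free boundary is a normal, expected feature of the problem, fully compatible with the $C^{\beta}$ regularity ($\beta<1$) of the solution; it does not force anything incompatible with $v_\infty(0,0)=0$. Free boundary points of $v_\infty$ are legitimate. More to the point, concluding that $(0,0)$ cannot be a free boundary point of $v_\infty$ when the free boundaries of the $v_k$ are at distance one from the origin is \emph{precisely} pointwise non-degeneracy: you need it in the limit before you can conclude. Yang's supremum non-degeneracy only prevents $v_\infty\equiv 0$; it does not prevent the positivity set from collapsing at $(0,0)$ in the limit, since the supremum in a ball about a free boundary point can be attained arbitrarily close to the free boundary. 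So the compactness scheme as set up is circular at the key step.

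The paper avoids all of this by arguing directly at the original scale: after a boundary Harnack step (so $u_\gamma\sim\epsilon$ near $(x_0,0)$), Green's identity against a smooth test function gives a bound of the form $\gamma\epsilon^{\gamma-1}\leq C$, and comparing $u$ with the truncated competitor $v=\min\{u,\epsilon P\}$ shows that if $\epsilon\to 0$ as $\gamma\to 0$ then $\epsilon^{\gamma}\to 0$. These two requirements are mutually incompatible as $\gamma\to 0$, so $\epsilon$ stays bounded below. No compactness, unique continuation, or Hopf input is used. I'd suggest reworking along those lines, or if you insist on compactness, you must replace the final step with a genuine obstruction (for instance, proving Hausdorff convergence of the free boundaries first, which itself requires the kind of pointwise lower bound you want). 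Also note: there is no ``remark after Theorem 4.1'' in this paper; Theorem 4.1 referenced in the proof is the boundary Harnack in Yang's paper [Y1].
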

\begin{proof}
Up to rescaling, it is enough to show, if $(x_0,0)$ is at distance 1 from the free boundary and $u_{\gamma}(x_0,0)>0$, then $\epsilon=u_{\gamma}(x_0,0)$ cannot be too small, and $\epsilon$ will not go to 0 as $\gamma \to 0$.\\

By the Harnack inequality in the upper half space (since $y^{\alpha}$ belongs to the class of $A_2$ functions defined by Muchenhoupt in \cite{M1}) and the variant boundary Harnack inequality proved in Theorem 4.1 in \cite{Y1}, there exists $c',C'>0$ independent of $\gamma$, such that
\[
0<c'\epsilon\leq u_{\gamma}(x,y)\leq C'\epsilon 
\]
in $(B_{1/2}^{n+1}(x_0,0))^+$. Take test function $\phi \in C_C^{\infty}((B_{1/2}^{n+1}(x_0,0))^+)$, and apply standard Green's indentity to obtain
\begin{equation*}
\begin{aligned}
&\int_{\{u_{\gamma}>0\}\cap B_{1/2}^n(x_0)}{u_{\gamma} (\lim_{y \to 0^+}{y^{\alpha}\partial_y\phi})-\phi(\lim_{y \to 0^+}{y^{\alpha}\partial_yu_{\gamma}})}\\
&=-\int_{(B_{1/2}^{n+1}(x_0,0))^+}{u_{\gamma} div(y^{\alpha}\nabla \phi)},
\end{aligned}
\end{equation*}
using $div(y^{\alpha}\nabla u_{\gamma})=0$ in the formula. Then
\begin{equation}\label{smallepsilon}
\begin{aligned}
|\int_{\{u_{\gamma}>0\}\cap B_{1/2}^n(x_0)}{\gamma \phi (C'\epsilon)^{\gamma-1}}|& \leq |\int_{\{u_{\gamma}>0\}\cap B_{1/2}^n(x_0)}{\gamma \phi  u_{\gamma}^{\gamma-1}}|\\
&\leq |\int_{\{u>0\}\cap B_{1/2}^n(x_0)}{u_{\gamma} (\lim_{y \to 0^+}{y^{\alpha}\partial_y\phi})}|\\
&+|\int_{(B_{1/2}^{n+1}(x_0,0))^+}{u_{\gamma} div(y^{\alpha}\nabla \phi)}|.
\end{aligned}
\end{equation}
Since $d(x,\partial\{u_{\gamma}>0\})\leq C$ if $(x,y)\in (B_{1/2}^{n+1}(x_0,0))^+$, then
\[
u(x,y)\leq \tilde{C}
\]
by $C^{\beta}$ estimates of the minimizer. And the test function $\phi \in C_C^{\infty}((B_{1}^{n+1}(x_0,0))^+)$ is smooth enough, so the integral of $\lim_{y \to 0^+}{y^{\alpha}\partial_y\phi}$ and $ div(y^{\alpha}\nabla \phi)$ are both bounded, and therefore by \eqref{smallepsilon}, $\epsilon$ cannot be too small.\\

However, $\gamma \epsilon^{\gamma-1}<\infty$ cannot ensure $\epsilon\geq C>0$ as $\gamma \to 0$. To prove that $\epsilon \geq C_0$ independent of $\gamma$, we consider a smooth function $P(x,y)\geq 0$ defined on $(B_{1/2}^{n+1}(x_0,0))^+$, with $P(x,y)=0$ in $(B_{1/4}^{n+1}(x_0,0))^+$ and $P(x,y)=2C'$ in $(B_{7/16}^{n+1}(x_0,0))^+\setminus (B_{3/8}^{n+1}(x_0,0))^+$. And define a function $v(x,y)=\min{\{u(x,y),\epsilon P(x,y)\}}$ on $(B_{1/2}^{n+1}(x_0,0))^+$. Then $J(v)\geq J(u)$ since $u(x,y)$ is the energy minimizer. First we can see
\[
\int_{(B_{1/2}^{n+1}(x_0,0))^+}{y^{\alpha}|\nabla v|^2dxdy}-\int_{(B_{1/2}^{n+1}(x_0,0))^+}{y^{\alpha}|\nabla u|^2dxdy}\leq O(\epsilon)
\]
from our definition of the function $v(x,y)$. (Same as in Section 3.4, proof of Theorem 1.2 in \cite{C3}). And
\[
\int_{B_{1/2}^n(x_0)}{v^{\gamma}-u^{\gamma}}\leq -\int_{B_{1/4}^n(x_0)}{u^{\gamma}},
\]
since $v=0$ on $B_{1/4}^n(x_0)$ and $v \leq u$ on $B_{1/2}^n(x_0)$. Therefore,
\begin{equation}\label{uvcomp}
\begin{aligned}
J(v)-J(u)\leq O(\epsilon)-\int_{B_{1/4}^n(x_0)}{u^{\gamma}}.
\end{aligned}
\end{equation}
However, $J(v)\geq J(u)$ since $u$ is the energy minimizer. Therefore, if $\epsilon \to 0$ as $\gamma \to 0$, then \eqref{uvcomp} requires $\epsilon^{\gamma} \to 0$ as $\gamma \to 0$. If not, \eqref{uvcomp} will lead to a contradiction of $u$ being the energy minimizer. Therefore, now it is required that, if $\epsilon \to 0$ as $\gamma \to 0$, then
\[
\lim_{\gamma \to 0}{\epsilon^{\gamma}}=0
\]
and
\[
\lim_{\gamma \to 0}{\gamma \epsilon^{\gamma-1}}<\infty
\]
from \eqref{uvcomp} and \eqref{smallepsilon}.

The first limit shows $\epsilon=e^{-\frac{1}{\gamma o(\gamma)}}$, and then as $\gamma \to 0$. 
\begin{equation*}
\begin{aligned}
\gamma \epsilon^{\gamma-1}&=\gamma e^{\frac{1}{\gamma o(\gamma)}-\frac{1}{o(\gamma)}}
&\to \gamma e^{\frac{1}{\gamma o(\gamma)}} \to \infty
\end{aligned}
\end{equation*}
Thus $\epsilon$ will not converge to $0$ as $\gamma \to 0$, and therefore, $\epsilon \geq C_0$ independent of $\gamma$.
\end{proof}
With the non-degeneracy property of the minimizer, we can prove the theorem by the method of compactness.
\begin{proof}
If not, then there exists $\gamma_k \to 0$ with $\{u_{\gamma_k}^j\}_{j=1}^{\infty}$ a sequence of minimizers of $J_{\gamma_k}$, and
\begin{equation}\label{gammakjto0}
\lim_{\gamma_k\to 0, j\to \infty}{L^{n}(\{u_{\gamma_k}^j=0\}\cap B_1^n)=0}.
\end{equation}
Without loss of generality, we assume $0$ is a common free boundary point and take blow-up limit at point $0$. Let $u_0^j=\lim_{\gamma_k \to 0}{u_{\gamma_k}^j}$. By the $\Gamma-$convergence of
\[
J_{\gamma}(u) \to J_0(u)=\int_{(B_1^{n+1})^+}{y^{\alpha}|\nabla u|^2}+\int_{B_1^n}{\chi_{\{u>0\}}},
\]
we know $\{u_0^j\}_{j=1}^{\infty}$ is a sequence of minimizers of $J_0(u)$. Then Lemma \ref{nondegeneracy} and \eqref{gammakjto0} will show
\[
\lim_{j \to \infty}{L^n(\{u_0^j=0\}\cap B_1^n)}=0
\]
which leads to contradiction, since in Theorem 1.3 in \cite{C3} the authors prove that in the fractional cavitation problem, near every free boundary point, the zero set has positive density. 

\end{proof}
\section{Flatness to regularity preliminaries and Main Theorem}\label{maintheorem}
In the following sections we apply the method introduced in \cite{D1} by De Silva, Savin and Sire to prove  the regularity of free boundary given flatenss conditon when $0<\gamma<\gamma_0$ (Theorem \ref{main}). \\

\subsection{Preliminaries}
First we give definitions and preliminaries of viscosity solutions to the free boundary problem and discuss the half-plane solution.\\

A point $X\in \mathbb{R}^{n+1}$ will be denoted by $X=(x,y)\in \mathbb{R}^n\times \mathbb{R}$. We also use the notation $x=(x',x_n)\in  \mathbb{R}^{n-1}\times \mathbb{R}$. For a function $g$ defined in $ (B^{n+1}_1)^+=\{ X\in \mathbb{R}^{n+1}, |X|<1,y>0\}$, we denote $\Omega^+(g)=\{g(x,0)>0\}\cap B_1^n$ as the positive set in $\mathbb{R}^n$, and $F(g)=\partial_{\mathbb{R}^n}\Omega^+(g)\cap B_1^n$ as the free boundary. We denote $\mathcal G(u)=\partial \{u>0\}\cap \partial B_1^n  \subset \partial B_1^{n+1}$ which is the boundary of the set $\partial \{u>0\}\cap \partial B_1^n$ in $\partial B_1^{n+1}$.
We consider the free boundary problem
\begin{equation}\label{fbeq}
\begin{cases} 
     div(y^{\alpha}\nabla g)=0 \ \ in \ \ (B^{n+1}_1)^+ ,\\
    \frac{\partial g}{\partial U}=1 \ \ on \ \ F(g),\\
    \lim_{y\to 0^+}{y^{\alpha}\partial_yg(x,y)}=\gamma g^{\gamma-1}(x) \ \ in \ \ \Omega^+(g).
   \end{cases}
\end{equation}
Here we denote
\[
\frac{\partial g}{\partial U}(x)=\lim_{t \to 0^+}{\frac{g(x+t\nu(x),0)}{t^{\beta}}}, x \in F(g),
\]
and $\nu(x)$ is the unit normal to $F(g)$ at $x$ towards the positive set $\Omega^+(g)$, and $U$ is defined as the following.\\

Consider $U(t,z)$ as the extension of $(t)_{+}^{\beta}$ to upper half plane, which satisfies $U(t,0)=(t)_{+}^{\beta}$, and $div(z^{\alpha}\nabla U(t,z))=0$ in $\{t\in \mathbb{R},z>0\}$.\\
Write $U(t,z)=r^{\beta}g(\theta)$, $r=\sqrt{t^2+z^2}>0$, $t=r\cos{\theta}$, $z=r\sin{\theta}$, and $\theta\in [0,\pi]$. Then the equation for $g(\theta)\geq 0$ is
\[
g''(\theta)+\alpha \cot \theta g'(\theta)+\beta(\alpha+\beta)g(\theta)=0
\]
with $g(\pi)=0, g(0)=1$, and $g(\theta)=1+\gamma(\sin \theta)^{2s}+o((\sin \theta)^{2s})$. The last equation is derived from $\lim_{z \to 0}{z^{\alpha}\partial_zU(t,z)}=\gamma U^{\gamma-1}(t,0)$ when $t>0$. The $(n+1)$-dimensional function $U(X)=U(x_n,z)$ is a solution with free boundary $\{x_n=0\}$. \\

\subsection{Viscosity solutions}
We now introduce the definition of viscosity solutions to \eqref{fbeq}. 
\begin{definition}
Given $g,v$ continuous, we say that $v$ touches $g$ by below (resp. above) at $X_0\in B_1^{n+1}$ if $g(X_0)=v(X_0)$ and
\[
g(X)\geq v(X) \ \ \text{ (resp. } g(X)\leq v(X))\text{ in a neighborhood}  \ O   \ \text{of} \ X_0.
\]
If this inequality is strict in $O\setminus \{X_0\}$, we say that $v$ touches $g$ strictly by below (resp. above).
\end{definition}

\begin{definition}
We say $v\in C((B_1^{n+1})^+)$ is a (strict) comparison subsolution to \eqref{fbeq} if $v$ is a non-negative function in $(B_1^{n+1})^+$ which is $C^2$ in the set where it is positive, and it satsfies
\begin{enumerate}[(i)]
\item  $div(y^{\alpha}\nabla v)\geq 0 \ \ in \ \ (B^{n+1}_1)^+$ .
\item  $F(v)$ is $C^2$ and if $x_0\in F(v)$ we have
\[
v(x,y)=aU((x-x_0)\cdot \nu(x_0),y)+o(|(x-x_0,y)|^{\beta}),\ \  \text{as} \ (x,y)\to (x_0,0),
\]
with $a\geq 1$, and $\nu(x_0)$ denotes the unit normal at $x_0$ to $F(v)$ towards the positive set $\Omega^+(v)$.\\
\item $ \lim_{y\to 0^+}{y^{\alpha}\partial_yv(x,y)}\geq \gamma v^{\gamma-1}(x) $.
\item Either $v$ satisfies (i) and (iii) strictly or $a>1$.
\end{enumerate}
Similarly one can define a comparison supersolution.
\end{definition}

\begin{definition}
We say that $g$ is a viscosity solution to \eqref{fbeq} if $g$ is a continuous non-negative function which satisfies
\begin{enumerate}[(i)]
\item  $g$ is locally $C^{1,1}$ in $(B_1^{n+1})^+$ and solves (in the viscosity sense)
\begin{equation*}
\begin{cases} 
div(y^{\alpha}\nabla g)= 0 \ \ in \ \ (B^{n+1}_1)^+ ,\\
\lim_{y\to 0^+}{y^{\alpha}\partial_g(x,y)}=\gamma g^{\gamma-1}(x,0) \ \ in \ \ \Omega^+(g).
\end{cases}
\end{equation*}
\item Any (strict) comparison subsolution (resp. supersolution) cannot touch $g$ by below (resp. by above) at a point $X_0=(x_0,0)\in F(g)$. 
\end{enumerate}
\end{definition}

\subsection{Comparison Principle}
We state the comparison principle for the problem \eqref{fbeq}. The proof is standard and can be found at Lemma 2.6 in \cite{D3}.\\
\begin{lemma}\label{comparison}
Let $g,v_t \in C(\overline{(B_1)^+})$ be respectively a solution and a family of subsolutions to \eqref{fbeq} with $0\leq t\leq 1$. Assume that
\begin{enumerate}[(i)]
\item $v_0\leq g$ in $\overline{(B_1)^+}$.
\item $v_t\leq g$ on $(\partial B_1^{n+1})^+$ for all $t\in [0,1]$.
\item $v_t<g$ on $\mathcal G(v_t)=\partial \{v_t>0\}\cap \partial B_1^n  \subset \partial B_1^{n+1}$. 
\item $v_t(x)$ is continuous in $(x,t)\in \overline{(B_1)^+}\times [0,1]$ and $\overline{\{v_t>0\}\cap B_1^n}$ is continunous in the Hausdorff metric.
\end{enumerate}
Then
\[
v_t \leq g \ \ \text{in} \ \ \overline{(B_1)^+} \ \  \text{for all} \ \ t \in [0,1]. 
\]
\end{lemma}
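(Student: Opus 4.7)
The plan is to run a standard sliding/continuity argument on the parameter $t$. Define
\[
T=\{t\in[0,1] : v_t\le g\text{ in }\overline{(B_1^{n+1})^+}\},
\]
which is nonempty by hypothesis (i); the goal is to show $T=[0,1]$ by proving it is both closed and open in $[0,1]$. Closedness is immediate from the joint continuity in (iv): if $t_k\to t$ with $v_{t_k}\le g$, passing to the limit gives $v_t\le g$. For openness, fix $t_0\in T$ and suppose by contradiction there exist $t_k\to t_0$ and $X_k\in\overline{(B_1^{n+1})^+}$ with $v_{t_k}(X_k)>g(X_k)$. By compactness extract $X_k\to X_0$; continuity and $v_{t_0}\le g$ then force $v_{t_0}(X_0)=g(X_0)$, so $v_{t_0}$ touches $g$ from below at $X_0$. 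The openness would then follow from a case analysis on the location of $X_0$, ruling out each possibility.

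If $X_0\in(\partial B_1^{n+1})^+$ or $X_0\in\mathcal{G}(v_{t_0})$, hypotheses (ii) and (iii) together with the Hausdorff continuity in (iv) prevent the touching: the strict inequality on $\mathcal{G}$ propagates to an open neighborhood, and the $X_k$ are forced into the interior. If $X_0\in(B_1^{n+1})^+\cap\{y>0\}$ sits in the common positivity set, both $v_{t_0}$ and $g$ satisfy $\operatorname{div}(y^\alpha\nabla\,\cdot\,)=0$ (with $v_{t_0}$ a strict subsolution), and the strong maximum principle for the $A_2$-weighted operator forces $v_{t_0}\equiv g$ on the connected component, contradicting the strict inequality inherited from some boundary piece. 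If $X_0\in B_1^n$ lies in $\Omega^+(g)\cap\Omega^+(v_{t_0})$, a Hopf-type lemma for the weighted operator, combined with the strict Neumann inequality $\lim_{y\to 0^+}y^\alpha\partial_y v_{t_0}>\gamma v_{t_0}^{\gamma-1}$, excludes tangential touching from below.

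The genuinely delicate case is $X_0\in F(g)\cup F(v_{t_0})$, where one must invoke the viscosity solution definition directly. Near its own free boundary the strict comparison subsolution $v_{t_0}$ has the asymptotic expansion $a\,U((x-x_0)\cdot\nu,y)+o(|(x-x_0,y)|^\beta)$ with $a\ge 1$, and by item (iv) of the subsolution definition either $a>1$ or the bulk/Neumann inequalities in (i),(iii) of the definition are strict. In any of these alternatives, $v_{t_0}$ is a strict comparison subsolution touching $g$ from below at a free boundary point, which is directly forbidden by the viscosity solution property of $g$. This yields the required contradiction and completes the openness step, hence $T=[0,1]$. The main obstacle is precisely this last case: one has to match the free boundary asymptotics of $v_{t_0}$ and $g$ so that $v_{t_0}$ genuinely qualifies as a strict comparison subsolution at the touching point $X_0$, which typically needs a small sliding perturbation of $v_{t_0}$ along $\nu$ (or a dilation in $a$) to produce a strict margin before applying the viscosity definition of $g$.
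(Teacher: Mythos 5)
The paper does not reproduce a proof here—it cites Lemma 2.6 of De Silva--Roquejoffre [D3]—and your sliding/continuity argument is precisely that standard approach; the structure (nonempty by (i), closed by the joint continuity in (iv), openness reduced to a no-touching analysis by location of the touching point $X_0$) is the right one. Two points in the openness step are not sound as written, though.

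First, your dismissal of the case $X_0 \in (\partial B_1^{n+1})^+$ is unjustified. Hypothesis (ii) gives only the non-strict inequality $v_t \le g$ on the upper-hemisphere part of the boundary, so it does not by itself forbid a touching point there, and the Hausdorff continuity in (iv) concerns $\overline{\{v_t>0\}\cap B_1^n}$ on $\{y=0\}$ and does not ``force the $X_k$ into the interior'' in a way that keeps the limit $X_0$ off $(\partial B_1^{n+1})^+$. The correct mechanism is to take $X_k$ to be a maximizer of $v_{t_k}-g$ over $\overline{(B_1^{n+1})^+}$; then (ii) forces $X_k\notin(\partial B_1^{n+1})^+$, and the bulk comparison (using that $v_{t_k}$ is a subsolution and $g$ a solution of $div(y^\alpha\nabla\cdot)=0$) together with the maximum principle excludes a positive interior maximum strictly inside $\{y>0\}$. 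Hence $X_k\in\overline{B_1^n}$, so $X_0\in\overline{B_1^n}$, and only then does your case analysis on $\Omega^+(v_{t_0})$, $F(v_{t_0})\cap F(g)$, and $\partial B_1^n$ (where $X_0\in\mathcal{G}(v_{t_0})$ and (iii) applies) cover everything.

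Second, you invoke strict bulk and thin inequalities for $v_{t_0}$, but item (iv) of the comparison-subsolution definition guarantees only that (i) and (iii) hold strictly \emph{or} $a>1$. In the $a>1$ alternative the bulk inequality is not strict, so ``the strong maximum principle for the $A_2$-weighted operator forces $v_{t_0}\equiv g$'' and the ``strict Neumann inequality'' are not available in the direct form you state; one must instead use the weak maximum principle to propagate a putative touching to the free boundary, where the $a>1$ alternative triggers item (ii) of the viscosity-solution definition of $g$. Relatedly, the sliding perturbation you suggest at $X_0\in F(g)$ is unnecessary: each $v_t$ is by hypothesis already a (strict) comparison subsolution, so part (ii) of the viscosity-solution definition directly forbids $v_{t_0}$ touching $g$ by below there, with no need to manufacture an extra margin.
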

Then as a consequence of the lemma, we introduced the comparison principle used in this paper.
\begin{corollary}\label{comparisonp2}
Let $g$ be a solution to \eqref{fbeq} and let $v$ be a subsolution to \eqref{fbeq} in $(B_2^{n+1})^+$
which is strictly monotone in the $e_n$-direction in the set $\{v>0\}\cap B_2^{n+1}\cap \{y\geq 0\}$. Call
\[
v_t(X)=v(X+te_n), X\in B_1^+.
\]
Assume that for $-1\leq t_0\leq t_1\leq 1$,
\[
v_{t_0}\leq g \ \ \text{in} \ \ \overline{ (B_1^{n+1})^+},
\]
and
\[
v_{t_1}\leq g \ \ \text{on} \ \ \partial (B_1^{n+1})^+,\ \  v_{t_1}< g \ \ \text{on} \ \ \mathcal{G}(v_{t_1}).
\]
Then
\[
v_{t_1}\leq g \ \ \text{in} \ \ \overline{ (B_1^{n+1})^+}.
\]
\end{corollary}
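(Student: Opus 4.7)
The plan is to deduce Corollary~\ref{comparisonp2} directly from Lemma~\ref{comparison} by applying it to the one-parameter family $\{v_t\}_{t\in [t_0,t_1]}$ (reparametrized to $s\in [0,1]$ via $s\mapsto v_{t_0+s(t_1-t_0)}$). Each translate $v_t$ is again a comparison subsolution to \eqref{fbeq} in $(B_1^{n+1})^+$ since the equation and the free-boundary condition are invariant under translations in the $e_n$-direction. The task reduces to verifying the four hypotheses (i)--(iv) of Lemma~\ref{comparison}, three of which are essentially immediate and one of which carries the real content.

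First, hypothesis (i) is exactly the assumption $v_{t_0}\le g$ in $\overline{(B_1^{n+1})^+}$. Hypothesis (iv), the joint continuity of $v_t(X)$ in $(X,t)$ and Hausdorff continuity of $\overline{\{v_t>0\}\cap B_1^n}$, is immediate from continuity of $v$ together with the fact that $\{v_t>0\}=\{v>0\}-te_n$ depends continuously on $t$. For hypothesis (ii), strict monotonicity of $v$ in the $e_n$-direction gives $v_t(X)\le v_{t_1}(X)$ pointwise whenever $t_0\le t\le t_1$, so on the upper hemisphere $(\partial B_1^{n+1})^+\subset \partial (B_1^{n+1})^+$ the bound $v_{t_1}\le g$ from the assumption upgrades to $v_t\le v_{t_1}\le g$ for every $t\in [t_0,t_1]$.

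The main step is hypothesis (iii), namely $v_t<g$ on $\mathcal G(v_t)$ for every $t\in [t_0,t_1]$. At $t=t_1$ this is exactly the third assumption of the corollary. For $t<t_1$, pick $X_0\in \mathcal G(v_t)\subset\partial B_1^n$. Then $X_0\in \partial\{v_t>0\}$, so by continuity $v_t(X_0)=0$. The strict monotonicity of $v$ in $e_n$ on $\{v>0\}\cap B_2^{n+1}\cap\{y\ge 0\}$ forces the positivity set $\{v>0\}$ to be epigraph-like in the $e_n$-direction, so a forward translate of a boundary point of $\{v>0\}$ lands in the interior of $\{v>0\}$; concretely, $X_0+t_1 e_n=(X_0+te_n)+(t_1-t)e_n$ lies in $\{v>0\}$, hence $v_{t_1}(X_0)>0$. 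Since $X_0\in \partial B_1^n\subset \partial (B_1^{n+1})^+$, the assumption $v_{t_1}\le g$ on $\partial(B_1^{n+1})^+$ now gives $g(X_0)\ge v_{t_1}(X_0)>0=v_t(X_0)$, which is the desired strict inequality.

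I expect the genuine content of the argument to sit in this last step: the other three hypotheses of Lemma~\ref{comparison} are essentially structural, but hypothesis (iii) is precisely where one has to trade the strict inequality at the single time $t_1$ for a strict inequality along the whole sliding family, and the only tool available to do so is the strict $e_n$-monotonicity of $v$ together with the hemispherical boundary bound on $v_{t_1}$. Once (i)--(iv) are all in hand, Lemma~\ref{comparison} delivers $v_t\le g$ in $\overline{(B_1^{n+1})^+}$ for every $t\in [t_0,t_1]$, and in particular for $t=t_1$, which is the conclusion of the corollary.
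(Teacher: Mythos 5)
Your proposal is correct and follows exactly the derivation the paper intends: the corollary is stated as a consequence of Lemma~\ref{comparison}, and you verify its four hypotheses for the sliding family $\{v_t\}_{t\in[t_0,t_1]}$, with the only substantive step being (iii), which you handle via the epigraph structure of $\{v>0\}$ forced by the strict $e_n$-monotonicity together with the boundary bound $v_{t_1}\le g$. This matches the standard argument (cf.\ the analogous corollary in \cite{D1}), so the proof is essentially the paper's own.
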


\subsection{Main Theorem}

\begin{theorem}[Main Theorem]\label{main}
There exists $\gamma_0>0$ such that for each $0<\gamma<\gamma_0$, there exists a universal constant $\bar{\epsilon}>0$, such that if $g$ is a viscosity solution to \eqref{fbeq} satisfying the flatness condition
\[
\{x\in B_1^n, x_n\leq -\bar{\epsilon}\}\subset \{x\in B_1^n, g(x,0)=0\}\subset \{x\in B_1^n, x_n\leq \bar{\epsilon}\},
\]
then $F(g)$ is $C^{1,\theta}$ in $B_{1/2}^n$, with $\theta>0$ depending on $n,s$ and $\gamma$.
\end{theorem}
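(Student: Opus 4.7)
The plan is to adapt the flatness-to-regularity scheme of De Silva--Savin--Sire \cite{D1} (originally devised for the fractional Bernoulli one-phase problem, formally $\gamma=0$) to our semilinear setting, exploiting that for $\gamma<\gamma_0$ the positive-density result of the previous section is available and the nonlinear boundary term $\gamma g^{\gamma-1}$ is perturbatively small. The argument will run in three stages: a partial Harnack inequality producing one discrete improvement of flatness, a compactness/linearization step identifying a limit problem, and a dyadic iteration yielding H\"older control of the unit normal to $F(g)$.

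For the partial Harnack step, I would assume that in $B_1^+$ the flatness condition refines to
\[
U(x_n-\epsilon,y)\;\leq\;g(x,y)\;\leq\;U(x_n+\epsilon,y),
\]
and construct one-sided perturbations of the half-plane solution of the form $v(X)=a\,U\bigl((x-x_0)\cdot\nu+\epsilon\psi(X),y\bigr)$, with $a>1$ slightly larger than $1$ and $\psi$ a radial cutoff centered inside the zero set. A direct check, using $\partial U/\partial U=1$ on the free boundary and that $U$ solves the extension equation, shows $v$ is an admissible strict subsolution to \eqref{fbeq}; sliding $v$ in the $e_n$-direction and applying Corollary \ref{comparisonp2} then forces $g$ to lie on one side of $U(x_n\pm\epsilon(1-c),y)$ in a smaller ball $B_\eta^+$, giving improvement of flatness by a universal factor $1-c$ at one scale. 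The positive density result is used to guarantee that the zero-set side has enough room for the barriers to propagate into the bulk.

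Iterating the Harnack estimate gives uniform H\"older control on the normalized deviation
\[
\tilde g_\epsilon(X)=\frac{g(X)-U(X)}{\epsilon}
\]
on $\Omega^+(U)\cap B_1^+$. A subsequential limit $\tilde g_0$ solves, via a Taylor expansion of the equation and boundary conditions in $g=U+\epsilon\tilde g_\epsilon$, a linear transmission problem with $\mathrm{div}(y^\alpha\nabla\tilde g_0)=0$ in the bulk, a vanishing weighted Neumann trace on $\{x_n>0,\,y=0\}$, and a linearized free-boundary matching condition at $\{x_n=0,\,y=0\}$. The $C^{1,\theta_0}$ regularity of this limiting problem (derived as in \cite{D1}) yields at the origin a linear approximation $\tilde g_0\approx\mu\cdot x'+O(|X|^{1+\theta_0})$, which converts back into an improvement of flatness in a tilted direction $\nu'$ close to $e_n$. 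A dyadic iteration of this step produces a geometric rate $|\nu_k-\nu_{k+1}|\leq C\rho^{k\theta}$, i.e.\ $C^{1,\theta}$ regularity of $F(g)$ in $B_{1/2}^n$.

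The main obstacle is the singular nonlinear Neumann term $\gamma g^{\gamma-1}$, which has no counterpart in \cite{D1}. In the partial Harnack step, inserting the perturbed barrier produces an extra boundary error of size $O(\gamma\epsilon^{\gamma-1})$ that must be absorbed into the strict subsolution gap obtained from $a>1$; in the linearization step, the Taylor expansion of $(U+\epsilon\tilde g_\epsilon)^{\gamma-1}$ generates a source of order $\gamma(\gamma-1)U^{\gamma-2}\tilde g_\epsilon$ on the boundary of the linearized problem, which must vanish uniformly as $\epsilon\to 0$. Both requirements force $\gamma$ to be small, determining the threshold $\gamma_0$ (possibly shrunk from the one in the previous section), and the resulting H\"older exponent $\theta$ will depend on $n$, $s$, and $\gamma$ through the quantitative gain in the linearized regularity and the rate at which the nonlinear source vanishes.
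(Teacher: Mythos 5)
Your overall scheme — partial Harnack, compactness to a linearized problem, dyadic iteration — is the right framework, and it is indeed the De Silva–Savin–Sire program that the paper follows. However, there is a genuine gap at the linearization step that the paper's approach is specifically designed to avoid.

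You define the normalized deviation as the pointwise quotient $\tilde g_\epsilon(X)=(g(X)-U(X))/\epsilon$ and then propose to Taylor-expand $(U+\epsilon\tilde g_\epsilon)^{\gamma-1}$ in the Neumann condition. As you yourself observe, this produces a boundary source of order $\gamma(\gamma-1)U^{\gamma-2}\tilde g_\epsilon$. But this source does \emph{not} vanish as $\epsilon\to0$: the $\epsilon$ that would kill it has already been absorbed into the normalization. What remains in the limit is a term $\gamma(\gamma-1)U^{\gamma-2}\tilde g_\infty$, and since $U(x_n,0)\sim x_n^\beta$ near the free boundary, $U^{\gamma-2}\sim x_n^{\beta(\gamma-2)}=x_n^{-2s}$, a singularity at $\{x_n=0\}$ whose strength is independent of $\gamma$ and cannot be tamed by shrinking $\gamma_0$ (that only reduces the prefactor $\gamma(\gamma-1)$). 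So your limiting ``transmission problem'' is not the clean one that the regularity theory of \cite{D1} applies to, and the step ``the $C^{1,\theta_0}$ regularity of this limiting problem (derived as in \cite{D1})'' is unjustified.

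The paper sidesteps this entirely by working with the \emph{$\epsilon$-domain variation}, defined implicitly by $U(X)=g(X-\epsilon\,\tilde g_\epsilon(X)e_n)$ rather than by a pointwise difference quotient. With that definition the test function in the linearization step is $\phi_k(Y)=U(X(Y))$; both $\phi_k$ at the touching point and $g_k$ satisfy the same nonlinear Neumann relation $\lim_{y\to0^+}y^\alpha\partial_y\cdot=\gamma(\cdot)^{\gamma-1}$, so the singular boundary terms cancel exactly, and the limiting problem (Lemma~\ref{ginfty}) carries the homogeneous Neumann condition $\lim_{y\to0^+}y^\alpha\partial_y w=0$ on $\{x_n>0\}$, with no singular source at all. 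This cancellation is the whole point of using domain variations for degenerate one-phase problems, and it is the ingredient missing from your proposal. Two secondary remarks: the paper's Harnack step does not use a multiplicative barrier $aU(\cdot+\epsilon\psi)$ but the carefully engineered radial subsolutions $v_R(X)=V_R(R-\sqrt{|x'|^2+(x_n-R)^2},z)$ with $V_R(t,z)=U(t,z)((n-1)t/R+1)$, whose subsolution property, free-boundary expansion, and nonlinear Neumann inequality are verified in Proposition~\ref{proptildev_R}; and both the initial flatness refinement (Lemma~\ref{flatness}) and the regularity of the linearized problem (Theorem~\ref{linearized}) are obtained by compactness to the $\gamma=0$ case, which is where the smallness $\gamma<\gamma_0$ enters — not through absorbing a singular source term.
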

\begin{lemma}\label{flatness}
Assume $g_{\gamma}$ solves \eqref{fbeq}, and $U_{\gamma}$ is the half-plane solution. There exists $\gamma_0>0$ such that for each $0<\gamma<\gamma_0$, given any $\epsilon>0$, there exists $\bar{\epsilon}>0$ and $\delta>0$ depending on $\epsilon$ such that if 
\[
\{x\in B_1^n, x_n\leq -\bar{\epsilon}\}\subset \{x\in B_1^n, g_{\gamma}(x,0)=0\}\subset \{x\in B_1^n, x_n\leq \bar{\epsilon}\},
\]
then the rescaling $\delta^{-\beta}g_{\gamma}(\delta X)$ satisfies
\[
U_{\gamma}(X-\epsilon e_n)\leq \delta^{-\beta}g_{\gamma}(\delta X) \leq U_{\gamma}(X+\epsilon e_n) \ \ \text{in} \ \ B_1^n.
\]
\end{lemma}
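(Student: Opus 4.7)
The approach is a compactness-and-contradiction argument in the style of \cite{D1}. Fix $\gamma\in(0,\gamma_0)$ and $\epsilon>0$, and assume the conclusion fails. Then there exist $\bar\epsilon_k\to 0$ and viscosity solutions $g_k$ of \eqref{fbeq} with
\[
\{x_n\le-\bar\epsilon_k\}\cap B_1^n\subset\{g_k(\cdot,0)=0\}\subset\{x_n\le\bar\epsilon_k\}\cap B_1^n,
\]
such that no $\delta>0$ makes $\delta^{-\beta}g_k(\delta X)$ sandwiched between $U_\gamma(X\pm\epsilon e_n)$ on $B_1^n$. The first task is to extract a uniform limit. The $C^\beta$ regularity from \cite{Y1}, together with a uniform $L^\infty$ bound obtained from the outer trapping and the Harnack inequality for $\operatorname{div}(y^\alpha\nabla\,\cdot\,)$, makes $\{g_k\}$ pre-compact in $C^0_{\mathrm{loc}}(\overline{(B_1^{n+1})^+})$; extract a locally uniform limit $g_\infty\ge 0$. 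The outer trapping forces $g_\infty(\cdot,0)\equiv 0$ on $\{x_n\le 0\}\cap B_1^n$.

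A viscosity-level analogue of Lemma \ref{nondegeneracy} (available because $\gamma<\gamma_0$, obtained by touching $g_k$ from below with suitably rescaled copies of $U_\gamma$ and applying Corollary \ref{comparisonp2}) yields a uniform lower bound $g_\infty(x_0,0)\ge c\,\mathrm{dist}(x_0,F(g_\infty))^\beta$, which rules out flat pieces of the positivity set on $\{x_n>0\}$; hence $F(g_\infty)=\{x_n=0\}\cap B_1^n$, and $F(g_k)\to\{x_n=0\}$ in the Hausdorff metric. Passing the viscosity solution property through the limit in the standard way (any strict comparison subsolution touching $g_\infty$ from below at an interior free boundary point would, by Hausdorff convergence of the free boundaries, touch $g_k$ for large $k$, violating the viscosity property of $g_k$) shows that $g_\infty$ is itself a viscosity solution of \eqref{fbeq} with flat free boundary. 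The uniqueness of the half-plane solution from Section~4, applied at the viscosity level using the normalization $\partial g/\partial U=1$, then forces $g_\infty\equiv U_\gamma$ on $(B_1^{n+1})^+$.

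To close the loop, use that $U_\gamma$ is strictly monotone in the $e_n$-direction on $\{U_\gamma>0\}\cap\{y\ge 0\}$ with quantitative modulus obtained from the angular profile ODE for $g(\theta)$: there exists $\eta=\eta(\epsilon)>0$ such that any continuous $h\ge 0$ with $\|h-U_\gamma\|_{L^\infty(B_{1/2})}<\eta$ automatically satisfies $U_\gamma(X-\epsilon e_n)\le h(X)\le U_\gamma(X+\epsilon e_n)$ on $B_{1/4}$. Setting $\delta=1/4$ and invoking uniform convergence $g_k\to U_\gamma$ produces for large $k$ exactly the sandwich bound that the sequence was assumed to violate, which is the desired contradiction.

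The main obstacle is the identification $g_\infty\equiv U_\gamma$. Two ingredients must be secured: first, Hausdorff convergence of the free boundaries in the viscosity setting, which requires a non-degeneracy estimate uniform in the sequence and is precisely where the smallness $\gamma<\gamma_0$ enters (the energy-level estimate of Lemma \ref{nondegeneracy} must be propagated to viscosity solutions through comparison with the universal barrier $U_\gamma$); second, the quantitative passage from sup-norm closeness to the horizontal sandwich, which relies on strict $e_n$-monotonicity and the $\beta$-homogeneous asymptotics of $U_\gamma$ controlled via the ODE for $g(\theta)$ derived in Section~\ref{maintheorem}.
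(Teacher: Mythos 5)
Your proposal takes a genuinely different route from the paper.  The paper fixes nothing and runs a compactness argument in $\gamma_k\to 0$: if the lemma failed along a sequence $\gamma_k\to 0$, one passes to the limit and transfers the flatness/failure to the cavitation case $\gamma=0$, contradicting Lemma~2.10 of De Silva--Savin--Sire (whose conclusion is exactly the statement to be proved).  You instead fix $\gamma$ and $\epsilon$, run compactness over $\bar\epsilon_k\to 0$, and try to identify the limit $g_\infty$ with $U_\gamma$ directly.  That is an interesting idea, but as written it has a gap at the identification step.

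The problem is the claim ``$g_\infty\equiv U_\gamma$ on $(B_1^{n+1})^+$.''  The uniqueness theorem of Section~4 applies only to global minimizers in $(\mathbb{R}^{n+1})^+$ whose trace on $\{y=0\}$ is already assumed to be $A(x_n)_+^\beta$; it pins down the constant $A$, not the shape of the trace.  Your $g_\infty$ is a local viscosity solution on $(B_1^{n+1})^+$, and knowing $F(g_\infty)=\{x_n=0\}\cap B_1^n$ together with the normalization $\partial g_\infty/\partial U=1$ only fixes the $\beta$-order asymptotics of $g_\infty$ along the free boundary, not $g_\infty$ on the whole half-ball.  A local solution with flat free boundary can differ from $U_\gamma$ by a nontrivial function which is $\mathrm{div}(y^\alpha\nabla\,\cdot)$-harmonic in $(B_1^{n+1})^+$, vanishes on $\{y=0,\ x_n\le 0\}$, and has vanishing fractional normal derivative on $\{y=0,\ x_n>0\}$; nothing in Sections~3--6 rules this out.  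What is true is that the blow-up of $g_\infty$ at a free boundary point is $U_\gamma$ (Sections~3--5), which is exactly why the lemma asks for a rescaling $\delta^{-\beta}g(\delta X)$ with $\delta$ depending on $\epsilon$; your choice $\delta=1/4$ fixed cannot work in general.  If you insert the blow-up and a diagonal argument you can repair this, but then you are essentially re-proving the improvement-of-flatness machinery of Sections~7--11 at fixed $\gamma$, which is circular at this stage of the paper; the paper's reduction to $\gamma=0$ is the short path precisely because it imports that machinery already proved for the cavitation problem.  A secondary soft spot is that you invoke Lemma~\ref{nondegeneracy}, which is proved for energy minimizers, to control Hausdorff convergence of free boundaries of viscosity solutions; a viscosity-level non-degeneracy would need its own proof.
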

\begin{proof}[Proof of Lemma \ref{flatness}]
We use the method of compactness since this lemma for case $\gamma=0$ is proved in Lemma 2.10 in \cite{D1}. Assume that there exists $\gamma_k \to 0$ such that the lemma does not hold for each $\gamma_k$. Then for each $\gamma_k$, there exists a sequence $\{g_{\gamma_k}^j\}_{j=1}^{\infty}$, $g_{\gamma_k}^j$ are solutions of \eqref{fbeq} with $\gamma=\gamma_k$, and a sequence $\{\epsilon_k^j\}_{j=1}^{\infty}$ with $\epsilon_k^j \to 0$ as $j \to \infty$ for each $k$, such that $g_{\gamma_k}^j$ satisfies the following condition with $\bar{\epsilon}_k^j \to 0$ as $j \to \infty$, 
\[
\{x\in B_1^n, x_n\leq -\bar{\epsilon}_k^j\}\subset \{x\in B_1^n, g_{\gamma_k}^j(x,0)=0\}\subset \{x\in B_1^n, x_n\leq \bar{\epsilon}_k^j\},
\]
but the conclusion does not hold for $\delta_k^j \to 0$ as $j \to \infty$.

Let $g_0^j=\lim_{\gamma_k\to 0}{g_{\gamma_k}^j}$, the limit exists since in \cite{Y1} the optimal $C^{\beta}$ estimates for the minimimzers are given, with $\beta=\frac{2s}{2-\gamma}>s$ and the $C^{\beta}$ norm does not blow-up as $\gamma \to 0$. And let $e_0^j=\lim_{k \to \infty}{e_k^j} \to 0$ as $j \to \infty$. The limit $U_0(X)=\lim_{\gamma_k\to 0}{U_{\gamma_k}}$ is the half-plane solution for the one-phase cavitation problem. In addition, we proved in Lemma \ref{nondegeneracy} that the minimizers are uniformly non-degenerate as $\gamma \to 0$. Then $\{u_0^j\}_{j=1}^{\infty}$ are the solutions of the case $\gamma=0$, and satisfy the flatness assumption with width $\sup_{k}{\bar{\epsilon}_{\gamma_k}^j} \to 0$ as $j \to \infty$, but the conclusion does not hold, which leads to a contradiction. 
\end{proof}

So from now on we may assume that 
\[
U(X-\epsilon e_n)\leq g(X)\leq U(X+\epsilon e_n) \ \ \text{in} \ \ B_1^n.
\]

The proof of Theorem \ref{main} is organized as follows. In Section \ref{Linearzed problem} we recall the $\epsilon-$domain variation of the solutions and the associated linearized equations. In Section \ref{HarnackInequality} we give the proof of a Harnack inequality and then we improve the flatness in Section \ref{improvement}. And in Section \ref{linearizedregularity} the regularity of the solutions to the  linearized equations are proved and we finish our proof of the main Theorem in Section \ref{pfofmain}. In the Appendix, several useful inequalities of the half-plane solution $U(t,z)$ are given.

\section{Linearized problem}\label{Linearzed problem}
In this section we recall the $\epsilon-$domain variation of the solution to \eqref{fbeq} and state the associated linearized problem, which are introduced in \cite{D1}. 

\subsection{The $\epsilon$-domain variations}

Let $P=\{X \in \mathbb{R}^{n+1}, x_n\leq 0,y=0\}$ and $L=\{X \in \mathbb{R}^{n+1}, x_n= 0,y=0\}$. To each $X\in \mathbb{R}^{n+1}\cap\{y\geq 0\}\setminus P$ we associate a set $\tilde{g}_{\epsilon}(X) \subset \mathbb{R}$ such that
\[
U(X)=g(X-\epsilon w e_n), \ \  \forall w\in \tilde{g}_{\epsilon}(X).
\]
We call $\tilde{g}_{\epsilon}$ the $\epsilon$-domain variation associated to $g$. And from now on we write $\tilde{g}_{\epsilon}(X)$ to denote any of the calues in this set, by abuse of notation. We claim the following: if $g$ satisfies
\begin{equation}\label{U_g_U}
U(X-\epsilon e_n)\leq g(X)\leq U(X+\epsilon e_n) \ \ \text{in} \ \ B_{\rho}^{n+1}\cap \{y\geq 0\},
\end{equation}
then
\[
 \tilde{g}_{\epsilon}(X)\in[-1,1].
\]
To prove this, same as in \cite{D3}, we let
\[
Y=X- \tilde{g}_{\epsilon}(X)e_n, \ \ X\in \mathbb{R}^{n+1}\cap\{y\geq 0\}\setminus P,
\]
then we can see
\[
U(Y-\epsilon e_n)\leq g(Y)=U(Y+\tilde{g}_{\epsilon}(X)e_n)\leq U(Y+\epsilon e_n),
\]
by our definition $U(X)=g(X-\epsilon \tilde{g}_{\epsilon}(X) e_n)>0$ and $U$ is strictly monotone in $e_n$-direction outside of $P$. And by \eqref{U_g_U}, for each $X\in B_{\rho-\epsilon}^{n+1}\cap \{y\geq 0\} \setminus P$, the set $\tilde{g}_{\epsilon}(X)$ is non-empty and there exists at least one valye such that
\[
U(X)=g(X-\epsilon \tilde{g}_{\epsilon}(X) e_n).
\]
And our claim follows by the continuity of $g(X-\delta \epsilon e_n)$, for $\delta\in [-1,1]$.

Moreover, if $g$ is strictly monotone in the $e_n$-direction, the $\tilde{g}_{\epsilon}(X)$ is single-valued.

The following lemma will be useful to obtain a comparisan principle.

\begin{lemma}\label{evariationcomp}
Let $g,v$ be respectively a solution and a subsolution to \eqref{fbeq} in $(B_{2}^{n+1})^+$. Assume that $g$ satisfies the flatness condition \eqref{U_g_U} in $(B_{2}^{n+1})^+$, that $v$ is strictly increasing in the $e_n$-direction in $\{v>0\}\cap B_{\rho}^{n+1} \cap \{y\geq 0\}$, and that $\tilde{v}_{\epsilon}$ is defined on $B_{2-\epsilon}^{n+1}\cap \{y\geq 0\} \setminus P$ with
\[
|\tilde{v}_{\epsilon}|\leq C<\infty.
\]
If 
\[
\tilde{v}_{\epsilon}+c \leq \tilde{g}_{\epsilon} \ \ \text{in} \ \ B_{3/2}^{n+1}\setminus \overline{B_{1/2}^{n+1}}\cap \{y\geq 0\} \setminus P,
\]
then we have
\[
\tilde{v}_{\epsilon}+c \leq \tilde{g}_{\epsilon} \ \ \text{on} \ \ B_{3/2}^{n+1}\cap \{y\geq 0\} \setminus P.
\]
\end{lemma}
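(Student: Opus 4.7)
The plan is to translate the statement from an inequality on the $\epsilon$-domain variations into a pointwise inequality between the underlying functions, and then apply the sliding comparison principle (Corollary \ref{comparisonp2}).

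First, I would establish the fundamental identity relating an $e_n$-translate of the subsolution to a shift of its $\epsilon$-domain variation. Define $v_s(X) := v(X + s e_n)$. Using the characterizing equation $U(X) = v(X - \epsilon \tilde{v}_\epsilon(X) e_n)$ and the strict monotonicity of $v$ in the $e_n$-direction on its positive set, a direct check gives $\widetilde{(v_s)}_\epsilon(X) = \tilde{v}_\epsilon(X) + s/\epsilon$ wherever both sides are defined. Setting $s = \epsilon c$, the hypothesis $\tilde{v}_\epsilon + c \leq \tilde{g}_\epsilon$ on the annulus becomes $\widetilde{(v_{\epsilon c})}_\epsilon \leq \tilde{g}_\epsilon$ there.

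Next, I would prove the following pointwise equivalence: under the strict monotonicity of $v$ and the flatness \eqref{U_g_U} for $g$, one has $\widetilde{(v_s)}_\epsilon(X) \leq \tilde{g}_\epsilon(X)$ at every $X$ in a set $A$ if and only if $v_s \leq g$ on the physical image $\{\, X - \epsilon \tilde{g}_\epsilon(X) e_n : X \in A\,\}$. This comes from applying the monotonicity of $v_s$ and $g$ in $e_n$ to the identity $v_s(X - \epsilon \widetilde{(v_s)}_\epsilon e_n) = U(X) = g(X - \epsilon \tilde{g}_\epsilon e_n)$. Thus the annulus hypothesis translates into $v_{\epsilon c} \leq g$ on a corresponding annular region in the physical domain, which is contained in $\overline{(B_{3/2}^{n+1})^+} \setminus (B_{1/2}^{n+1})^+$ up to an $O(\epsilon)$ shift controlled by $|\tilde v_\epsilon|\le C$.

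Then I would run the sliding family of Corollary \ref{comparisonp2}. Choose $s_0$ sufficiently negative so that $v_{s_0}$ sits strictly below $g$ throughout the relevant ball; this is possible because the uniform bound $|\tilde{v}_\epsilon| \leq C$ places the positive set of $v_{s_0}$ well to the left of the positive set of $g$. Set $s_1 = \epsilon c$. The boundary inequality $v_{s_1} \leq g$ on $\partial (B_{3/2}^{n+1})^+$ and the strict inequality $v_{s_1} < g$ on $\mathcal{G}(v_{s_1})$ required by the corollary come from the annulus version of $\widetilde{(v_{s_1})}_\epsilon \leq \tilde{g}_\epsilon$ together with strict monotonicity of $v$ in $e_n$. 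Applying Corollary \ref{comparisonp2} (at the scale of $B_{3/2}^+$ after an obvious rescaling) yields $v_{\epsilon c} \leq g$ everywhere in $\overline{(B_{3/2}^{n+1})^+}$. Translating this pointwise inequality back through the equivalence produces $\tilde{v}_\epsilon + c \leq \tilde{g}_\epsilon$ throughout $B_{3/2}^{n+1} \cap \{y\geq 0\} \setminus P$, as required.

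The main obstacle I expect is bookkeeping near the free boundary of $v$ and $g$ and near the half-hyperplane $P$, where the $\epsilon$-domain variations are either not defined or multi-valued, and where the $O(\epsilon)$ mismatch between the $X$-annulus and its physical image $\Phi(A)$ has to be absorbed using $|\tilde{v}_\epsilon| \leq C$. Once this matching is done carefully, the sliding argument runs exactly as in Corollary \ref{comparisonp2}, with no new nonlinear analysis needed.
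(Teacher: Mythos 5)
Your proposal is correct and is essentially the argument the paper invokes: the paper simply cites Lemma 3.2 of De Silva--Roquejoffre, whose proof is exactly the translation you describe (rewrite the hypothesis on the $\epsilon$-domain variations as $v_{\epsilon c}\leq g$ on an annulus via monotonicity of $v$ and the identity $v_{\epsilon c}(X-\epsilon\widetilde{(v_{\epsilon c})}_\epsilon e_n)=U(X)=g(X-\epsilon\tilde g_\epsilon e_n)$, slide with Corollary~\ref{comparisonp2} starting from a far-left $v_{s_0}$ made admissible by $|\tilde v_\epsilon|\leq C$ and the flatness of $g$, then translate the resulting pointwise inequality back). The only small imprecision is attributing the reverse implication partly to monotonicity of $g$; in fact only the strict $e_n$-monotonicity of $v$ on its positive set is used there, with flatness of $g$ entering separately to guarantee $U(X)=g(X-\epsilon\tilde g_\epsilon e_n)>0$ off $P$.
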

The proof given in Lemma 3.2 in \cite{D3} is still valid since it only involves the comparison principle in Corollary \ref{comparisonp2} and the definition of $\tilde{g}_{\epsilon}$.\\

Given $\epsilon>0$ and a Lipschitz function $\tilde{\psi}$ defined on $B_{\rho}^{n+1}(Y)\cap \{y\geq 0\}$ with values in $[-1,1]$, there exists a unique function $\psi_{\epsilon}$ defined on  $B_{\rho-\epsilon}^{n+1}(Y)\cap \{y\geq 0\}$ such that
\[
U(X)=\psi_{\epsilon}(X-\epsilon \tilde{\psi}(X)e_n), X \in B_{\rho}^{n+1}(Y)\cap \{y\geq 0\}.
\]
And moreover $\psi_{\epsilon}$ is increasing in the $e_n$ direction. Thus, if $g$ satisfies the flatness condition \eqref{U_g_U} and $\tilde{\psi}$ is defined as above, then 
\[
\tilde{\psi}\leq \tilde{g}_{\epsilon} \ \ \text{in} \ \ B_{\rho}^{n+1}(Y)\cap \{y\geq 0\}\setminus P
\]
will lead to 
\begin{equation}\label{psiepsilon}
\psi_{\epsilon}\leq g  \ \ \text{in} \ \ B_{\rho-\epsilon}^{n+1}(Y)\cap \{y\geq 0\}.
\end{equation}

\subsection{The linearized problem}
We introduce here the linearized problem associated to \eqref{fbeq}. $U_n$ is the $x_n$-derivative of the funtion $U$. Given $w\in C((B_1^{n+1})^+)$ and $X_0=(x_0',0,0)$, we define
\[
|\nabla_rw|(X_0)=\lim_{(x_n,y)\to (0,0)}{\frac{w(x_0',x_n,y)-w(x_0',0,0)}{r}}, r^2=x_n^2+y^2.
\]
And the linearized problem associated to \eqref{fbeq} is
\begin{equation}\label{linearizedeq}
\begin{cases} 
div(y^{\alpha}\nabla(U_nw))= 0 \ \ in \ \ (B^{n+1}_1)^+ ,\\
|\nabla_rw|(X_0)=0 \ \  \text{on}\ \ B_1^n \cap L,\\
\lim_{y\to 0^+}{y^{\alpha}\partial_yw(x,y)}=0 \ \ \text{on} \ \ B_1^n\cap \{x_n>0\}.
\end{cases}
\end{equation}

The notion of the viscosity solution for this prolem is the following.

\begin{definition}
We say that $w$ is a solution to \eqref{linearizedeq} if $w \in C^{1,1}_{loc}( (B^{n+1}_1)^+)$ and it satisfies (in the viscosity sense)
\begin{enumerate}[(i)]
\item \begin{equation*}
\begin{cases} 
div(y^{\alpha}\nabla (U_nw))= 0 \ \ in \ \ (B^{n+1}_1)^+ ,\\
\lim_{y\to 0^+}{y^{\alpha}\partial_yw(x,y)}=0 \ \ \text{on} \ \  B_1^n\cap \{x_n>0\}.
\end{cases}
\end{equation*}
\item Let $\phi$ be continuous around $X_0=(x_0',0,0) \in B_1^n \cap L$ and satisfies
\[
\phi(X)=\phi(X_0)+a(X_0)\cdot(x'-x_0')+b(X_0)r+O(|x'-x_0'|^2+r^{1+\theta}), 
\]
for some $\theta>0$ and $b(X_0)\neq 0$.\\
If $b(X_0)>0$ then $\phi$ cannot touch $w$ by below at $X_0$, and if $b(X_0)<0$ then $\phi$ cannot touch $w$ by above at $X_0$.
\end{enumerate}
\end{definition}

\section{Harnack Inequality}\label{HarnackInequality}
In this section, we try to prove the following Harnack type inequality for solutions to the free boundary problem \eqref{fbeq}.

\begin{theorem}[Harnack Inequality]
\label{Harnack}
There exists $\bar{\epsilon}>0$ such that if $g$ solves \eqref{fbeq} and it satisfies
\[
U(X+\epsilon a_0 e_n)\leq g(X)\leq U(X+\epsilon b_0 e_n)\ \ \text{in} \ \ (B^{n+1}_{\rho}(X^*))^+,
\]
with $\epsilon(b_0-a_0)\leq \bar{\epsilon}\rho$, then
\[
U(X+\epsilon a_1 e_n)\leq g(X)\leq U(X+\epsilon b_1 e_n)\ \ \text{in} \ \ (B^{n+1}_{\eta\rho}(X^*))^+,
\]
with
\[
a_0\leq a_1\leq b_1\leq b_0, \ \ b_1-a_1\leq (1-\eta)(b_0-a_0),
\]
for a small universal constant $\eta$.
\end{theorem}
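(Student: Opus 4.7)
The plan is to follow the De~Silva--Savin--Sire scheme: a dichotomy at a fixed interior reference point, propagation of the resulting gain via the weighted Harnack inequality for the $A_2$-weight $y^{\alpha}$, and closure by a strict comparison subsolution built from $U$. By scaling invariance I may assume $\rho=1$ and $X^{*}=0$, and after translating in the $e_n$ direction I may take $a_0=0$, so the hypothesis reads $U(X)\le g(X)\le U(X+\epsilon\sigma e_n)$ on $(B_1^{n+1})^+$, where $\sigma=b_0-a_0$. The goal is to produce $\eta>0$ universal with either $U(X+\epsilon\eta\sigma e_n)\le g(X)$ or $g(X)\le U(X+\epsilon(1-\eta)\sigma e_n)$ on $(B_{\eta}^{n+1})^+$.

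Fix a reference point $\bar X\in(B_{1/2}^{n+1})^+$ at which $U$ and $U_n$ are bounded below by universal constants (e.g.\ $\bar X$ well inside the positivity set of $U$ and away from $\{y=0\}$). The dichotomy is
\[
g(\bar X)\ge\tfrac{1}{2}\bigl(U(\bar X)+U(\bar X+\epsilon\sigma e_n)\bigr)\quad\text{or not.}
\]
In the first case strict monotonicity of $U$ in $e_n$ forces $v(X):=g(X)-U(X)\ge c_0\epsilon\sigma$ at $\bar X$; the second case is symmetric, improving the upper bound. Since $v\ge 0$ satisfies $\operatorname{div}(y^{\alpha}\nabla v)=0$ in $\{g>0\}\cap(B_1^{n+1})^+$ and $y^{\alpha}\in A_2$, the standard weighted interior Harnack inequality, combined with the variant boundary Harnack from \cite{Y1} and the uniform non-degeneracy of Lemma \ref{nondegeneracy}, propagates this lower bound to $v(X)\ge c_1\epsilon\sigma\,U_n(X)$ on a full neighborhood of $F(g)$ inside $B_{\eta_0}^{n+1}$.

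I then convert this pointwise gain into a free-boundary shift by constructing a strict comparison subsolution of the form
\[
\psi(X)=U\bigl(X+\epsilon\tau\phi(X)e_n\bigr),\qquad\tau=c_2\sigma,
\]
where $\phi\ge 0$ is a smooth bump on $B_{1/2}^{n+1}$ equal to a fixed positive constant on $B_{\eta}^{n+1}$ and vanishing on $\partial B_{1/2}^{n+1}$. Expanding $\psi$ in the small parameter $\epsilon\tau$ and using $\operatorname{div}(y^{\alpha}\nabla U)=0$ and the boundary condition for $U$ shows that $\psi$ is a strict subsolution of \eqref{fbeq} provided $\phi$ is chosen so that the leading correction $U_n\operatorname{div}(y^{\alpha}\nabla\phi)$ is strictly positive up to lower-order errors. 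The previous Harnack step supplies the inequality $\tilde g_{\epsilon}\ge\tilde\psi_{\epsilon}+c'$ on the annulus $(B_{1/2}^{n+1}\setminus\overline{B_{\eta/2}^{n+1}})\cap\{y\ge 0\}\setminus P$, so Lemma \ref{evariationcomp} extends this to $B_{\eta}^{n+1}$ and yields $g(X)\ge U(X+\epsilon a_1 e_n)$ with $a_1=\eta\sigma$.

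The main obstacle is verifying the mixed boundary condition $\lim_{y\to 0^+}y^{\alpha}\partial_y\psi\ge\gamma\psi^{\gamma-1}$ in the subsolution step: because the right-hand side is singular as $\psi\to 0^+$, a naive $e_n$-perturbation of $U$ produces a boundary error of order $(\epsilon\tau)^{\gamma-1}$ that is not automatically dominated by the strict interior margin. This is exactly where $\gamma<\gamma_0$ from the previous section and the uniform non-degeneracy of Lemma \ref{nondegeneracy} become essential, allowing the $\gamma$-dependent error to be absorbed into the strictly positive interior contribution so that the argument reduces to the cavitation case of \cite{D1}.
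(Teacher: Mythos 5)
Your overall framework (dichotomy at an interior reference point, propagate the gain by weighted Harnack, close with a comparison subsolution) mirrors the De~Silva--Savin--Sire scheme and the paper's proof. The decisive difference, and the gap, is the choice of comparison subsolution. You propose a domain deformation
$\psi(X)=U\bigl(X+\epsilon\tau\phi(X)e_n\bigr)$ with a smooth bump $\phi$; the paper instead slides the radial family $v_R$ built by rotating $V_R(t,z)=U(t,z)\bigl(1+\tfrac{(n-1)t}{R}\bigr)$ about the point $Re_n$. The multiplicative factor is the whole point: for $V_R$, the mixed boundary condition is verified \emph{exactly}, since on $\{v_R>0\}$
\[
\lim_{z\to 0^+}z^{\alpha}\partial_z v_R=\Bigl(1+\tfrac{(n-1)t}{R}\Bigr)\gamma U^{\gamma-1}
=\Bigl(1+\tfrac{(n-1)t}{R}\Bigr)^{2-\gamma}\gamma v_R^{\gamma-1}\ \ge\ \gamma v_R^{\gamma-1},
\]
with no error at all, and the expansion at $F(v_R)$ gives the coefficient $a=1$, so the subsolution definition is satisfied via strictness in the interior equation. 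Your $\psi$ has no such structure; as you yourself flag, the Neumann computation for $\psi$ produces correction terms on $\{y=0\}$ that are \emph{more} singular at the free boundary than $\gamma\psi^{\gamma-1}$ (they involve $\lim_{y\to 0^+}y^{\alpha}\partial_y U_n$, which scales like $x_n^{\beta(\gamma-1)-1}$ versus $x_n^{\beta(\gamma-1)}$ for the main term), so the inequality $\lim y^{\alpha}\partial_y\psi\ge\gamma\psi^{\gamma-1}$ is not controlled by the strict interior margin, and invoking $\gamma<\gamma_0$ plus non-degeneracy does not repair this — those facts control $\epsilon$ and $u$ away from the free boundary, not a blow-up rate at it. Naming the obstacle is not the same as resolving it; as written the proof does not go through.

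Two further inaccuracies worth flagging. First, your intermediate claim that Harnack propagates the pointwise gain to $g-U\ge c_1\epsilon\sigma\,U_n$ is stronger than what actually holds near $F(g)$ (since $U_n/U\to\infty$ as $x_n\to 0^+$); the paper's Lemma~\ref{compgU} proves the correct form $g\ge(1+c\delta_0)U$, and even that step needs a nontrivial modification here: the paper cannot invoke a boundary Harnack for $U$ directly, and instead compares with the $s$-homogeneous barrier $V$ and proves $V\ge CU$ by an ODE analysis of the angular profile. Second, you route the closure through Lemma~\ref{evariationcomp}, whereas the paper uses the sliding comparison principle Corollary~\ref{comparisonp2} together with the explicit estimates \eqref{cor1}--\eqref{cor3} for $v_R$; either can work once one has a valid subsolution, but you should be aware the paper's choice is tied to the geometry of the $v_R$ family.
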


Let $g$ be a solution to \eqref{fbeq} which satisfies
\[
U(X-\epsilon e_n)\leq g(X)\leq U(X+\epsilon e_n) \ \ \text{in} \ \ (B_1^{n+1})^+.
\]
Let $A_{\epsilon}$ be the set
\[
A_{\epsilon}=\{(X,\tilde{g}_{\epsilon}(X)):X\in (B_{1-\epsilon}^{n+1})^+\} \subset \mathbb{R}^{n+1}\times[a_0,b_0].
\]
Since $\tilde{g}_{\epsilon}$ may be multi-valued, we mean all pairs $(X,\tilde{g}_{\epsilon}(X))$ for all possible values of $\tilde{g}_{\epsilon}$. An iterative argumemnt will give the following corollary of Theorem \ref{Harnack}.

\begin{corollary}\label{harnackcor}
If 
\[
U(X-\epsilon e_n)\leq g(X)\leq U(X+\epsilon e_n) \ \ \text{in} \ \ (B_1^{n+1})^+.
\]
with $\epsilon\leq \bar{\epsilon}/2$, given $m_0>0$ such that
\[
2\epsilon(1-\eta)^{m_0}\eta^{-m_0}\leq \bar{\epsilon},
\]
then the set $A_{\epsilon}\cap((B_{1/2}^{n+1})^+\times[-1,1])$ is above the graph of a function $y=a_{\epsilon}(X)$ and is below the graph of a function $y=b_{\epsilon}(X)$ with
\[
b_{\epsilon}-a_{\epsilon}\leq 2(1-\eta)^{m_0-1},
\]
and $a_{\epsilon},b_{\epsilon}$ having a modulus of continuity bounded by the H\"{o}lder function $At^{B}$ with $A,B$ depending only on $\eta$.
\end{corollary}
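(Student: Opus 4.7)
The plan is to iterate Theorem~\ref{Harnack} on a geometric sequence of scales. The hypothesis reads as the Harnack hypothesis with $a_0=-1$, $b_0=1$, $\rho=1$, and the standing smallness $\epsilon\leq\bar\epsilon/2$ gives $\epsilon(b_0-a_0)=2\epsilon\leq\bar\epsilon$. Thus, for every $X^\ast\in(B_{1/2}^{n+1})^+$, one application of Theorem~\ref{Harnack} centered at $X^\ast$ produces $a_1^{X^\ast}\leq b_1^{X^\ast}$ with $b_1^{X^\ast}-a_1^{X^\ast}\leq 2(1-\eta)$ and
\[
U(X+\epsilon a_1^{X^\ast}e_n)\leq g(X)\leq U(X+\epsilon b_1^{X^\ast}e_n)\quad\text{in }(B_{\eta}^{n+1}(X^\ast))^+.
\]
Inductively, after $k$ applications one obtains trapping constants $a_k^{X^\ast}\leq b_k^{X^\ast}$ valid in $(B_{\eta^k}^{n+1}(X^\ast))^+$ with $b_k^{X^\ast}-a_k^{X^\ast}\leq 2(1-\eta)^k$; each successive step is legitimate iff $2\epsilon(1-\eta)^k\leq\bar\epsilon\eta^k$, and the definition of $m_0$ guarantees this through $k=m_0-1$. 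Consequently every value of $\tilde g_\epsilon$ on $(B_{\eta^{m_0-1}}^{n+1}(X^\ast))^+$ lies in an interval of length at most $2(1-\eta)^{m_0-1}$.

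I then define the envelopes
\[
a_\epsilon(X)=\sup\{a\in\R:\ \tilde g_\epsilon\geq a \text{ on some }(B_r^{n+1}(X))^+\},\quad b_\epsilon(X)=\inf\{b\in\R:\ \tilde g_\epsilon\leq b \text{ on some }(B_r^{n+1}(X))^+\}.
\]
Applying the iterated trapping at each $X^\ast$ with $r=\eta^{m_0-1}$ shows $a_\epsilon(X^\ast)\leq\tilde g_\epsilon(X^\ast)\leq b_\epsilon(X^\ast)$ and $b_\epsilon(X^\ast)-a_\epsilon(X^\ast)\leq 2(1-\eta)^{m_0-1}$ throughout $(B_{1/2}^{n+1})^+$, so $A_\epsilon\cap\bigl((B_{1/2}^{n+1})^+\times[-1,1]\bigr)$ lies between the graphs of $a_\epsilon$ and $b_\epsilon$, as required.

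For the H\"older estimate: given $X,Y\in(B_{1/2}^{n+1})^+$ with $t=|X-Y|$, pick the largest $k\in\{0,\ldots,m_0-1\}$ with $\eta^k\geq t$. Then $Y\in(B_{\eta^k}^{n+1}(X))^+$, and the $k$-th iterated trapping centered at $X$ forces both $a_\epsilon(X),a_\epsilon(Y)\in[a_k^X,b_k^X]$, and similarly for $b_\epsilon$. Hence
\[
|a_\epsilon(X)-a_\epsilon(Y)|+|b_\epsilon(X)-b_\epsilon(Y)|\leq 4(1-\eta)^k=4(\eta^k)^B\leq A t^B,
\]
with $B=\log(1-\eta)/\log\eta>0$ and $A=4\eta^{-B}$, both depending only on $\eta$. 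The main technical obstacle is handling the possible multi-valuedness of $\tilde g_\epsilon$: the two-sided bound on $g$ itself produced by Theorem~\ref{Harnack} is what allows each iterated interval $[a_k^{X^\ast},b_k^{X^\ast}]$ to enclose \emph{every} admissible value of $\tilde g_\epsilon$ on the corresponding ball simultaneously, so that the sup/inf envelopes above are well-defined and inherit the geometric oscillation decay regardless of whether $g$ is strictly monotone in $e_n$.
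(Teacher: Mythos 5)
The paper gives no proof of this corollary (it says only ``an iterative argument will give the following corollary''), and your fill-in is precisely the intended argument: iterate Theorem~\ref{Harnack} at geometric scales centered at each $X^*\in (B_{1/2}^{n+1})^+$, track the shrinking trapping intervals $[a_k^{X^*},b_k^{X^*}]$, define $a_\epsilon,b_\epsilon$ as lower/upper envelopes, and read off the H\"older modulus by comparing $|X-Y|$ to the scales $\eta^k$. The proof is correct. Two small points are worth making explicit. First, the claim that ``the definition of $m_0$ guarantees this through $k=m_0-1$'' relies on the monotonicity of $k\mapsto (1-\eta)^k\eta^{-k}$, which in turn uses that $\eta<\tfrac12$ (so $1-\eta>\eta$); since $\eta$ is a small universal constant this is fine, but it deserves a word. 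Second, the H\"older bound $|a_\epsilon(X)-a_\epsilon(Y)|\leq A|X-Y|^B$ you derive is only established for $|X-Y|\gtrsim\eta^{m_0-1}$; below that scale the iteration stops and you only control the oscillation by $2(1-\eta)^{m_0-1}$. This imprecision is inherited from the corollary's phrasing (which is copied from De Silva--Savin--Sire) and is harmless for the downstream use in Lemma~\ref{L2}, where $\epsilon\to 0$ forces $m_0\to\infty$, $\eta^{m_0-1}\to 0$, and the uniform estimate above that vanishing scale is all that is needed to extract a H\"older continuous Hausdorff limit. Your closing remark about multi-valuedness of $\tilde g_\epsilon$ is also the right observation: the two-sided trapping of $g$ itself is what makes the sup/inf envelopes well behaved.
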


The proof of Harnack inequality follows as in the case when $\gamma=0$ in \cite{D1}. The key ingredient is the lemma below.

\begin{lemma}\label{RadialSub}
There exists $\bar{\epsilon}>0$ such that for all $0<\epsilon<\bar{\epsilon}$, if $g$ is a solution to \eqref{fbeq} such that
\[
g(X)\geq U(X) \ \ \text{in} \ \ (B_{1/2}^{n+1})^+,
\]
and at $\bar{X}\in (B_{1/8}^{n+1}(\frac{1}{4}e_n))^+$
\begin{equation}\label{gU}
g(\bar{X})\geq U(\bar{X}+\epsilon e_n),
\end{equation}
then
\begin{equation}\label{gUtau}
g(X)\geq U(X+\tau \epsilon e_n) \ \ \text{in} \ \  (B_{\delta}^{n+1})^+
\end{equation}
for universal constants $\tau,\delta$. Similarly, if 
\[
g(X)\leq U(X) \ \ \text{in} \ \ (B_{1/2}^{n+1})^+,
\]
and
\[
g(\bar{X})\leq U(\bar{X}-\epsilon e_n),
\]
then
\[
g(X)\leq U(X-\tau \epsilon e_n) \ \ \text{in} \ \  (B_{\delta}^{n+1})^+.
\]
\end{lemma}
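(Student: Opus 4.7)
The plan is to follow the standard Harnack-type propagation argument of De Silva--Savin--Sire, adapted to the non-local Neumann boundary condition in \eqref{fbeq}. The statement has a natural two-step structure: first promote the single pointwise gap $g(\bar X) - U(\bar X) \gtrsim \epsilon$ to a uniform interior gap on a full ball sitting inside the positive set, via Harnack for the degenerate elliptic operator $\mathrm{div}(y^{\alpha}\nabla\cdot)$; then convert that interior gap into the sliding inequality \eqref{gUtau} near the free boundary by comparing $g$ with an explicit radial subsolution. We only discuss the first inequality; the second is symmetric.

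\textbf{Step 1 (interior Harnack).} Set $h := g - U$ in $(B_{1/2}^{n+1})^+$. By hypothesis $h \ge 0$, and since $g$ and $U$ each solve $\mathrm{div}(y^{\alpha}\nabla\cdot)=0$ in the interior of the upper half-ball, so does $h$. At the point $\bar X \in B_{1/8}^{n+1}(\tfrac14 e_n)$, which sits a definite distance from the free-boundary line $L=\{x_n=0,\,y=0\}$, one has $U_n(\bar X)\ge c_0 > 0$ from the explicit form $U=r^{\beta}g(\theta)$ with $g>0$ on $(0,\pi)$. Hence $h(\bar X)\ge U(\bar X+\epsilon e_n)-U(\bar X)\ge \tfrac{c_0}{2}\,\epsilon$ for $\epsilon\le\bar\epsilon$ small. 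Since $y^{\alpha}$ is an $A_2$ weight, the Fabes--Kenig--Serapioni Harnack inequality applied to $h$ yields
\[
h \;\ge\; c_1\,\epsilon \qquad \text{on some fixed ball } B := B_{1/8}^{n+1}(\tfrac14 e_n)\subset (B_{1/2}^{n+1})^+.
\]

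\textbf{Step 2 (radial subsolution).} Fix a small universal $\delta>0$ and a smooth radial cutoff $\varphi(X)$ with $\varphi\equiv 1$ in $B_{\delta}^{n+1}$, $\varphi \equiv 0$ outside an annular region $A$ whose outer boundary lies inside $B$, and $0\le\varphi\le 1$. For $\tau>0$ to be chosen universally small, define the candidate subsolution
\[
v(X) \;:=\; U\bigl(X + \tau\epsilon\,\varphi(X)\,e_n\bigr).
\]
On $\{\varphi\equiv 1\}$ the function $v$ is simply a translate of $U$, so it inherits from $U$ the Neumann relation $\lim_{y\to 0^+}y^{\alpha}\partial_y v = \gamma v^{\gamma-1}$, the equation $\mathrm{div}(y^{\alpha}\nabla v)=0$, and the free boundary condition $\partial v/\partial U=1$ on $F(v)$. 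On the transition region $\{0<\varphi<1\}$ — which lies in the interior, away from $F(v)$ — one checks by direct computation that
\[
\mathrm{div}(y^{\alpha}\nabla v) \;=\; y^{\alpha}\bigl[\,\tau\epsilon\,U_n\,\Delta\varphi + O(\tau^2\epsilon^2)\,\bigr] \;\ge\; 0
\]
by choosing $\varphi$ so that $\Delta\varphi\ge c>0$ on this annular shell (standard for a radial cutoff decreasing outward) and $\tau\epsilon$ small enough to absorb the quadratic error. Thus $v$ is a strict comparison subsolution to \eqref{fbeq}.

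\textbf{Step 3 (comparison).} On the outer boundary $\partial A$, where $\varphi\equiv 0$, we have $v=U\le g$. On the inner part of the transition, we use Step 1: $v\le U+C\tau\epsilon$ there, while $g\ge U+c_1\epsilon$ on $B\supset A$, so choosing $\tau\le c_1/(2C)$ ensures $v\le g$ on the full boundary of the region where $v\ne U$. Applying the sliding comparison principle of Corollary \ref{comparisonp2} to the family $v_t(X)=v(X+te_n)$ for $t\in[-\tau\epsilon,0]$ (each of which remains a strict subsolution by Step 2 and satisfies the required boundary ordering against $g$), we conclude $v\le g$ in $\overline{(B_{1/2}^{n+1})^+}$. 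Restricting to $B_{\delta}^{n+1}$, where $\varphi\equiv 1$, gives the claimed inequality $g(X)\ge U(X+\tau\epsilon e_n)$.

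The main obstacle is Step 2: verifying that the perturbation $v$ genuinely is a strict comparison subsolution of \eqref{fbeq}. The difficulty, absent in the pure cavitation case $\gamma=0$ treated in \cite{D1}, is the singular nonlinear Neumann condition $\lim y^{\alpha}\partial_y v = \gamma v^{\gamma-1}$: a priori a perturbation of $U$ disturbs both sides, and one must check the inequality goes the right way. The present construction sidesteps this by arranging $v\equiv U(\cdot+\tau\epsilon e_n)$ exactly on a neighbourhood of $F(v)$ (where $\varphi\equiv 1$), so that the Neumann condition is inherited from the translate of the exact solution $U$. The smallness condition $\gamma<\gamma_0$ will ultimately be needed through the uniform non-degeneracy of Lemma \ref{nondegeneracy} and through control on $U_{\gamma}$, $A=A(\gamma,s)$, when propagating constants through Steps 1--2 uniformly as $\gamma\to 0$.
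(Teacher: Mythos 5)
Your proposal has a genuine gap, and it traces back to the difference between an additive interior Harnack bound and the multiplicative gain $g\ge(1+c\epsilon)U$ that the argument actually requires.

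Your Step~1 converts the pointwise gap at $\bar X$ into $g-U\ge c_1\epsilon$ on an interior ball $B=B^{n+1}_{1/8}(\tfrac14 e_n)$. This is correct, but it is only an \emph{additive} bound, and it is confined to a region at fixed positive distance from the degenerate line $L=\{x_n=0,\,y=0\}$. Such a bound cannot be propagated to a neighbourhood of $L$, because $U$ vanishes there and so does $g-U$; the correct statement (the one the paper proves in Lemma~\ref{compgU} via a boundary Harnack argument against the auxiliary $s$-homogeneous barrier $V$) is the multiplicative one, $g\ge(1+c'\epsilon)U$ in $(B^{n+1}_{1/4})^+$. That multiplicative gain is precisely what lets the comparison with the family $v_R^t$ close up near the free boundary; your additive gain has no analogue there.

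Your Step~2 compounds this. Geometrically, the cutoff $\varphi$ cannot be $\equiv 1$ on $B^{n+1}_\delta$ around the origin and simultaneously have its support with outer boundary inside $B=B^{n+1}_{1/8}(\tfrac14 e_n)$, since $B$ does not contain the origin; the annular transition region around the origin necessarily meets the line $L$. Near $L$, the computation of $\mathrm{div}(y^\alpha\nabla v)$ produces not only the term $\tau\epsilon\,U_n\Delta\varphi$ you record but also $2\tau\epsilon\,y^\alpha\nabla\varphi\cdot\nabla U_n$ and $\alpha\,\tau\epsilon\,y^{\alpha-1}\partial_y\varphi\,U_n$ at the same order in $\tau\epsilon$. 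Since $U_n\sim r^{\beta-1}$ and $\nabla U_n\sim r^{\beta-2}$ with $r=\sqrt{x_n^2+y^2}$, the cross term $\nabla\varphi\cdot\nabla U_n$ dominates $\Delta\varphi\,U_n$ near $L$ and has no fixed sign, so you cannot enforce $\mathrm{div}(y^\alpha\nabla v)\ge 0$ in the transition region merely by choosing $\Delta\varphi\ge c>0$. Finally, Step~3 needs $v\le g$ on the full boundary of the transition region, which includes points near $L$; there the additive Harnack bound is unavailable and $v-U$ is of the same size as $g-U$.

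The paper's proof sidesteps all of this: Lemma~\ref{compgU} gives $g\ge(1+c'\epsilon)U$ up to the free boundary; the radial subsolution $v_R$ of \eqref{defv_R} (bending the flat profile with curvature $1/R$, with $R=C_0/(c'\epsilon)$) is verified to be a comparison subsolution globally including the Neumann condition; and Corollary~\ref{constant} supplies exactly the inequalities needed to run Corollary~\ref{comparisonp2} on the sliding family $v_R^t$. To repair your argument you would need to replace Step~1 by the multiplicative boundary-Harnack estimate and to replace the ad hoc cutoff by a genuine comparison subsolution of the type $v_R$; at that point you would be reproducing the paper's proof.
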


There is a preliminary lemma.

\begin{lemma}\label{compgU}
Let $g\geq 0$ be the $C^{1,1}_{loc}$ in $(B_2^{n+1})^+$ and solves 
\[
div(y^{\alpha}\nabla g)=0 \ \ \text{in} \ \ (B_2^{n+1})^+,
\]
and let $\bar{X}=\frac{3}{2}e_n$. Assume that
\[
g\geq U \ \ \text{in} \ \  (B_2^{n+1})^+, \ \ g(\bar{X})-U(\bar{X})\geq \delta_0
\]
for some $\delta_0>0$. Then
\[
g\geq (1+c\delta_0)U \ \ \text{in} \ \ (B_1^{n+1})^+,
\]
for a small universal constant $c$. In particular, for any $0<\epsilon<2$,
\[
U(X+\epsilon e_n)\geq (1+c\epsilon)U(X)  \ \ \text{in} \ \ (B_1^{n+1})^+.
\]
\end{lemma}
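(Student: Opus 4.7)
The plan is to work with the difference $h := g - U$. The hypothesis $g \geq U$ gives $h \geq 0$ on $(B_2^{n+1})^+$, and since both $g$ and $U$ solve $div(y^{\alpha}\nabla\, \cdot) = 0$ in $(B_2^{n+1})^+$, so does $h$. The hypothesis $g(\bar X) - U(\bar X) \geq \delta_0$ becomes $h(\bar X) \geq \delta_0$ at the distinguished point $\bar X = \tfrac{3}{2}e_n$.

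Next I would invoke the Harnack inequality for non-negative solutions of the $A_2$-weighted equation $div(y^{\alpha}\nabla\, \cdot) = 0$. Since $\alpha = 1-2s \in (-1,1)$, the weight $y^{\alpha}$ belongs to the Muckenhoupt $A_2$ class, so the Harnack inequality of Fabes--Kenig--Serapioni applies on balls compactly contained in $(B_2^{n+1})^+$, together with a boundary Harnack across the Neumann face $\{y=0\}$ from the Caffarelli--Silvestre framework \cite{C5}. A chain-of-balls argument joining $\bar X$ to an arbitrary point of $\overline{(B_1^{n+1})^+}$ produces a universal constant $c_0 = c_0(n,s) > 0$ with
\[
h \geq c_0 \delta_0 \quad \text{on } \overline{(B_1^{n+1})^+}.
\]
To pass from this absolute lower bound to a bound proportional to $U$, I would use that $U(X) = |X|^{\beta} g_0(\theta)$ with $g_0$ a bounded continuous profile on $[0,\pi]$, so $U \leq M$ on $\overline{(B_1^{n+1})^+}$ for a universal $M$. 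Therefore
\[
g - U \;=\; h \;\geq\; c_0 \delta_0 \;\geq\; (c_0\delta_0/M)\, U \quad \text{in } (B_1^{n+1})^+,
\]
which gives the desired $g \geq (1 + c\delta_0)\,U$ with $c := c_0/M$.

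For the ``in particular'' assertion, I would apply the main statement to $\tilde g(X) := U(X + \epsilon e_n)$: translation invariance of the equation and the strict monotonicity of $U$ in the $e_n$-direction (on $\{U>0\}$, i.e.\ off $P$) give all the hypotheses, and the direct computation $\tilde g(\bar X) - U(\bar X) = (\tfrac{3}{2}+\epsilon)^{\beta} - (\tfrac{3}{2})^{\beta} \geq c'\epsilon$ supplies $\delta_0 = c'\epsilon$, yielding $U(X+\epsilon e_n) \geq (1+c\epsilon)\,U(X)$. The step I expect to be the main obstacle is propagating the Harnack lower bound uniformly up to the degenerate face $\{y=0\}$: interior Harnack handles $\{y \geq \eta\}$ easily, but reaching $\{y=0\}$ calls for a boundary Harnack for $A_2$-weighted equations, whose clean statement implicitly depends on the Neumann behavior of $g$ on $\{y=0\}$ --- not explicitly postulated in the statement of the lemma but available in the intended applications (Lemma \ref{RadialSub}), where $g$ is a solution of the full free boundary problem \eqref{fbeq}.
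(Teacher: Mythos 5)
Your decomposition $h = g - U$ and the observation that $h \geq 0$ solves the weighted equation are fine as far as they go, but the key claim
\[
h \geq c_0\delta_0 \quad\text{uniformly on } \overline{(B_1^{n+1})^+}
\]
is false, and this is the step on which your whole argument rests. The difficulty is not merely the technical one you flag (propagating Harnack up to the degenerate face); it is that $h$ genuinely vanishes on a non-trivial part of the slit $P = \{x_n \leq 0,\, y = 0\}$. In the ``in particular'' application you take $g(X) = U(X + \epsilon e_n)$, and then $h(x',x_n,0) = (x_n+\epsilon)_+^{\beta} - (x_n)_+^{\beta} \equiv 0$ for $x_n \leq -\epsilon$; the same happens in the intended use inside Lemma~\ref{RadialSub}, where $g$ solves the free boundary problem and vanishes on a full set in $\{y=0\}$. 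No Harnack chain, interior or boundary, can overcome that: the correct rate at which $h$ decays toward the edge $L = \{x_n = 0, y = 0\}$ is governed by the slit geometry and is like $d^s$, not a constant. Correspondingly, the final inequality $c_0\delta_0 \geq (c_0\delta_0/M)U$ is sound arithmetic, but it is fed a false premise.

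The paper's proof resolves exactly this obstruction, and it is the part you are missing. One introduces an auxiliary function $g^*$ solving the same equation in the slit domain $D = B_{3/2}^{n+1}\setminus\{x_n<0, y=0\}$ with Dirichlet data $g^*-U = g-U$ on $\partial B_{3/2}^{n+1}$ and $g^* - U = 0$ on the slit; the Neumann condition $\lim_{y\to 0} y^{\alpha}\partial_y g = \gamma g^{\gamma-1} \leq \gamma U^{\gamma-1} = \lim_{y\to 0} y^{\alpha}\partial_y g^*$ (using $g\geq U$ and $\gamma - 1 < 0$) gives $g^* \leq g$ by maximum principle, so it suffices to bound $g^*$ from below. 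Then interior Harnack at $\bar X$ plus \emph{boundary Harnack in the slit domain} compare $g^*-U$ with the reference barrier $V$, the $\alpha$-harmonic extension of $(x_n)_+^s$, which vanishes on the slit at the right rate. This yields $g^*-U \geq C\delta_0\,V$, and the lemma then reduces to the purely angular inequality $V \geq c\,U$ in $(B_1^{n+1})^+$. That inequality is the genuine new content: $V = r^s h(\theta)$ and $U = r^{\beta}g(\theta)$ with $\beta > s$, both angular profiles vanishing at $\theta = \pi$, so $h(\theta)/g(\theta)$ must be bounded below near $\theta = \pi$; the paper establishes this by L'H\^opital together with the ODE identity $(g'(\theta)(\sin\theta)^{\alpha})' = -\beta(\alpha+\beta)(\sin\theta)^{\alpha}g(\theta)$ and the limit $g'(\theta)(\sin\theta)^{\alpha} \to \gamma$ as $\theta\to 0$. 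None of this appears in your proposal, and without it the lemma cannot be closed.

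Your treatment of the ``in particular'' part is otherwise reasonable once the main statement is in hand, and your instinct to use boundary Harnack is the right one; the fix is to compare $g^*-U$ against the slit barrier $V$ rather than a constant, and to prove the separate pointwise estimate $V\geq cU$ via the angular ODE.
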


The proof is slightly different since the boundary Harnack inequality of $U$ does not work. So instead we have the following proof.
\begin{proof}
We do an even extension of $U$ and $g$ with resepect to $\{y=0\}$, and let $g^*-U$ solves the following equation:
\begin{equation*}
\begin{cases} 
div(y^{\alpha}\nabla (g^*-U))= 0 \ \ \text{in} \ \ D=(B^{n+1}_{3/2})\setminus \{x_n<0,y=0\} ,\\
g^*-U=g-U\geq 0 \ \ \text{on} \ \ \partial B^{n+1}_{3/2},\\
g^*-U=0 \ \ \text{on} \ \  \{x_n<0,y=0\} .
\end{cases}
\end{equation*}
Then $g^*$ satisfies
\begin{equation*}
\begin{cases} 
div(y^{\alpha}\nabla g^*)= 0 \ \ \text{in} \ \ (B^{n+1}_{3/2})^+ ,\\
g^* \leq g \ \ \text{on} \ \ (\partial B^{n+1}_{3/2})^+,\\
g^*=0\leq g \ \ \text{on} \ \  \{x_n<0,y=0\},\\
\lim_{y \to 0}{ y^{\alpha}\partial_yg^*}\geq \lim_{y \to 0}{ y^{\alpha}\partial_yg} \ \ \text{on} \ \ \{x_n>0,y=0\}.
\end{cases}
\end{equation*}
The last inequality holds since
\[
\lim_{y \to 0}{ y^{\alpha}\partial_yg^*}=\lim_{y \to 0}{ y^{\alpha}\partial_yU}=\gamma U^{\gamma-1} \geq \gamma g^{\gamma-1}=\lim_{y \to 0}{ y^{\alpha}\partial_yg}.
\]
By maximum principle, $g^* \leq g$ in $(B^{n+1}_{3/2})^+$.
Let $\bar{X}=\frac{3}{2}e_n$, and $g(\bar{X})-U(\bar{X})\geq \delta_0$. Then since $g^*-U$ satisfies Harnack inequality,
\[
g^*-U=g-U\geq c_0\delta_0  \ \ \text{on} \ \ (\partial B^{n+1}_{3/2})^+\cap B_{1/4}^{n+1}(\bar{X}),
\]
and
\[
g^*(\tilde{X})- U(\tilde{X})\geq C_1\delta_0
\]
at some $\tilde{X}\in B_1^{n+1}\cap D$. Since $g^*-U$ satisfies boundary Harnack, 
\[
g^*(X)-U(X)\geq C_2\frac{g^*(\tilde{X})-U(\tilde{X})}{V(\tilde{X})}V(X) \ \ \text{in}\ \ (B_1^{n+1})^+.
\]
Here $V(X)$ solves
\[
div(y^{\alpha}\nabla V)=0 \ \ \text{in} \ \ D,
\]
and $V(X)=0$ on $\{x_n<0,y=0\}$. We can see $V(X)$ is the extension of $(x_n)_+^{s}$. Here we want to prove
\[
V(X)\geq C U(X)  \ \ \text{in} \ \ (B_1^{n+1})^+.
\]
We know
\begin{equation*}
\begin{cases} 
V(X)=|X|^sV(\frac{X}{|X|})=|X|^sh(\theta),\\
U(X)=|X|^{\beta}U(\frac{X}{|X|})=|X|^{\beta}g(\theta).
\end{cases}
\end{equation*}
and $\beta=\frac{2s}{2-\gamma}>s$. So we want to prove $\frac{h(\theta)}{g(\theta)}\geq C>0$ for $\theta \in [0,\pi]$. From Section 2.2 in \cite{D1}, $h(\theta)=(cos(\theta/2))^{2s}$. And from Section \ref{maintheorem}, $g(\theta)\geq 0$ solves the ODE
\begin{equation}\label{ODE}
g''(\theta)+\alpha \cot \theta g'(\theta)+\beta(\alpha+\beta)g(\theta)=0
\end{equation}
with $g(\pi)=0, g(0)=1$, and $g(\theta)=1+\gamma (sin\theta)^{2s}+o((sin\theta)^{2s})$ as $\theta \to 0$. So only problem occurs near $\theta=\pi$, where $h(\pi)=g(\pi)=0$. So
\begin{equation*}
\begin{aligned}
\lim_{\theta \to \pi}{\frac{h(\theta)}{g(\theta)}}&=\lim_{\theta \to \pi}{\frac{cos(\theta/2)^{2s}}{g(\theta)}}\\
&=\lim_{\theta \to \pi}{\frac{s(cos(\theta/2))^{2s-1}(-sin(\theta/2))}{g'(\theta)}}\\
&=\lim_{\theta \to \pi}{\frac{(-s)cos(\theta/2)^{2s-1}sin(\theta/2)sin(\theta)^{1-2s}}{g'(\theta)(sin\theta)^{\alpha}}}\\
&=\lim_{\theta \to \pi}{\frac{(-s)2^{1-2s}(sin(\theta/2))^{2-2s}}{g'(\theta)(sin\theta)^{\alpha}}}.
\end{aligned}
\end{equation*}
Our aim is to prove
\begin{equation}\label{g'sinalpha}
\begin{aligned}
\gamma \geq g'(\theta)(sin\theta)^{\alpha} \geq \gamma-C_0\beta(\alpha+\beta)\|g\|_{L^{\infty}}.
\end{aligned}
\end{equation}
Since
\begin{equation*}
\begin{aligned}
\lim_{\theta\to 0}{g'(\theta)(\sin{\theta})^{\alpha}}&=\lim_{\theta\to 0}{\frac{g(\theta)-g(0)}{\theta}(\sin{\theta})^{\alpha}}\\
&=\lim_{\theta\to 0}{\frac{\gamma \sin{\theta}^{2s}}{\theta}(\sin{\theta})^{\alpha}}\\
&=\gamma,
\end{aligned}
\end{equation*}
and $g$ solves the equation \eqref{ODE}, which is equivalent to
\begin{equation}\label{g'}
(g'(\theta)(sin\theta)^{\alpha})'=-\beta(\alpha+\beta)(\sin{\theta})^{\alpha}g(\theta),
\end{equation}
we can apply fundamental theorem of calculus and get
\[
g'(\theta)(\sin{\theta})^{\alpha}=\gamma-\beta(\alpha+\beta)\int_0^{\theta}{(\sin{\phi})^{\alpha}g(\phi)d\phi},
\]
so we need to prove $C_0=\int_0^1{(\sin{\theta})^{1-2s}d\theta}>0$ is a bounded number, which is ensured since $1-2s>-1$.
Now it is confirmed that
\[
\gamma \geq g'(\theta)(sin\theta)^{\alpha} \geq \gamma-C.
\]
So if $\tilde{C}\leq g'(\theta)(sin\theta)^{\alpha}\leq 0$ for some $\tilde{C}\leq 0$, then the limit will be a positive number (maybe positive infinity) and proof completed. If not, then it will contradicts our assumption that $\frac{h(\theta)}{g(\theta)}\geq 0$.\\
From above we prove that
\[
V(X)\geq C U(X)  \ \ \text{in} \ \ (B_1^{n+1})^+.
\]
Then the proof follows as
\[
g^*(X)-U(X)\geq C_2\frac{g^*(\tilde{X})-U(\tilde{X})}{V(\tilde{X})}V(X)\geq C\delta_0U(X) \ \ \text{in}\ \ (B_1^{n+1})^+,
\]
and
\[
g(X)\geq g^*(X)\geq (1+C\delta_0)U(X).
\]
\end{proof}

Proof of Lemma \ref{RadialSub} will use the following family of radial subsolutions. Let $R>0$ and denote
\[
V_R(t,z)=U(t,z)((n-1)\frac{t}{R}+1).
\]
Then set the (n+1)-dmensional function $v_R$ by rotating fuction $V_R$ around $(0,R,z)$.
\begin{equation}\label{defv_R}
v_R(X)=V_R(R-\sqrt{|x'|^2+(x_n-R)^2},z).
\end{equation}

\begin{proposition}\label{proptildev_R}
If $R$ is large enough, the function $v_R$ is a comparison subsolution to \eqref{fbeq} in $(B_2^{n+1})^+$ which is strictly monotone increasing in the $e_n$-direction. Moreover, there exists a function $\tilde{v}_R$ such that
\begin{equation}\label{tildev_R1}
U(X)=v_R(X-\tilde{v}_R(X)e_n) \ \ \text{in}\ \ (B_1^{n+1})^+
\end{equation}
and
\begin{equation}\label{tildev_R2}
|\tilde{v}_R(X)-\gamma_R(X)|\leq \frac{C}{R^2}|X|^2, \gamma_R(X)=-\frac{|x'|^2}{2R}+2(n-1)\frac{x_n r}{R},
\end{equation}
with $r=\sqrt{x_n^2+z^2}$ and $C$ universal.
\end{proposition}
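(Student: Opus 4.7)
The plan is to split the proposition into two pieces: verifying that $v_R$ is a strict comparison subsolution that is strictly monotone in $e_n$ on its positivity set, and then using that monotonicity to invert the relation $U(X)=v_R(X-\tilde{v}_R(X)e_n)$ and Taylor expand. I will write $v_R(X)=V_R(\tau(X),y)$ with $V_R(t,y)=U(t,y)m(t)$, $m(t)=1+(n-1)t/R$, and $\tau(X)=R-\rho(X)$, $\rho(X)=\sqrt{|x'|^2+(x_n-R)^2}$. Since $m$ depends only on $t$, the chain rule decouples the $y$-part from the rotation, which is what makes the computation tractable.

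For the interior subsolution property I would use that $U$ solves $\operatorname{div}(y^\alpha\nabla_{(t,y)}U)=0$, so in the $(\tau,y)$-variables only the extra terms coming from (a) the multiplier $m(t)$ and (b) the curvature of the level sets of $\tau$ survive. A direct computation gives
\[
\operatorname{div}(y^\alpha\nabla v_R)=y^\alpha\bigl[\,2U_t(\tau,y)\,m'(\tau)\,|\nabla_x\tau|^2+m(\tau)U_t(\tau,y)\,\Delta_x\tau+U(\tau,y)m'(\tau)\,\Delta_x\tau\,\bigr],
\]
with $|\nabla_x\tau|^2=1$, $\Delta_x\tau=-(n-1)/\rho$, and $m'=(n-1)/R$. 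Since $\rho=R+O(|X|)$ on $B_2^{n+1}$, the leading piece is $\tfrac{n-1}{R}y^\alpha(2U_t-m U_t-U/R)$, which is strictly positive for $R$ large because $U_t>0$ in $\{\tau>0,y\ge 0\}\setminus P$ and the last two terms are $O(1/R)$ corrections. The Neumann check reduces, after using $m$ is $y$-independent, to $m(\tau)\gamma U(\tau,0)^{\gamma-1}\ge \gamma(m(\tau)U(\tau,0))^{\gamma-1}$, i.e.\ $m(\tau)^{2-\gamma}\ge 1$, which holds with equality exactly at the free boundary. For the free boundary condition, $F(v_R)=\{\tau=0\}$ is a smooth sphere of radius $R$ centered at $Re_n$; near any $x_0\in F(v_R)$ we have $v_R(x,y)=U(\nu(x_0)\cdot(x-x_0),y)+o(|(x-x_0,y)|^\beta)$ because $m(0)=1$, so the asymptotic coefficient $a=1$ is attained but the interior inequality is strict, giving a strict comparison subsolution. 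Strict monotonicity in $e_n$ follows from $\partial_{x_n}v_R=\partial_tV_R\cdot (R-x_n)/\rho>0$ on $\{v_R>0\}\cap B_2^{n+1}$ for $R>2$.

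For the second part, strict monotonicity of $v_R$ in $e_n$ together with the range condition $v_R\to\infty$ (resp.\ $\to 0$) as the $e_n$-shift tends to $\pm\infty$ lets me define $\tilde{v}_R(X)$ as the unique scalar with $U(X)=v_R(X-\tilde{v}_R(X)e_n)$ for $X\in (B_1^{n+1})^+\setminus P$. Substituting $Y=X-\tilde{v}_R e_n$ into $\tau$ and using $\sqrt{1+u}=1+u/2-u^2/8+\cdots$ gives
\[
\tau(Y)=(x_n-\tilde{v}_R)-\frac{|x'|^2}{2R}+O\!\left(\frac{|X|^3}{R^2}\right),
\]
and expanding $U(\tau,y)(1+(n-1)\tau/R)=U(x_n,y)$ to first order in $1/R$ yields an implicit equation for $\tilde{v}_R$. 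Solving it using the Taylor expansion of $U$ in its first argument (which is valid on $\{x_n>0\}\cup\{y>0\}$) and using homogeneity of $U$ to control ratios of derivatives produces the explicit leading-order profile $\gamma_R(X)=-\tfrac{|x'|^2}{2R}+2(n-1)\tfrac{x_n r}{R}$, while the second-order remainder is of size $|X|^2/R^2$ with a universal constant coming from the uniform bounds on $U$ and its derivatives on the relevant regions.

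The main obstacle will be making the Taylor expansion in Step 3 uniform up to the free boundary $L=\{x_n=0,y=0\}$, where $U$ is only $C^\beta$ and its tangential derivative $U_t$ blows up. The way around it is to observe that $\tilde{v}_R$ is only defined on $(B_1^{n+1})^+\setminus P$, so one may perform the expansion using the homogeneity relation $tU_t+yU_y=\beta U$ and the explicit angular profile $g(\theta)$ in polar coordinates $(r,\theta)$, which produces scale-invariant quotients of the form $U/U_t$ that are controlled by $r$ and by a smooth function of $\theta$ away from $L$. This yields the desired $O(|X|^2/R^2)$ remainder with a constant depending only on $n,s,\gamma$, which is exactly what is needed later in Lemma \ref{evariationcomp} and the Harnack step.
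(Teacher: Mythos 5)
Your overall plan mirrors the paper's proof — verify that $v_R = V_R(\tau, y)$ with $V_R(t,y) = U(t,y)m(t)$ is a strict comparison subsolution, strictly monotone in $e_n$, and then invert the defining relation $U(X)=v_R(X-\tilde{v}_R(X)e_n)$ and Taylor-expand — and your interior identity, the Neumann reduction to $m(\tau)^{2-\gamma}\ge 1$, and the free-boundary asymptotics with $a=1$ are all correct. However, there is a genuine gap at the two places where the argument must be quantitative, and in both places the missing ingredient is the same: uniform estimates on the angular profile $g(\theta)$ that the paper develops in its Appendix.

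For the interior sign, your bookkeeping is wrong. You write the leading piece as $\frac{n-1}{R}y^\alpha(2U_t - mU_t - U/R)$ and call $-mU_t$ and $-U/R$ ``$O(1/R)$ corrections,'' but $-mU_t\approx -U_t$ is of the same order as $2U_t$, not subleading. After the correct cancellation the expression is $\approx\frac{n-1}{R}y^\alpha\bigl[(2-m)U_t-\frac{n-1}{R}U\bigr]$, and its positivity for all $\theta\in[0,\pi/2)$ hinges on the quantitative lower bound $r\,U_t/U\ge c>0$ uniformly on the positivity cone (the paper's inequality \eqref{U_t/U}, proved via an ODE analysis of $f(\theta)=\beta\cos\theta-g'(\theta)\sin\theta/g(\theta)$). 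Positivity of $U_t$ alone does not give this rate, because $U_t$ and $U$ degenerate together. For the construction of $\tilde{v}_R$, the Taylor-remainder estimate requires $|\partial_{tt}V_R(\tau,z)|\le C r^{-2}U(t,z)$, which the paper reduces to the uniform bounds $|U_t/U|\lesssim r^{-1}$, $|U_{tt}/U|\lesssim r^{-2}$, and the comparison $U(\tau/r,z/r)\le K\,U(t/r,z/r)$; these rest on the behavior of $g$ and $g'$ including at $\theta=\pi$, where $g$ vanishes and one needs facts like $\lim_{\theta\to\pi}g'(\theta)\sin\theta/g(\theta)=-2s$ together with boundedness of $g'(\theta)(\sin\theta)^{\alpha}$. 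Your remark that the quotients are ``controlled by $r$ and a smooth function of $\theta$ away from $L$'' misidentifies where the difficulty lies — the degeneration is along $P$ (i.e.\ $\theta=\pi$), not $L$, and the estimate must be uniform up to that set, which mere smoothness away from it does not give. Finally, the exact leading-order coefficient in $\gamma_R(X)$ is itself an output of the ratio $rU_t/U$; you assert the stated form without deriving it. In short, the skeleton of your argument matches the paper, but the profile estimates that carry the load are left unproved, and one of the claimed positivity arguments rests on a miscount of orders.
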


\begin{proof}
Step 1. In this part we prove that  $v_R$ is a comparison subsolution and is strictly monotone increasing in the $e_n$-direction.\\
First, we need to prove $v_R$ is a strict subsolution to 
\begin{equation}\label{v_R}
div(z^{\alpha}\nabla v_R)=0 \ \ \text{in}\ \ (B_2^{n+1})^+.
\end{equation}
We can compute that
\begin{equation*}
\begin{aligned}
\Delta v_R&+\frac{\alpha}{z}\partial_z v_R\\
&=\Delta_{t,z}V_R(R-\rho,z)-\frac{n-1}{\rho}\partial_tV_R(R-\rho,z)+\frac{\alpha}{z}\partial_z V_R(R-\rho,z),
\end{aligned}
\end{equation*}
where $\rho=\sqrt{|x'|^2+(x_n-R)^2}$. Then for $(t,z) \in (\mathbb{R}^2)^+$,
\begin{equation*}
\begin{aligned}
\Delta_{t,z} v_R(t,z)&+\frac{\alpha}{z}\partial_z V_R(t,z)\\
&=(\partial_{tt}+\partial_{zz}) V_R(t,z)+\frac{\alpha}{z}\partial_z V_R(t,z)\\
&=\frac{2(n-1)}{R}\partial_tU+\partial_{tt}U(\frac{t(n-1)}{R}+1)+\partial_{zz}U(\frac{t(n-1)}{R}+1)+\frac{\alpha}{z}\partial_zU(\frac{t(n-1)}{R}+1)\\
&=\frac{2(n-1)}{R}\partial_tU(t,z),
\end{aligned}
\end{equation*}
and
\begin{equation}\label{dtV_R}
\partial_tV_R(t,z)=\partial_tU(t,z)(\frac{t(n-1)}{R}+1)+\frac{n-1}{R}U(t,z).
\end{equation}
To prove $v_R$ is a subsolution to \eqref{v_R} in $(B_2^{n+1})^+$, we need to show that
\[
\frac{2(n-1)}{R}\partial_tU-\frac{n-1}{\rho}[(\frac{t(n-1)}{R}+1)\partial_tU+\frac{n-1}{R}U] \geq 0
\]
evaluated at $(R-\rho,z)$. Set $t=R-\rho$, the inequality is reduced to
\begin{equation}\label{largeR}
[2(R-t)-R-(n-1)t]\partial_tU-(n-1)U\geq 0.
\end{equation}
To prove this, an inequality for function $U$ is required as
\begin{equation}\label{U_t/U}
r\frac{\partial_tU(t,z)}{U(t,z)}\geq C>0,
\end{equation}
with $r^2=t^2+z^2$. The proof of \eqref{U_t/U} is given in Section \ref{U_tU} in the Appendix.\\
Then we can show when $R$ is large enough, the inequality \eqref{largeR} is satisfied.\\

Next we want to prove that $v_R$ satisfies the free boundary condition. First observe that
\[
F(v_R)=\partial B_R^n(Re_n)\cap B_2^n(0),
\]
then we want to show
\begin{equation}\label{v_Rexp}
v_R(x,z)=aU(x_n,z)+o(|(x,z)|^{\beta}) \ \ \text{as} \ \ (x,z)\to (0,0),
\end{equation}
with $a\geq 1$. By the H\"{o}lder continuity of $U$ with exponent $\beta$, we can see
\[
|V_R(t,z)-V_R(t_0,z)|\leq C|t-t_0|^{\beta}\ \ \text{for} \ \ |t-t_0|\leq 1,
\]
thus for $(x,z)\in B_l^{n+1}$ for small $l>0$,
\[
|v_R(x,z)-V_R(x_n,z)|=|V_R(R-\rho,z)-V_R(x_n,z)|\leq C|R-\rho-x_n|^{\beta}\leq Cl^{2\beta}.
\]
Here we used
\[
R-\rho-x_n=-\frac{|x'|^2}{R-x_n+\rho}.
\]
Then it follows that
\begin{equation*}
\begin{aligned}
|v_R(x,z)-U(x_n,z)|&\leq |v_R(x,z)-V_R(x_n,z)|+|V_R(x_n,z)-U(x_n,z)|\\
&\leq Cl^{2\beta}+|U(x_n,z)|(n-1)\frac{|x_n|}{R}\\
&\leq Cl^{2\beta}+\tilde{C} l^{\beta+1}\\
&\leq \tilde{C}l^{2\beta},
\end{aligned}
\end{equation*}
since we can require $\gamma>0$ small enough such that $\beta=\frac{2s}{2-\gamma}\leq 1$. And this gives the desired expansion \eqref{v_Rexp} with $a=1$.\\

In the last part, we need to show that
\begin{equation}\label{visvr}
\begin{aligned}
\lim_{z \to 0}{z^{\alpha}\partial_z v_R(x,z)}\geq \gamma v_R^{\gamma-1}(x,0)
\end{aligned}
\end{equation}
for all $x \in \{v_R(x,0)>0\}\cap B_1^n$. From our definition of $v_{R}$, $x \in \{v_R(x,0)>0\}$ means $t=R-\rho>0$. We prove \eqref{visvr} by showing
\begin{equation*}
\begin{aligned}
\lim_{z \to 0}{z^{\alpha}\partial_z v_R(x,z)}&=\lim_{z \to 0}{z^{\alpha}\partial_z V_R(R-\rho,z)}\\
&=(\frac{(n-1)t}{R}+1)\lim_{z \to 0}{z^{\alpha}\partial_z U(R-\rho,z)}\\
&=(\frac{(n-1)t}{R}+1) \gamma U^{\gamma-1}(R-\rho,0)\\
&=(\frac{(n-1)t}{R}+1)^{2-\gamma}\gamma v_R^{\gamma-1}(x,0)\\
&\geq \gamma v_R^{\gamma-1}(x,0).
\end{aligned}
\end{equation*}
So we complete the proof that $v_R$ is a comparison subsolution to the equation \eqref{fbeq}.\\

And now, we show that $v_R$ is strictly monotone increasing in the $e_n$-direction. Since
\[
\partial_{x_n}v_R(x)=-\frac{x_n-R}{\rho}\partial_tV_R(R-\rho,z),
\]
so we only need to show $\partial_tV_R(R-\rho,z)>0$, which follows from \eqref{dtV_R} and \eqref{U_t/U}.\\

Step 2. In this part we state the existence of $\tilde{v}_R$ satisfying \eqref{tildev_R1} and \eqref{tildev_R2}.\\
First we want to show there exists unique $\tilde{t}$ such that
\begin{equation}\label{U=V_R}
U(t,z)=V_R(t+\tilde{t},z) \ \ \text{in} \ \ (B_1^2)^+
\end{equation}
and
\begin{equation}\label{tildet}
|\tilde{t}+\frac{2(n-1)tr}{R}|\leq \frac{\tilde{C}}{R^2}r^3,
\end{equation}
with $r^2=t^2+z^2$ and universal $\tilde{C}$. Since $V_R$ is strictly increasing in $t-$direction except $\{(t,0), t\leq 0\}$, so it suffices to show
\begin{equation}\label{V_RU}
V_R(t-\frac{2(n-1)tr}{R}-\frac{\tilde{C}}{R^2}r^3)<U(t,z)<V_R(t-\frac{2(n-1)tr}{R}+\frac{\tilde{C}}{R^2}r^3).
\end{equation}
To prove this, let
\[
\bar{t}=-\frac{2(n-1)tr}{R}-\frac{\tilde{C}}{R^2}r^3
\]
and then
\begin{equation}\label{taylorexp}
V_R(t+\bar{t},z)=V_R(t,z)+\bar{t}\partial_tV_R(t,z)+\frac{1}{2}E|\bar{t}|^2
\end{equation}
with
\[
|E|\leq |\partial_{tt}V_R(\tau,z)|, t+\bar{t}<\tau<t.
\]
Claim that
\begin{equation}\label{V_Rtt}
|\partial_{tt}V_R(\tau,z)|\leq \frac{C'}{r^2}U(t,z).
\end{equation}
And following is the proof.
\begin{equation*}
\begin{aligned}
\partial_{tt}V_R(\tau,z)&=\frac{n-1}{R}U_t+(\frac{(n-1)\tau}{R}+1)U_{tt}+\frac{n-1}{R}U_t\\
&=2\frac{n-1}{R}r^{\beta-1}U_t(\frac{\tau}{r},\frac{z}{r})+(\frac{(n-1)\tau}{R}+1)r^{\beta-2}U_{tt}(\frac{\tau}{r},\frac{z}{r}),
\end{aligned}
\end{equation*}
using $U$ is homogeneous of degree $\beta$. Since $\tau$ is between $t$ and $t+\bar{t}$, so $(\frac{\tau}{r},\frac{z}{r})\in B_{3/2}^+/B_{1/2}^+$. Here we claim that
\[
|\partial_{tt}U(\frac{\tau}{r},\frac{z}{r})|\leq K_1U(\frac{\tau}{r},\frac{z}{r}),
\]
and
\[
|\partial_{t}U(\frac{\tau}{r},\frac{z}{r})|\leq K_2U(\frac{\tau}{r},\frac{z}{r}).
\]
The proofs of these two inequalities are given in Section \ref{U_t/U<C} and Section \ref{U_ttUsz} in the Appendix. Then
\begin{equation*}
\begin{aligned}
|\partial_{tt}V_R(\tau,z)|&\leq 2\frac{n-1}{R}r^{\beta-1}K_2U(\frac{\tau}{r},\frac{z}{r})+(\frac{(n-1)\tau}{R}+1)r^{\beta-2}K_1U(\frac{\tau}{r},\frac{z}{r})\\
&\leq \bar{C}r^{\beta-2}U(\frac{\tau}{r},\frac{z}{r}).
\end{aligned}
\end{equation*}
Now what we want to prove is 
\begin{equation}\label{U_st}
U(\frac{\tau}{r},\frac{z}{r})\leq KU(\frac{t}{r},\frac{z}{r}),
\end{equation}
and then we can show
\[
|\partial_{tt}V_R(\tau,z)|\leq  \bar{C}r^{\beta-2}U(\frac{\tau}{r},\frac{z}{r}) \leq  \bar{C}Kr^{-2}U(t,z).
\]
In Section \ref{UKU} in the Appendix a proof of \eqref{U_st} is given, and our claim \eqref{V_Rtt} is now proved. Using \eqref{taylorexp} with the claim \eqref{V_Rtt}, we will be able to prove the lower bound in \eqref{V_RU} if we prove the following
\[
U(t,z)>V_R(t,z)+\bar{t}\partial_tV_R(t,z)+\frac{C'}{2r^2}U(t,z)|\bar{t}|^2,
\]
and it is equivalent to prove
\begin{equation*}
\begin{aligned}
U(t,z)&>U(t,z)(\frac{(n-1)t}{R}+1)+\bar{t}((\frac{(n-1)t}{R}+1)U_t(t,z)+\frac{n-1}{R}U(t,z))\\
&+\frac{C'}{2r^2}U(t,z)|\bar{t}|^2.
\end{aligned}
\end{equation*}
Divide both sides by $U$ and times $r$, it is equivalent to show
\[
\frac{(n-1)t}{R}r+\bar{t}(\frac{(n-1)r}{R}+[\frac{(n-1)t}{R}+1]r\frac{U_t}{U})+\frac{C'}{2r}|\bar{t}|^2<0.
\]
Plug in $\bar{t}=-\frac{2(n-1)tr}{R}-\frac{\tilde{C}}{R^2}r^3$, it is equivalent to show
\[
\bar{t}[\frac{(n-1)r}{R}-1/2+(r\frac{U_t}{U})(\frac{(n-1)t}{R}+1)]+\frac{C'}{2r}|\bar{t}|^2<\frac{\tilde{C}}{2R^2}r^3.
\]
By what we proved in \eqref{U_t/U}, and for $R$ large enough such that
\[
|\bar{t}|\leq Kr^2/R,
\]
we can show the above inequality is right for appropriate universal $\tilde{C}$ and $R$ large enough, thus lower bound in \eqref{V_RU} is proved.\\

To conclude, we use $R-\rho-x_n=-\frac{|x'|^2}{R-x_n+\rho}$ with $\rho=\sqrt{|x'|^2+(x_n-R)^2}$ to write
\[
v_R(X-\tilde{v}_Re_n)=V_R(R-\rho(\tilde{v}_R),z)=V_R(x_n-\tilde{v_R}-\frac{|x'|^2}{R-x_n+\tilde{v}_R+\rho(\tilde{v}_R)},z),
\]
with $\rho(\eta)=\sqrt{|x'|^2+(x_n-\eta-R)^2}$. In view of \eqref{U=V_R}, if there exists $\tilde{v}_R=\tilde{v}_R(X)$ such that
\begin{equation}\label{tildev_R}
-\tilde{v}_R-\frac{|x'|^2}{R-x_n+\tilde{v}_R+\rho(\tilde{v}_R)}=\tilde{t},
\end{equation}
then
\[
U(X)=v_R(X-\tilde{v}_Re_n),
\]
and by the strict monotonicity of $v_R$ in $e_n$ direction, $\tilde{v}_R$ must be unique. Thus, the proposition will be proved if we show that there exists $\tilde{v}_R$ satisfying \eqref{tildev_R} and such that
\[
|{\tilde{v}_R}(X)-\gamma_R(X)|\leq C\frac{|X|^2}{R^2}.
\]
To do so, we define
\[
f(\eta)=-\eta-\frac{|x'|^2}{R-x_n+\eta+\rho(\eta)}, -1\leq \eta\leq 1,
\]
and we show that 
\[
f(\gamma_R(X)+C\frac{|X|^2}{R^2})\leq \tilde{t}\leq f(\gamma_R(X)-C\frac{|X|^2}{R^2}),
\]
and using \eqref{tildet} we only need to prove that
\[
f(\gamma_R(X)+C\frac{|X|^2}{R^2}) \leq -\frac{2(n-1)x_nr}{R}-\tilde{C}\frac{r^3}{R^2},
\]
and
\[
f(\gamma_R(X)-C\frac{|X|^2}{R^2}) \geq -\frac{2(n-1)x_nr}{R}+\tilde{C}\frac{r^3}{R^2}.
\]
To prove the first (the second one follows similarly), we define
\[
\bar{\eta}=\gamma_R(X)+C\frac{|X|^2}{R^2},
\]
and from the definition of $f$ and $\gamma_R$, it is equivalent to show
\[
\frac{|x'|^2}{2R}-C\frac{|X|^2}{R^2}-\frac{|x'|^2}{R-x_n+\bar{\eta}+\rho(\bar{\eta})}\leq -\tilde{C}\frac{r^3}{R^2}.
\]
Since $-1\leq \bar{\eta} \leq 1$, so
\[
R-x_n+\bar{\eta}+\rho(\bar{\eta})\leq 2R+5
\]
and the inequality is reduced to
\[
-C\frac{|X|^2}{R^2}+\frac{|x'|^2}{R^2}\leq -\tilde{C}\frac{r^3}{R^2},
\]
which is satisfied as long as $C-\tilde{C}\geq 1$. 
\end{proof}

Then we can easily obtain the following Corollary.
\begin{corollary}\label{constant}
There exist $\delta,c_0,C_0,C_1$ universal constants such that

\begin{equation}\label{cor1}
v_R(X+\frac{c_0}{R}e_n)\leq (1+\frac{C_0}{R})U(X) \ \ \text{in} \ \ \overline{(B_1^{n+1})^+}/B_{1/4}
\end{equation}
with strict inequality on $F(v_R(X+\frac{c_0}{R}e_n))\cap (\overline{(B_1^{n+1})^+}/B_{1/4})$, and
\begin{equation}\label{cor2}
v_R(X+\frac{c_0}{R}e_n)\geq U(X+\frac{c_0}{2R}e_n) \ \ \text{in} \ \  (B_{\delta}^{n+1})^+,
\end{equation}
\begin{equation}\label{cor3}
v_R(X-\frac{C_1}{R}e_n)\leq U(X)  \ \ \text{in} \ \ \overline{(B_1^{n+1})^+}.
\end{equation}
\end{corollary}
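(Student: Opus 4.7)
The plan is to reduce all three inequalities to statements about shifts in the argument of $U$. By Proposition \ref{proptildev_R} we have $U(Y) = v_R(Y - \tilde v_R(Y) e_n)$; inverting this relation via the fixed point $Y = Z + \xi_R(Z) e_n$ with $\xi_R(Z) := \tilde v_R(Y(Z))$ gives, for $|Z| \leq 1$ and $R$ large, the identity $v_R(Z) = U(Z + \xi_R(Z) e_n)$. The estimate $|\tilde v_R - \gamma_R| \leq C|Y|^2/R^2$ from Proposition \ref{proptildev_R} together with $|\gamma_R| \leq C|Y|^2/R$ yields the control $|\xi_R(Z)| \leq C/R$ and $|\xi_R(Z) - \gamma_R(Z)| \leq C/R^2$ uniformly in $Z$.

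With this representation, \eqref{cor3} and \eqref{cor2} follow quickly. For \eqref{cor3}, with $Z = X - C_1/R\, e_n$ the net shift $-C_1/R + \xi_R(Z)$ is $\leq 0$ whenever $C_1$ exceeds the universal bound on $|\xi_R|$, and $e_n$-monotonicity of $U$ gives $v_R(Z) = U(X + (\text{nonpositive})e_n) \leq U(X)$. For \eqref{cor2}, with $X \in B_\delta^+$ the correction satisfies $|\xi_R(Z)| \leq C\delta^2/R + C/R^2$; so for $\delta$ universally small the net shift $c_0/R + \xi_R(Z)$ stays $\geq c_0/(2R)$, and monotonicity of $U$ yields the conclusion.

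Inequality \eqref{cor1} is the subtle one. Writing $s := c_0/R + \xi_R(Z)$ with $|s| \leq C/R$, we must show $U(X + s e_n) \leq (1 + C_0/R) U(X)$ on $\overline{(B_1^{n+1})^+}/B_{1/4}$. On the bulk region where $U(X) > 0$, the pointwise bound $|U_t(X)| \leq C\, U(X)/|X|$ proved in the Appendix (Section \ref{U_t/U<C}) upgrades the Lipschitz estimate to a multiplicative one, giving $U(X + s e_n) \leq (1 + C|s|) U(X) \leq (1 + C_0/R) U(X)$ for a universal $C_0$. On the complementary region $\{x_n \leq 0,\, y = 0\}$ where $U(X) = 0$, a direct computation with $\rho^2 = |x'|^2 + (x_n + c_0/R - R)^2$ shows that $|X| \geq 1/4$ together with $x_n \leq 0$ forces $|x'|^2 \geq 1/32$ for $R$ large; then $|x'|^2 + (x_n + c_0/R)^2 \geq 2R(x_n + c_0/R)$ holds once $c_0$ is below a universal constant (e.g.\ $c_0 \leq 1/64$), so $\rho \geq R$, $v_R(X + c_0/R\, e_n) = 0$, and the inequality is trivial. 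The strict inequality on $F(v_R(\cdot + c_0/R\, e_n)) \cap (\overline{(B_1^{n+1})^+}/B_{1/4})$ follows because a Taylor expansion of $\rho = R$ places this free boundary along $x_n = (|x'|^2 - 2c_0)/(2R) + O(1/R^2)$ on $\{y=0\}$, and $|X| \geq 1/4$ then forces $|x'|^2 > 2c_0$, hence $x_n > 0$, $U(X) > 0$, and $0 < (1+C_0/R) U(X)$.

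The principal obstacle is the two-regime analysis of \eqref{cor1}, where one must match the multiplicative Lipschitz bound valid in the bulk with the geometric argument forcing both sides to vanish near the degenerate set $\{x_n \leq 0,\, y = 0\}$. Keeping the four universal constants $c_0, C_0, \delta, C_1$ consistent, together with a universal lower bound on $R$ inherited from Proposition \ref{proptildev_R}, is the main bookkeeping task.
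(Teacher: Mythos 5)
The representation $v_R(Z) = U(Z + \xi_R(Z)e_n)$ with $|\xi_R| \lesssim 1/R$ is the right reduction, and your treatment of \eqref{cor2} and \eqref{cor3} via $e_n$-monotonicity of $U$ is correct. The argument for \eqref{cor1}, however, has a genuine gap. You invoke a bound of the form $|U_t(X)|\leq C\,U(X)/|X|$, citing Section \ref{U_t/U<C}. What that section actually yields, after scaling the annulus estimate by $\beta$-homogeneity, is $|U_t(X)|\leq C\,U(X)/r$ with $r=\sqrt{x_n^2+y^2}$, i.e.\ the distance to $L=\{x_n=0,\,y=0\}$, \emph{not} the distance $|X|$ to the origin. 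On $\overline{(B_1^{n+1})^+}\setminus B_{1/4}$ the quantity $r$ can be arbitrarily small even though $|X|\geq 1/4$ (take $|x'|$ near $1$ and $(x_n,y)$ near the origin), so the logarithmic-derivative estimate degenerates there. Your two-regime split thus leaves uncovered a thin neighborhood of $L$ where $U(X)>0$ but the multiplicative Lipschitz bound fails, and it is not the same set as $\{x_n\leq 0,\,y=0\}$.

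The correct split is $\{r\geq c\}$ versus $\{r<c\}$ for a small universal $c$. On $\{r\geq c\}$ the derivative bound $|U_t|/U\leq C/c$ gives $U(X+se_n)\leq (1+C'|s|)U(X)$ and hence \eqref{cor1}. On $\{r<c,\ |X|\geq 1/4\}$ one should instead show the shift is nonpositive: by \eqref{tildev_R2}, with $Z=X+\tfrac{c_0}{R}e_n$ one has $\xi_R(Z)\approx\gamma_R(Z)=-\tfrac{|x'|^2}{2R}+2(n-1)\tfrac{z_n r'}{R}$, and here $|x'|^2\geq 1/16-c^2$ while $|z_n r'|=O(c^2)$, so $\xi_R(Z)\leq -c''/R$ for a universal $c''>0$ once $c$ is small and $R$ large. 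Choosing $c_0<c''$ forces $s=c_0/R+\xi_R(Z)\leq 0$, and $e_n$-monotonicity of $U$ gives $v_R(X+\tfrac{c_0}{R}e_n)=U(X+se_n)\leq U(X)$, which subsumes your $\{x_n\leq 0,\,y=0\}$ case. As a side remark, your intermediate claim that $|X|\geq 1/4$ and $x_n\leq 0$ force $|x'|^2\geq 1/32$ is false (take $x'=0$, $x_n=-1/4$, $y=0$); the conclusion $\rho\geq R$ is still true there, but because $x_n+c_0/R<0$ makes the right-hand side of your inequality nonpositive, not because of a lower bound on $|x'|$.
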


And now we will start proving Lemma \ref{RadialSub}. We prove the first statement, and the second one follows similarly.
\begin{proof} 
In view of \eqref{gU},
\[
g(\bar{X})-U(\bar{X})\geq U(\bar{X}_\epsilon e_n)-U(\bar{X})=\partial_tU(\bar{X}+\lambda e_n)\epsilon \geq c\epsilon
\]
for $\lambda \in (0,\epsilon)$. From Lemma \ref{compgU}, we get
\begin{equation}\label{cor4}
g(X)\geq (1+c'\epsilon)U(X) \ \ \text{in} \ \ (B_{1/4}^{n+1})^+.
\end{equation}
Now let $R=\frac{C_0}{c'\epsilon}$, constants in Corollary \ref{constant}. Then for $\epsilon$ small enough, $v_R$ is a subsolution to \eqref{fbeq} in $(B_2^{n+1})^+$ which is monotone increasing in the $e_n$-direction and it also satisfies inequalities in Corollary \ref{constant}. We now apply the Comparison Principle stated in Corollary \ref{comparisonp2}. Let
\[
v_R^t(X)=v_R(X+te_n)
\]
ad according to \eqref{cor3},
\[
v_R^{t_0}\leq U\leq g  \ \ \text{in} \ \ (B_{1/4}^{n+1})^+,
\]
with $t_0=-C_1/R$. Moreover, from \eqref{cor1} to \eqref{cor4}, we get that for our choice of $R$,
\[
v_R^{t_1}\leq (1+c'\epsilon)U\leq g  \ \ \text{in} \ \ \partial (B_{1/4}^{n+1})^+,
\]
with $t_1=c_0/R$, with strict inequality on $F(v_R^{t_1})\cap \partial (B_{1/4}^{n+1})^+$. In particular,
\[
g>0 \ \  \text{on} \ \ \mathcal{G}(v_R^{t_1})\cap (B_{1/4}^{n+1})^+.
\]
Thus we can apply Comparison Principle to prove
\[
v_R^{t_1}\leq g  \ \ \text{in} \ \ (B_{1/4}^{n+1})^+.
\]
And thus from \eqref{cor2} we obtain
\[
U(X+\frac{c_1}{R}e_n)\leq v_R^{t_1}(X)\leq g (X) \ \ \text{in} \ \  (B_{\delta}^{n+1})^+,
\]
which is desired in \eqref{gUtau} with $\tau=\frac{c_1c'}{C_0}$.
\end{proof}

\section{Improvement of flatness}\label{improvement}
In this section we will show the proof of the improvement of flatness property for solutions to \eqref{fbeq}.
\begin{theorem}[Improvement of flatness]\label{Improvement}
There exists $\bar{\epsilon}>0$ and $\rho>0$ universal constants such that for all $0<\epsilon<\bar{\epsilon}$, if $g$ solves \eqref{fbeq} with $0\in F(g)$ and it satisfies
\begin{equation}\label{improv1}
U(X-\epsilon e_n)\leq g(X)\leq U(X+\epsilon e_n)  \ \ \text{in} \ \  (B_{1}^{n+1})^+,
\end{equation}
then
\begin{equation}\label{improv2}
U(x\cdot \nu-\epsilon \rho/2,z)\leq g(X)\leq U(x\cdot \nu+\epsilon \rho/2,z)  \ \ \text{in} \ \  (B_{\rho}^{n+1})^+,
\end{equation}
for some direction $\nu \in \mathbb{R}^n$, $|\nu|=1$.
\end{theorem}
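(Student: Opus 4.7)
The plan is a by-now-standard compactness/linearization argument in the spirit of De Silva--Savin--Sire \cite{D1}, with the $\epsilon$-domain variation $\tilde g_\epsilon$ playing the role of the harmonic replacement in the classical one-phase problem. I argue by contradiction: suppose the conclusion fails. Then there exist sequences $\epsilon_k \to 0$ and $g_k$ solving \eqref{fbeq} with $0 \in F(g_k)$, $\epsilon_k$-flat as in \eqref{improv1}, such that \eqref{improv2} does not hold for any unit vector $\nu$, however small one chooses the universal $\rho$ (to be fixed later in the proof).

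The first step is to produce a compactness limit for the graphs of $\tilde{g}_{k} := \tilde{g}_{\epsilon_k}$ on, say, $(B_{1/2}^{n+1})^+ \setminus P$. Applying Corollary \ref{harnackcor} to $g_k$ iteratively, one obtains functions $a_k \le b_k$ with $\tilde{g}_k$ trapped between their graphs on $(B_{1/2}^{n+1})^+$, satisfying $b_k - a_k \to 0$ and having a uniform H\"older modulus of continuity. Up to a subsequence, both sequences converge uniformly to a single H\"older function $w : (B_{1/2}^{n+1})^+ \to [-1,1]$, and the graphs of $\tilde g_k$ converge in the Hausdorff sense to the graph of $w$. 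I also record that, since $0\in F(g_k)$, the limit $w$ vanishes on the trace of the free boundary; more precisely $w$ is defined away from $P$ but extends continuously up to $L$ by the radial-regularity part of the Harnack iteration.

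The second (and technical core) step is to verify that $w$ is a viscosity solution of the linearized problem \eqref{linearizedeq}. The interior equation $\operatorname{div}(y^\alpha \nabla(U_n w))=0$ and the Neumann condition $\lim_{y\to 0^+} y^\alpha \partial_y w = 0$ on $B_1^n \cap \{x_n>0\}$ are obtained by a standard perturbation: any smooth test function $\phi$ touching $w$ strictly from below gives, via the relation $U(X)=g_k(X-\epsilon_k \tilde g_k(X) e_n)$, a family of sub/super test domains $\phi_{\epsilon_k}$ built as in \eqref{psiepsilon} that touch $g_k$ from below at a nearby point; expanding $g_k$ in powers of $\epsilon_k$ and using that $U$ itself solves $\operatorname{div}(y^\alpha \nabla U)=0$ with the correct Neumann data (plus the cancellation $\lim_{y\to 0}y^\alpha \partial_y U = \gamma U^{\gamma-1}$ on both sides) produces the claimed PDE and Neumann condition for $w$ in the limit. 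The free boundary condition, namely the condition at points $X_0 \in B_{1/2}^n \cap L$, is the delicate one: here the argument uses the radial subsolution comparison principle of Lemma \ref{RadialSub}, via the subsolutions $v_R$ from Proposition \ref{proptildev_R}. If $\phi$ touches $w$ strictly by below at $X_0$ with expansion $\phi(X) = \phi(X_0) + a(X_0)\cdot(x'-x_0') + b(X_0) r + O(\cdot)$ and $b(X_0)>0$, one builds a slightly shifted family of $v_R$-type subsolutions that beat $g_k$ in the $\epsilon_k$-variation sense but whose limiting graph lies strictly below $\phi$; the inequality \eqref{gUtau} applied to this family yields a contradiction. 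I expect \emph{this} to be the main obstacle: one has to produce the radial subsolution comparison uniformly in $\gamma<\gamma_0$, and the asymptotic expansion \eqref{tildev_R2} of $\tilde v_R$ is precisely what shows that the limiting object is a classical subsolution of \eqref{linearizedeq} at $L$.

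The third step is to use the regularity theory of the linearized problem developed in Section \ref{linearizedregularity}: since $w$ solves \eqref{linearizedeq} and satisfies $\|w\|_{L^\infty} \le 1$, one has a $C^{1,\alpha_0}$-type expansion of $w$ at the origin of the form $w(X) = \nu'\cdot x' + O(|X|^{1+\alpha_0})$, where $\nu' \in \mathbb R^{n-1}$ and the constant $w(0)$ vanishes because $0 \in F(g_k)$. Hence for a universal $\rho$ (chosen once and for all so that the $O(|X|^{1+\alpha_0})$ remainder is smaller than, say, $\rho/8$), one gets $|w(X) - \nu'\cdot x'| \le \rho/8$ on $(B_\rho^{n+1})^+$. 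Transferring this back via the uniform convergence $\tilde g_k \to w$ and unwinding the definition $U(X) = g_k(X - \epsilon_k \tilde g_k(X) e_n)$, one sees that for all $k$ large the function $g_k$ is trapped, in $(B_\rho^{n+1})^+$, between $U(x\cdot\nu_k \pm \epsilon_k \rho/2, z)$ for the unit vector $\nu_k := (\nu',1)/|(\nu',1)|$. This contradicts our standing assumption that \eqref{improv2} fails for $g_k$, completing the proof.
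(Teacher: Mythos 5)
Your proposal follows the same compactness-and-linearization scheme as the paper's own proof (contradiction setup, Corollary \ref{harnackcor} for compactness, Lemma \ref{ginfty} for identifying the limit as a solution of \eqref{linearizedeq}, Corollary \ref{linearizedcor} for its regularity, and the transfer back via the $\epsilon$-domain variation, i.e.\ Lemma \ref{L1}), and the argument is sound. One small misattribution: in your discussion of the delicate free boundary condition at points of $L$, you invoke Lemma \ref{RadialSub} and its conclusion \eqref{gUtau}, but the paper's proof of Lemma \ref{ginfty} actually relies on Lemma \ref{evariationcomp} (the comparison principle at the level of $\epsilon$-domain variations $\tilde v_\epsilon$ vs.\ $\tilde g_\epsilon$) together with Proposition \ref{proptildev_R} and the expansion \eqref{tildev_R2}; Lemma \ref{RadialSub} is the ingredient of the Harnack inequality, not of the linearization step. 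Since your description nonetheless speaks of ``beating $g_k$ in the $\epsilon_k$-variation sense,'' you clearly have the right mechanism in mind, so this is a citation slip rather than a gap.
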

The proof of Theorem \ref{Improvement} is divided into the next four lemmas.\\

The following lemma is the same as in Lemma 7.2 in \cite{D3} and its proof remained unchanged since it only depend on elementary properties related to the definition of $\tilde{g}_{\epsilon}$, and does not depend on the equation satisfied by $g$.\\
\begin{lemma}\label{L1}
Let $g$ be a solution to \eqref{fbeq} with $0\in F(g)$ and satisfying \eqref{improv1}. Assume that
\begin{equation}\label{}
a_0\cdot x'-\rho/4\leq \tilde{g}_{\epsilon}(X)\leq a_0\cdot x'+\rho/4  \ \ \text{in} \ \  (B_{2\rho}^{n+1})^+,
\end{equation}
for some $a_0\in \mathbb{R}^{n-1}$. Then if $\epsilon \leq \bar{\epsilon}(a_0,\rho)$, $g$ satisfies \eqref{improv2} in $(B_{\rho}^{n+1})^+$.
\end{lemma}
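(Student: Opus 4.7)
The plan is to convert the hypothesized linear bound on $\tilde g_\epsilon$ into the desired rotated flatness for $g$ by (i) choosing a unit direction $\nu$ that encodes the gradient $a_0$ and (ii) invoking the strict monotonicity of $U$ in its first argument. Concretely, I would set
\[
\nu=\frac{e_n+\epsilon(a_0,0)}{\sqrt{1+\epsilon^2|a_0|^2}}\in \mathbb{R}^n,
\]
so that for every $x\in\mathbb{R}^n$ one has the expansion
\[
x\cdot\nu=\frac{x_n+\epsilon\,a_0\cdot x'}{\sqrt{1+\epsilon^2|a_0|^2}}=(x_n+\epsilon\,a_0\cdot x')\bigl(1+O(\epsilon^2|a_0|^2)\bigr).
\]

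Next I would fix $X=(x,z)\in(B_\rho^{n+1})^+$ and let $w\in\tilde g_\epsilon(Z)$ for $Z:=X+\epsilon w\,e_n$, which is well defined since the claim proved just after the definition of $\tilde g_\epsilon$ guarantees $|w|\le 1$ and hence $Z\in (B_{2\rho}^{n+1})^+$ provided $\epsilon\le\rho$. The $x'$-coordinates of $Z$ and $X$ coincide, so the assumption on $\tilde g_\epsilon$ on $(B_{2\rho}^{n+1})^+$ yields
\[
a_0\cdot x'-\rho/4\ \le\ w\ \le\ a_0\cdot x'+\rho/4.
\]
Using the identity $g(X)=U(x_n+\epsilon w,z)$ and the strict monotonicity of $t\mapsto U(t,z)$, this immediately gives
\[
U\!\left(x_n+\epsilon(a_0\cdot x'-\rho/4),z\right)\ \le\ g(X)\ \le\ U\!\left(x_n+\epsilon(a_0\cdot x'+\rho/4),z\right).
\]

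Finally I would absorb the quadratic error from the expansion above into the $\pm\epsilon\rho/4$ buffer: for $X\in(B_\rho^{n+1})^+$ one has $|x\cdot\nu|\le\rho$, so
\[
\bigl|\,x_n+\epsilon\,a_0\cdot x'-x\cdot\nu\,\bigr|=|x\cdot\nu|\bigl(\sqrt{1+\epsilon^2|a_0|^2}-1\bigr)\le C\epsilon^2|a_0|^2\rho\ \le\ \epsilon\rho/4
\]
whenever $\epsilon\le\bar\epsilon(a_0,\rho)$; plugging this into the previous display and using monotonicity of $U$ one more time delivers the required sandwich
\[
U(x\cdot\nu-\epsilon\rho/2,z)\ \le\ g(X)\ \le\ U(x\cdot\nu+\epsilon\rho/2,z)\quad\text{in }(B_\rho^{n+1})^+.
\]
There is no real analytic obstacle in this argument: it is a purely geometric/elementary consequence of the definition of $\tilde g_\epsilon$ and the monotonicity of $U$. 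The only mild care needed is (a) verifying that the auxiliary point $Z$ stays inside the region $(B_{2\rho}^{n+1})^+$ where the hypothesis holds, which is immediate from $|w|\le 1$ and the smallness of $\epsilon/\rho$, and (b) noting that any multi-valuedness of $\tilde g_\epsilon$ is harmless because the bound is imposed on all selections. This matches precisely the structure of Lemma 7.2 in \cite{D3}, where the equation-dependent content of the free boundary problem plays no role.
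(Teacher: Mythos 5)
Your overall strategy is correct and matches the intended argument (the paper remarks that this is Lemma 7.2 of De Silva--Roquejoffre and that the proof depends only on elementary properties of $\tilde g_\epsilon$): choose $\nu$ to encode $a_0$, turn the two-sided bound on $\tilde g_\epsilon$ into the two-sided bound $U\bigl(x_n+\epsilon(a_0\cdot x'-\rho/4),z\bigr)\le g(X)\le U\bigl(x_n+\epsilon(a_0\cdot x'+\rho/4),z\bigr)$, and then absorb the quadratic tilt error $|x_n+\epsilon a_0\cdot x'-x\cdot\nu|\le C\epsilon^2|a_0|^2\rho$ into the $\pm\epsilon\rho/4$ slack by taking $\epsilon\le\bar\epsilon(a_0,\rho)$. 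The error analysis at the end is exactly right.

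The one step that is waved past is the ``well defined'' claim in the middle. The equation $Z=X+\epsilon w\,e_n$ with $w\in\tilde g_\epsilon(Z)$ is an implicit equation for $w$, and the fact that $|w|\le 1$ on the domain of $\tilde g_\epsilon$ is a statement about the range, not a proof that a solution $w$ exists. For a given $X$ one must show that some $w\in[-1,1]$ satisfies $U(x_n+\epsilon w,z)=g(X)$; this does follow from \eqref{improv1} and the intermediate value theorem when $g(X)>0$, but it fails when $g(X)=0$ (then the candidate $Z$ lands on $P$, where $\tilde g_\epsilon$ is not defined), and that degenerate case is exactly where the lower bound $U(x\cdot\nu-\epsilon\rho/2,z)\le g(X)$ needs a separate check. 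The cleaner route --- and what the paper sets up for precisely this purpose --- is to apply the comparison statement around \eqref{psiepsilon}: take the Lipschitz functions $\tilde\psi^\pm(X)=a_0\cdot x'\pm\rho/4$, note that $\tilde\psi^-\le\tilde g_\epsilon\le\tilde\psi^+$, build the associated $\psi_\epsilon^\pm$, and conclude $\psi_\epsilon^-\le g\le\psi_\epsilon^+$, where a direct computation gives $\psi_\epsilon^\pm(X)=U\bigl(x_n+\epsilon(a_0\cdot x'\pm\rho/4),z\bigr)$. That mechanism sidesteps both the implicit definition and the degenerate contact-set case, after which your $\nu$-rotation and error estimate finish the proof unchanged. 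So: right approach, one step that either needs the short IVT plus degenerate-case argument, or (better) the $\psi_\epsilon$ comparison the paper already provides.
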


The next lemma follows immediately from Corollary \ref{harnackcor}.

\begin{lemma}\label{L2}
Let $\epsilon_k \to 0$ and let $g_k$ be a sequence of solutions to \eqref{fbeq} with $0 \in F(g_k)$ satisfying
\begin{equation}\label{g_kU}
U(X-\epsilon_k e_n)\leq g_k(X)\leq U(X+\epsilon_k e_n)  \ \ \text{in} \ \  (B_{1}^{n+1})^+.
\end{equation}
Denote by $\tilde{g}_k$ the $e_k$-domain variation of $g_k$. Then the sequence of sets
\[
A_k:=\{(X,\tilde{g}_k(X)):X\in (B_{1-\epsilon_k}^{n+1})^+\},
\]
has a subsequence that converges uniformly in Hausdorff distance in $ (B_{1/2}^{n+1})^+$ to the graph
\[
A_{\infty}:=\{(X,\tilde{g}_{\infty}(X)):X\in (B_{1/2}^{n+1})^+\},
\]
where $\tilde{g}_{\infty}$ is H\"{o}lder continuous.
\end{lemma}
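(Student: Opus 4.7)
The plan is to apply Corollary \ref{harnackcor} iteratively to each $g_k$ and then invoke Arzel\`a--Ascoli together with the pinching of the upper and lower envelopes of the $\epsilon_k$-domain variations.

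First, since $g_k$ satisfies \eqref{g_kU} with $\epsilon_k \to 0$, for each fixed $k$ large enough I pick the integer $m_k$ by the rule $2\epsilon_k(1-\eta)^{m_k}\eta^{-m_k}\le \bar\epsilon$ chosen as large as possible. Applying Corollary \ref{harnackcor} produces functions $a_{\epsilon_k}=:a_k$ and $b_{\epsilon_k}=:b_k$ defined on $(B_{1/2}^{n+1})^+$ such that
\[
a_k(X)\le \tilde g_k(X)\le b_k(X)\ \text{on}\ (B_{1/2-\epsilon_k}^{n+1})^+\setminus P,
\]
with $b_k-a_k\le 2(1-\eta)^{m_k-1}\to 0$ and with a common H\"older modulus of continuity $At^B$ depending only on $\eta$. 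This is the step that supplies compactness uniformly in $k$.

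Next, by Arzel\`a--Ascoli, passing to a subsequence I may assume $a_k\to a_\infty$ and $b_k\to b_\infty$ uniformly on $(B_{1/2}^{n+1})^+$. Since $b_k-a_k\to 0$, we get $a_\infty=b_\infty=:\tilde g_\infty$, and $\tilde g_\infty$ inherits the H\"older modulus $At^B$. Because every element of $A_k\cap((B_{1/2}^{n+1})^+\times[-1,1])$ is pinched between the graphs of $a_k$ and $b_k$, the set $A_k$ converges in Hausdorff distance to the graph
\[
A_\infty=\{(X,\tilde g_\infty(X)):X\in(B_{1/2}^{n+1})^+\}.
\]
Indeed, any accumulation point $(X_0,w_0)$ of $(X_k,\tilde g_k(X_k))\in A_k$ satisfies $a_k(X_k)\le \tilde g_k(X_k)\le b_k(X_k)$, and uniform convergence forces $w_0=\tilde g_\infty(X_0)$. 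Conversely, for each $X\in (B_{1/2}^{n+1})^+$ the pair $(X,\tilde g_\infty(X))$ is approximated by $(X,a_k(X))$, which lies in the closure of $A_k$ up to an error controlled by $b_k-a_k$.

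The step I expect to be most delicate is controlling the passage from the pointwise (possibly multi-valued) set $A_k$ to a single-valued graph limit; but this is exactly what the uniform sandwich $a_k\le\tilde g_k\le b_k$ with $b_k-a_k\to 0$ provides. All other ingredients are purely soft compactness and Arzel\`a--Ascoli applied to the uniformly H\"older envelopes from Corollary \ref{harnackcor}.
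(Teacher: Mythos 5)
Your proof is correct and is exactly the expansion the paper intends when it says the lemma ``follows immediately from Corollary \ref{harnackcor}'': you sandwich the (possibly multi-valued) sets $A_k$ between the uniformly H\"older envelopes $a_k,b_k$ produced by the corollary, note that taking $m_k$ maximal forces $m_k\to\infty$ hence $b_k-a_k\to 0$, and apply Arzel\`a--Ascoli. The one point worth being explicit about, which you do address, is that the pinching $a_k\le\tilde g_k\le b_k$ with $b_k-a_k\to 0$ is precisely what collapses the multi-valued sets to a single-valued graph in the Hausdorff limit.
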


\begin{lemma}\label{L3}
The limiting function satisfies $\tilde{g}_{\infty} \in C^{1,1}_{loc}(B_{1/2}^{n+1})^+$.
\end{lemma}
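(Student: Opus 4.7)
The plan is to show that $\tilde{g}_\infty$ is a viscosity solution of the linearized problem \eqref{linearizedeq} and then invoke the interior $C^{1,1}$ regularity for that problem, which is the content of Section \ref{linearizedregularity}. These two ingredients together deliver the advertised conclusion.

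First I would verify the bulk equation and the Neumann-type condition away from $L$. Suppose $\phi$ is smooth and touches $\tilde{g}_\infty$ strictly from below at a point $X_0 \in (B_{1/2}^{n+1})^+ \setminus (P\cup L)$. Build the ``$\phi$-shifted'' competitor $\phi_\epsilon$ defined implicitly by $U(X)=\phi_\epsilon(X-\epsilon\phi(X)e_n)$. A Taylor expansion in $\epsilon$, using that $U$ itself solves $\operatorname{div}(y^\alpha\nabla U)=0$ and $\lim_{y\to 0^+}y^\alpha\partial_y U=\gamma U^{\gamma-1}$ on $\{x_n>0,y=0\}$, gives to leading order
\[
\operatorname{div}(y^\alpha \nabla \phi_\epsilon)=\epsilon\,\operatorname{div}(y^\alpha\nabla(U_n\phi))+o(\epsilon),
\]
and a matching cancellation for the boundary condition of $\phi_\epsilon$ on $\{x_n>0,y=0\}$. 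If the linearized inequality fails strictly at $X_0$, translating $\phi_\epsilon$ by $ce_n$ in the $e_n$-direction yields a strict subsolution of \eqref{fbeq} whose $\epsilon$-domain variation lies strictly below $\tilde g_k$ on the boundary of a small neighborhood but above it at $X_0$, contradicting Lemma \ref{evariationcomp} for all large $k$ by Lemma \ref{L2}.

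Next I would verify the free boundary condition on $L=\{x_n=0,y=0\}$. Fix $X_0\in B_{1/2}^n\cap L$ and let $\phi$ be a test function as in the definition of viscosity solution, with $b(X_0)>0$ (the other sign is symmetric). The natural barriers are the radial functions $v_R$ from Proposition \ref{proptildev_R}; the expansion \eqref{tildev_R2} shows that $\tilde v_R$ carries precisely the term $2(n-1)x_n r/R$, which is exactly the right shape to generate an $O(r)$ growth. Rescaling and translating $v_R$ (and using the reflected supersolution implicit in Corollary \ref{constant}) produces, for each $k$, a family of comparison sub/supersolutions whose $\epsilon_k$-domain variations dominate $\phi(X_0)+a(X_0)\cdot(x'-x_0')+\tfrac12 b(X_0)r$ on a small ball but strictly exceed $\tilde g_k$ at the center; again Lemma \ref{evariationcomp} and the convergence from Lemma \ref{L2} produce a contradiction. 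This establishes that $\tilde g_\infty$ is a viscosity solution of \eqref{linearizedeq}, after which the $C^{1,1}_{loc}$ bound is a direct consequence of the regularity theory in Section \ref{linearizedregularity}.

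I expect the main obstacle to be the free boundary condition on $L$. Lemma \ref{L2} supplies only H\"older control on $\tilde g_\infty$, so the set of available test functions is constrained, and one must verify that the radial barriers $v_R$ produce precisely the leading $b(X_0)\,r$ term without a spurious contribution coming from the weight $y^\alpha$ or from lower-order pieces of $U_n$. This rests on the fine expansions for $U$, $U_t$ and $U_{tt}$ collected in the Appendix and already used in Proposition \ref{proptildev_R}; a secondary subtlety is that $\tilde g_\infty$ may in principle be multi-valued, so the comparison must be carried out at the level of the graphs $A_k$ from Lemma \ref{L2} rather than as a pointwise inequality.
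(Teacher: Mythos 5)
Your plan has the logic inverted. In this paper, $C^{1,1}_{loc}$ regularity of $\tilde{g}_{\infty}$ is \emph{part of the definition} of being a viscosity solution of the linearized problem \eqref{linearizedeq} (see item (i) of the definition in Section \ref{Linearzed problem}: ``$w\in C^{1,1}_{loc}$ \emph{and} it satisfies \ldots''). Consequently Lemma \ref{L3} must be proved \emph{before} one can assert that $\tilde{g}_{\infty}$ solves \eqref{linearizedeq}, which is the content of the subsequent Lemma \ref{ginfty}. Proposing to first establish that $\tilde g_\infty$ is a viscosity solution and then deduce $C^{1,1}$ is therefore circular. It also duplicates essentially the entire proof of Lemma \ref{ginfty}. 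Moreover, Section \ref{linearizedregularity} (Theorem \ref{linearized}) does not furnish interior $C^{1,1}$ bounds at all: it gives a $C^{1,\theta}$-type pointwise expansion \eqref{hX} at points of $L$, under the standing assumption that $w$ is already $C^{1,1}_{loc}$.

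The actual argument is much more elementary and bypasses the viscosity framework entirely. Fix $Y$ in the interior of $(B_{1/2}^{n+1})^+$ and let $\delta = \mathrm{dist}(Y,L)$. The function $g_k - U$ satisfies $\mathrm{div}(y^{\alpha}\nabla(g_k-U))=0$ in $B_{\delta/2}^{n+1}(Y)$, and the flatness assumption \eqref{g_kU} gives $\|g_k-U\|_{L^\infty(B_{\delta/2}^{n+1}(Y))}\le C\epsilon_k$. Interior estimates for this degenerate (but uniformly elliptic away from $\{y=0\}$, and well-understood on $\{y=0\}$ away from $L$) equation then yield $\|g_k-U\|_{C^{1,1}(B_{\delta/4}^{n+1}(Y))}\le C\epsilon_k$. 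Since $\tilde g_{\epsilon_k}$ is defined implicitly by $U(X)=g_k(X-\epsilon_k\tilde g_{\epsilon_k}(X)e_n)$ and $\partial_n U > 0$ away from $P$, the implicit function theorem transfers the uniform $C^{1,1}$ bound from $g_k-U$ to $\tilde g_{\epsilon_k}$ in $B_{\delta/8}^{n+1}(Y)$, with a constant depending on $\delta$ but not on $k$. Passing to the limit gives $\tilde g_\infty \in C^{1,1}(B_{\delta/8}^{n+1}(Y))$, hence $\tilde g_\infty \in C^{1,1}_{loc}\bigl((B_{1/2}^{n+1})^+\bigr)$. No comparison with the radial barriers $v_R$ and no appeal to Lemma \ref{evariationcomp} is needed here; those tools belong to the proof of Lemma \ref{ginfty}, which comes afterwards and uses the present lemma as an ingredient.
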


\begin{proof}
We fix a point $Y \in (B_{1/2}^{n+1})^+$, and let $\delta$ be the distance from $Y$ to $L=\{x_n=0,y=0\}$. It suffices to show that the function $\tilde{g}_{\epsilon}$ are uniformly $C^{1,1}$ in $B_{\delta/8}^{n+1}(Y)$. Since $g_{\epsilon}-U$ solves 
\[
div(y^{\alpha}\nabla(g_{\epsilon}-U))=0  \ \ \text{in} \ \  B_{\delta/2}^{n+1}(Y),
\]
we can see
\[
\|g_{\epsilon}-U\|_{C^{1,1}(B_{\delta/4}^{n+1}(Y))}\leq C\|g_{\epsilon}-U\|_{L^{\infty}(B_{\delta/2}^{n+1}(Y))}\leq C\epsilon,
\]
and  by implicit function theorem it follows as
\[
\|\tilde{g}_{\epsilon}\|_{C^{1,1}(B_{\delta/8}^{n+1}(Y))}\leq C,
\]
with constant $C$ depending on $Y$ and $\delta$.
\end{proof}

\begin{lemma}\label{ginfty}
The function $\tilde{g}_{\infty}$ solves the linearized problem \eqref{linearizedeq} in $(B_{1/2}^{n+1})^+$.
\end{lemma}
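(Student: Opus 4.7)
The plan is to verify via a standard viscosity-solution argument, in the spirit of De Silva--Savin--Sire, that $\tilde{g}_\infty$ satisfies each of the three conditions in \eqref{linearizedeq}, leveraging the compactness from Lemmas \ref{L2} and \ref{L3}, the comparison principle for $\epsilon$-domain variations (Lemma \ref{evariationcomp}), and the test-function construction $\psi_\epsilon$ from \eqref{psiepsilon}. The master strategy is: assume a smooth $\phi$ touches $\tilde{g}_\infty$ strictly from below at a point $X_0\in (B_{1/2}^{n+1})^+$ and violates one of the three conditions; build from $\phi$ an auxiliary function $\psi_\epsilon$ which, for $\epsilon$ small, is a strict comparison subsolution to \eqref{fbeq}; apply Lemma \ref{evariationcomp} to propagate the inequality $\phi+c\leq \tilde{g}_{\epsilon_k}$ from the boundary of a small ball into its interior; and let $k\to\infty$ to contradict $\phi(X_0)=\tilde{g}_\infty(X_0)$. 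The argument when $\phi$ touches from above is symmetric.

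First I would handle the interior equation and the Neumann condition on $\{x_n>0,y=0\}$ simultaneously. Fix $X_0$ an interior point or a thin-space point with $x_n>0$, let $\phi$ touch $\tilde{g}_\infty$ strictly by below there, and suppose either $div(y^\alpha\nabla(U_n\phi))>0$ near $X_0$ or $\lim_{y\to 0^+}y^\alpha\partial_y\phi>0$ at $X_0$. Define $\psi_\epsilon$ implicitly via $U(X)=\psi_\epsilon(X-\epsilon\phi(X)e_n)$; to leading order $\psi_\epsilon=U+\epsilon U_n\phi+o(\epsilon)$. A Taylor expansion of the interior operator $div(y^\alpha\nabla\,\cdot\,)$ and of the nonlinear Neumann operator $\lim_{y\to 0^+}y^\alpha\partial_y(\,\cdot\,)-\gamma(\,\cdot\,)^{\gamma-1}$ about $U$ then yields that $\psi_\epsilon$ is a strict subsolution of \eqref{fbeq} for $\epsilon$ sufficiently small. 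Because $\phi$ touches $\tilde{g}_\infty$ strictly by below, there exist $c,r>0$ with $\phi+c\leq \tilde{g}_\infty$ on $\partial B_r(X_0)$; by the uniform Hausdorff convergence $A_k\to A_\infty$ from Lemma \ref{L2} this upgrades to $\phi+c/2\leq \tilde{g}_{\epsilon_k}$ on $\partial B_r(X_0)$ for all large $k$. Lemma \ref{evariationcomp} propagates this inside $B_r(X_0)$, and the limit $k\to\infty$ forces $\phi+c/2\leq \tilde{g}_\infty$ at $X_0$, a contradiction.

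For the free-boundary condition $|\nabla_r w|(X_0)=0$ on $L$, the structure is the same but now $X_0\in L$ and the test function $\phi$ has the expansion required by the definition of viscosity solution to \eqref{linearizedeq}, namely $\phi(X)=\phi(X_0)+a(X_0)\cdot(x'-x_0')+b(X_0)r+O(|x'-x_0'|^2+r^{1+\theta})$ with $b(X_0)>0$. The corresponding $\psi_\epsilon$ built from $\phi$ has a free boundary slightly displaced from $\{x_n=0\}$, and the $b(X_0)r$ term forces the coefficient $a$ in the expansion $\psi_\epsilon(x,y)=aU((x-x_0)\cdot\nu(x_0),y)+o(|(x-x_0,y)|^\beta)$ at points of $F(\psi_\epsilon)$ near $X_0$ to be strictly larger than $1$; thus $\psi_\epsilon$ satisfies Definition 6.2(iv) and is a strict subsolution in the free-boundary sense. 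Applying Lemma \ref{evariationcomp} as before yields the contradiction.

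The main obstacle is verifying rigorously that $\psi_\epsilon$ is a genuine strict subsolution of the full free-boundary problem \eqref{fbeq}, especially on the thin space. The linearization of $\gamma g^{\gamma-1}$ around $U$ contributes a term $\gamma(\gamma-1)U^{\gamma-2}U_n\phi$, which must exactly cancel against the contribution obtained by differentiating the identity $\lim_{y\to 0^+}y^\alpha\partial_y U=\gamma U^{\gamma-1}$ with respect to $x_n$; it is precisely this cancellation that collapses the linearized Neumann condition to $\lim_{y\to 0^+}y^\alpha\partial_y w=0$. Controlling the remainder $o(\epsilon)$ uniformly, together with producing the sharp free-boundary expansion of $\psi_\epsilon$ required to capture the sign condition on $b(X_0)$ at points of $L$, is the technical core of the proof.
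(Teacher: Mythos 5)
Your plan correctly identifies the three conditions to verify and uses the right ingredients (Lemma \ref{L2}, Lemma \ref{evariationcomp}, the $\psi_\epsilon$ construction), but it diverges from the paper in two ways, one cosmetic and one substantive.

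For the interior equation and the Neumann condition on $\{x_n>0,y=0\}$, the paper does not build a strict subsolution and invoke comparison; it argues more directly. From $\tilde\phi_k=\tilde\phi+c_k$ touching $\tilde g_k$ by below at $X_k$, one gets $\phi_k$ touching $g_k$ by below at $Y_k=X_k-\epsilon_k\tilde\phi_k(X_k)e_n$ (via \eqref{psiepsilon}), and since $g_k$ is a viscosity solution one reads off the pointwise inequalities $\Delta\phi_k(Y_k)+\frac{\alpha}{z_k}\partial_{n+1}\phi_k(Y_k)\le 0$ and $\lim_{z\to 0}z^\alpha\partial_z\phi_k(Y_k)\ge\gamma\phi_k^{\gamma-1}(Y_k)$; a chain-rule Taylor expansion then yields the claimed inequalities for $\tilde\phi$ up to $O(\epsilon_k)$. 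The exact identity $\phi_k(Y_k)=U(X_k)$ cancels the $\gamma U^{\gamma-1}$ terms without any linearization of $t\mapsto t^{\gamma-1}$. Your comparison-principle route is a valid alternative, but the cancellation you describe (differentiating $\lim y^\alpha\partial_y U=\gamma U^{\gamma-1}$ in $x_n$) is what the direct computation does implicitly, and it buys no extra generality while adding the burden of a localized comparison lemma.

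The substantive gap is in the free-boundary condition $|\nabla_r w|(X_0)=0$ on $L$. Your claim that a test function $\phi$ with expansion $\phi(X_0)+a'\cdot(x'-x_0')+b r+O(\cdot)$, $b>0$, produces a $\psi_\epsilon$ which is a strict comparison subsolution with free-boundary coefficient $a>1$ in the sense of Definition 6.2(ii) is not justified and in fact fails. Near a point $Y_0\in F(\psi_\epsilon)$ one has, to leading order, $\psi_\epsilon(Y)\approx U\bigl((y_n-y_{0,n})+\epsilon b\,r,\,z\bigr)$. Setting $t=y_n-y_{0,n}$, $r=\sqrt{t^2+z^2}$, $t=r\cos\theta$, $z=r\sin\theta$, homogeneity of $U$ gives
\[
\frac{U(t+\epsilon b r,z)}{U(t,z)}=\frac{U(\cos\theta+\epsilon b,\sin\theta)}{U(\cos\theta,\sin\theta)}=1+\epsilon b\,f(\theta)+O(\epsilon^2),
\]
where $f(\theta)=r\,U_t/U$ is the function from Section \ref{U_tU} with $f(0)=\beta$ and $f(\pi)=2s-\beta$. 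This ratio is not asymptotically constant as $(t,z)\to 0$, so $\psi_\epsilon$ does not admit an expansion $aU((x-x_0)\cdot\nu,z)+o(|X-Y_0|^\beta)$ with a single constant $a$, and condition (ii) of Definition 6.2 cannot be verified for it. The paper avoids this entirely: it touches $\psi$ from below by the specific quadratic $q(X)=\psi(X_0)-\frac{\theta}{2}|x'-y_0'|^2+2\theta(n-1)x_n r$, which is exactly (up to translation) the leading profile $\gamma_R$ of the domain variation $\tilde v_R$ from Proposition \ref{proptildev_R}; it then compares $\tilde g_k$ with $\tilde w_k$, the domain variation of the shifted radial subsolution $w_k=v_{R_k}(\cdot)$, whose subsolution property and expansion have already been verified explicitly. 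This substitution — replacing the abstract $\psi_\epsilon$ by the concrete, pre-verified family $v_R$ — is the crux of the argument and is entirely missing from your proposal; without it, the contradiction at $X_0\in L$ cannot be closed.
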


\begin{proof}
We start by showing that in the sense of viscosity, $U_n\tilde{g}_{\infty}$ satisfies 
\[
div(z^{\alpha}\nabla(U_n\tilde{g}_{\infty}))=0  \ \ \text{in} \ \  (B_{1/2}^{n+1})^+.
\]
Let $\tilde{\phi}$ be a $C^2$ function touching $\tilde{g}_{\infty}$ by below at $X_0=(x_0,z_0) \in  (B_{1/2}^{n+1})^+$, and we want to show that
\begin{equation}\label{U_ntildephi}
\Delta(U_n\tilde{\phi})(X_0)+\alpha\frac{\partial_z(U_n\tilde{\phi})(X_0)}{z_0}\leq 0.
\end{equation}
By Lemma \ref{L2}, the sequence $A_k$ converges uniformly to $A_{\infty}$, thus there exists a sequence of constants $c_k\to 0$ and a sequence of points $X_k\to X_0$ such that $\tilde{\phi}_k:=\tilde{\phi}+c_k$ touches $\tilde{g}_k$ by below at $X_k$ for $k$ large enough.\\
Define $\phi_k$ by below
\begin{equation}\label{phi_k}
\phi_k(X-\epsilon_k \tilde{\phi}_k(X)e_n)=U(X).
\end{equation}
Then according to \eqref{psiepsilon}, $\phi_k$ touches $g_k$ by below at $Y_k=X_k-\epsilon_k \tilde{\phi}_k(X_k)e_n$, for $k$ large enough. Thus, since $g_k$ solves 
\[
div(z^{\alpha}\nabla g_k)=0  \ \ \text{in} \ \  (B_{1}^{n+1})^+,
\]
it follows that
\begin{equation}\label{Deltaphi_kY_k}
\Delta(\phi_k)(Y_k)+\alpha\frac{\partial_{n+1}(\phi_k)(Y_k)}{z_k}\leq 0.
\end{equation}
Here we denote $\partial_{n+1}$ as the (n+1)-th derivative (same as $\partial_z$), and $z_k$ is the $n+1$-th coordinate of $Y_k$. Now we will compute $\Delta(\phi_k)(Y_k)$ and $\partial_{n+1}(\phi_k)(Y_k)$.\\
Since $\tilde{\phi}$ is smooth, for any $Y$ in a neighborhood of $Y_k$, there exists a unique $X=X(Y)$ such that
\begin{equation}\label{}
Y=X-\epsilon_k \tilde{\phi}_k(X)e_n.
\end{equation}
Thus \eqref{phi_k} reads as
\[
\phi_k(Y)=U(X(Y)),
\]
with $Y_i=X_i$ if $i \neq n$ and when $j \neq n$,
\[
\frac{\partial X_j}{\partial Y_i}=\delta_{ij}.
\]
Then
\begin{equation}\label{}
D_XY=I-\epsilon_kD_X(\tilde{\phi}_k(X)e_n),
\end{equation}
and
\begin{equation}\label{}
D_YX=I+\epsilon_kD_X(\tilde{\phi}e_n)+O(\epsilon_k^2),
\end{equation}
since
\[
\tilde{\phi}_k=\tilde{\phi}+c_k.
\]
It follows that 
\begin{equation}\label{X_nY_j}
\frac{\partial X_n}{\partial Y_j}=\delta_{jn}+\epsilon_k\partial_j\tilde{\phi}(X)+O(\epsilon_k^2).
\end{equation}
Then we can compute
\begin{equation}\label{Deltaphi_k}
\begin{aligned}
\Delta \phi_k(Y)&=U_n(X)\Delta X_n(Y)\\
&+\sum_{j\neq n}(U_{jj}(X)+2U_{jn}\frac{\partial X_n}{\partial Y_j})\\
&+U_{nn}(X)|\nabla X_n|^2(Y).
\end{aligned}
\end{equation}
By \eqref{X_nY_j}, we can calculate
\[
|\nabla X_n|^2(Y)=1+2\epsilon_k\partial_n\tilde{\phi}(X)+O(\epsilon_k^2),
\]
and
\begin{equation}\label{X_n^2}
\begin{aligned}
\frac{\partial^2 X_n}{\partial Y_j^2}&=\epsilon_k\sum_{i}{\partial_{ji}\tilde{\phi}\frac{\partial X_i}{\partial Y_j}}+O(\epsilon_k^2)\\
&=\epsilon_k\sum_{i \neq n}{\partial_{ji}\tilde{\phi}\delta_{ij}}+\epsilon_k\partial_{jn}\tilde{\phi}\frac{\partial X_n}{\partial Y_j}+O(\epsilon_k^2).
\end{aligned}
\end{equation}
Then
\begin{equation}\label{DeltaX_n}
\Delta X_n=\epsilon_k\Delta \tilde{\phi}+O(\epsilon_k^2).
\end{equation}
Using \eqref{DeltaX_n} and \eqref{X_n^2} in \eqref{Deltaphi_k}, we can get
\begin{equation}\label{Deltaphi_kY}
\Delta \phi_k(Y)=\Delta U(X)+\epsilon_kU_n\Delta \tilde{\phi}+2\epsilon_k\nabla \tilde{\phi}\cdot \nabla U_n+O(\epsilon_k^2)(U_nn+2\sum_{j \neq n}{U_{jn}}).
\end{equation}
And we can also calculate that
\begin{equation}\label{}
\begin{aligned}
(\phi_k)_{n+1}(Y)&=U_n(X)\frac{\partial X_n}{\partial Y_{n+1}}+U_z(X)\frac{\partial X_{n+1}}{\partial Y_{n+1}}\\
&=U_n(X)(\epsilon_k\partial_{n+1}\tilde{\phi}(X)+O(\epsilon_k^2))+U_z(X).
\end{aligned}
\end{equation}
Plug in \eqref{Deltaphi_kY}, and $\Delta U(X_k)+\frac{\alpha}{z}U_z(X_k)=0$ to \eqref{Deltaphi_kY_k}, we can calculate that
\begin{equation}\label{}
\begin{aligned}
\epsilon_k(U_n\Delta \tilde{\phi}+2\nabla \tilde{\phi}\nabla U_n+\Delta U_n \tilde{\phi}+\frac{\alpha}{z_k}U_n \partial_{n+1}\tilde{\phi}+\frac{\alpha}{z_k}(U_n)_z\tilde{\phi})+O(\epsilon_k^2)\leq 0,
\end{aligned}
\end{equation}
which means
\[
\Delta(U_n\tilde{\phi})(X_k)+\frac{\alpha}{z_k}\partial_z(U_n\tilde{\phi})(X_k)+O(\epsilon_k)\leq 0.
\]
And the desired \eqref{U_ntildephi} follows as $k \to \infty$.\\

The next step is to show that $\tilde{g}_{\infty}$ solves
\begin{equation}\label{dzg_infty}
\lim_{z\to 0}{z^{\alpha}\partial_z\tilde{g}_{\infty}}=0 \ \ \text{on} \ \ \{x_n>0\}\cap B_1^n.
\end{equation}
Since $\phi_k$ touches $g_k$ by below at $Y_k$ and $g_k$ solves \eqref{dzg_infty}, so
\[
\lim_{z\to 0}{z^{\alpha}\partial_z\phi_k(Y_k)}\geq \gamma \phi_k^{\gamma-1}(Y_k),
\]
and by the calculation in the previous part,
\[
\partial_z\phi_k(Y_k)=U_n(X)(\epsilon_k\partial_{n+1}\tilde{\phi}(X_k)+O(\epsilon_k^2))+U_z(X_k),
\]
therefore,
\begin{equation}\label{}
\begin{aligned}
\gamma \phi_k^{\gamma-1}(Y_k) & \leq \partial_z\phi_k(Y_k)\\
&=z^{\alpha}U_n \partial_{n+1}\tilde{\phi}(X_k) \epsilon_k+O(\epsilon_k^2)U_n(X_k)+z^{\alpha}\partial_zU(X_k).
\end{aligned}
\end{equation}
Since 
\[
\phi_k(Y_k)=U(X_k)
\]
as defined and $U$ satisfies
\[
\lim_{z\to 0}{z^{\alpha}\partial_zU}=\gamma U^{\gamma-1},
\]
we can show 
\[
\epsilon_k U_n z^{\alpha}\partial_{n+1}\tilde{\phi}(X_k)+O(\epsilon_k^2)U_n(X_k) \geq 0
\]
and thus
\[
z^{\alpha}\partial_{n+1}\tilde{\phi}(X_k)\geq 0.
\]
Here we use $U_n$ is strictly monotonuous increasing in the $e_n$-direction in $B_1^{n+1}\cap \{y\geq 0\} \setminus P$. Since $\tilde{\phi}_k=\tilde{\phi}+c_k$ touches $\tilde{g}_k$ by below, letting $k \to \infty$, we can prove that $\tilde{g}_{\infty}$ solves \eqref{dzg_infty} on $\{x_n>0\}\cap B_1^n$.\\

Then we want to show that $\tilde{g}_{\infty}$ solves 
\begin{equation}\label{}
|\nabla_r\tilde{g}_{\infty}|(X_0)=0, X_0=(x_0',0,0)\in B_{1/2}^n \cap L.
\end{equation}
 Assume by contradiction, there exists $\psi$ touching by below at $X_0$ and 
\[
\psi(X)=\psi(X_0)+a(X_0)(x'-x_0')+b(X_0)r+O(|x'-x_0'|^2+r^{1+l})
\]
for some $l>0$ and $b(X_0)>0$. Then there exists $\theta, \delta, \bar{r}$ and $Y'=(y_0',0,0)\in B_2$ depending on $\psi$ such that
\[
q(X)=\psi(X_0)-\frac{\theta}{2}|x'-y_0'|^2+2\theta(n-1)x_n r
\]
which is a second order polynomial touches $\psi$ by below at $X_0$, in a neighborhood $N_{\bar{r}}=\{|x'-x_0'|\leq \bar{r},r\leq \bar{r}\}$ of $X_0$. And $\psi-q\geq \delta>0$ on $N_{\bar{r}}/N_{\bar{r}/2}$. Then we can see
\[
\tilde{g}_{\infty}-q\geq \delta>0 \ \ \text{on}\ \ N_{\bar{r}}\setminus N_{\bar{r}/2},
\]
and
\[
\tilde{g}_{\infty}(X_0)-q(X_0)=0.
\]
In particular,
\[
|\tilde{g}_{\infty}(X_k)-q(X_k) \to 0, X_k\in N_{\bar{r}}\setminus \{x_n\leq 0,z=0\}, X_k\to X_0.
\]
Now choose $R_k=\frac{1}{\theta \epsilon_k}$ and and define
\[
w_k(X)=v_{R_k}(X_Y'+\epsilon_k\psi(X_0)e_n), Y=(y_0',0,0),
\]
with $v_R$ defined in \eqref{defv_R}. Then the $\epsilon_k$ domain variation of $w_k$ can be defined by
\[
w_k(X-\epsilon_k \tilde{w}_k(X)e_n)=U(X),
\]
and since $U$ is invariant in $x'$-direction, this is equivalent to
\[
v_{R_k}(X-Y'+\epsilon_k\psi(X_0)e_n-\epsilon_k \tilde{w}_k(X)e_n)=U(X-Y').
\]
Proposition \ref{proptildev_R} tells that
\[
\tilde{v}_{R_k}(X-Y')=\epsilon_k(\tilde{w}_k(X)-\psi(X_0)).
\]
Then we can conclude from \eqref{tildev_R2} that
\[
\tilde{w}_k(X)=q(X)+\theta^2\epsilon_kO(|X-Y'|^2),
\]
and hence
\[
|\tilde{w}_k-q|\leq C\epsilon_k  \ \ \text{on}\ \ N_{\bar{r}}\setminus \{x_n\leq 0,z=0\}
\]
Thus from the uniform convergence of $A_k$ to $A_{\infty}$, we get for $k$ large enough,
\begin{equation}\label{g_kw_k}
\tilde{g}_k-\tilde{w}_k\geq \delta/2  \ \ \text{on}\ \ (N_{\bar{r}}\setminus N_{\bar{r}/2})\setminus \{x_n\leq 0,z=0\}.
\end{equation}
And similarly we can get
\[
\tilde{g}_k(X_k)-\tilde{w}_k(X_k)\leq \delta/4,
\]
for some sequence $X_k \in N_{\bar{r}}\setminus \{x_n\leq 0,z=0\}$, and $X_k \to X_0$.\\
However from Lemma \ref{evariationcomp} and \eqref{g_kw_k}, we can see
\[
\tilde{g}_k-\tilde{w}_k\geq \delta/2 \ \ \text{on}\ \ N_{\bar{r}}/\{x_n\leq 0,z=0\}
\]
which leads to contradiction.\\
We complete the proof of Lemma \ref{ginfty} that $\tilde{g}_{\infty}$ solves the linearized problem \eqref{linearizedeq} in $(B_{1/2}^{n+1})^+$.
\end{proof}

We require regularity of the solutions to the linearized problem \eqref{linearizedeq} (in Section \ref{linearizedregularity}) to finish the proof of Theorem \ref{Improvement} in Section \ref{pfofmain}, and then the proof of the Main Theorem follows in that section.

\section{The regularity of linearized problem}\label{linearizedregularity}
In this section, our aim is to prove the regularity results for $w$ solving the linearized equation in the case $\gamma$ is small enough.
\begin{equation}\label{linearized2}
\begin{cases} 
div(y^{\alpha}\nabla((U_{\gamma})_nw))= 0 \ \ \text{in} \ \ (B^{n+1}_1)^+ ,\\
|\nabla_rw|(X_0)=0 \ \  \text{on}\ \ B_1^n \cap L,\\
\lim_{y\to 0^+}{y^{\alpha}\partial_yw(x,y)}=0 \ \ \text{on} \ \ B_1^n\cap \{x_n>0\}.
\end{cases}
\end{equation}
Here we denote the function $U_{\gamma}$ as the extension of $(x_n)_{+}^{\beta}$ to upper half space $(\mathbb{R}^{n+1})^+$, and the exponent $\beta=\frac{2s}{2-\gamma}$ depends on $\gamma$.\\
 
The following is the main theorem of this section.
\begin{theorem}\label{linearized}
There exists $\gamma_0>0$, such that for all $0<\gamma<\gamma_0$, the following regularity results hold.\\

Given a boundary data $\bar{h}\in C((\partial B_1^{n+1})^+)$, $|\bar{h}|\leq 1$, then there exists a unique classical solution $h$ to \eqref{linearized2} such that $h\in C(\overline{(B_1^{n+1})^+})$, $h=\bar{h}$ on $(\partial B_1^{n+1})^+$, and it satisfies
\begin{equation}\label{hX}
|h(X)-h(X_0)-a'\cdot(x'-x_0')|\leq C(|x'-x_0'|^2+r^{1+\theta}), X_0\in B_{1/2}^{n+1}\cap L,
\end{equation}
for universal constants $C,\theta$ and a vector $a'\in \mathbb{R}^{n-1}$ depending on $X_0$.
\end{theorem}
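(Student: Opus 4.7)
The plan is to handle existence and uniqueness by a standard weighted variational argument, obtain interior and ``free Neumann'' regularity from classical degenerate elliptic theory, and then deduce the delicate pointwise expansion \eqref{hX} at points of $L$ by a compactness/perturbation argument from the $\gamma=0$ case, which is already treated in \cite{D1}.

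\textbf{Existence and uniqueness via reformulation.} A direct computation, using that $\operatorname{div}(y^{\alpha}\nabla U_{\gamma})=0$ implies $\operatorname{div}(y^{\alpha}\nabla (U_{\gamma})_{n})=0$, shows that on $\{(U_{\gamma})_{n}>0\}$ the equation in \eqref{linearized2} is equivalent to the symmetric divergence form
\[
\operatorname{div}\bigl(y^{\alpha}(U_{\gamma})_{n}^{2}\,\nabla w\bigr)=0.
\]
The weight $y^{\alpha}(U_{\gamma})_{n}^{2}$ is nonnegative, smooth on $\{x_{n}>0,\,y>0\}$, and near $L\cup\{y=0\}$ behaves like $y^{\alpha}(x_{n})_{+}^{2(\beta-1)}$; for $\gamma$ small enough $2(\beta-1)>-1$ and $|\alpha|<1$, so this weight lies in the Muckenhoupt class $A_{2}$ with constants that are stable as $\gamma\to 0$. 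Then Lax--Milgram on the weighted space $H^{1}(y^{\alpha}(U_{\gamma})_{n}^{2},(B_{1}^{n+1})^{+})$ with Dirichlet datum $\bar h$ on $(\partial B_{1}^{n+1})^{+}$ produces a unique energy solution $h$, and weighted trace theory gives $h\in C(\overline{(B_{1}^{n+1})^{+}})$ under the standing continuity assumption on $\bar h$.

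\textbf{Interior regularity and the Neumann face.} Away from $L$ the reformulated weight is uniformly $A_{2}$-elliptic, so the Fabes--Kenig--Serapioni theory yields interior local H\"older and $C^{1,\theta}$ estimates; on the relative boundary $\{x_{n}>0,y=0\}$ the homogeneous condition $\lim_{y\to 0^{+}} y^{\alpha}\partial_{y} w = 0$ allows an even reflection across $y=0$ that preserves the equation, so interior-type estimates extend up to this face. This gives $h\in C_{\mathrm{loc}}^{1,1}((B_{1}^{n+1})^{+}\setminus L)$ together with uniform H\"older control up to $L$; the maximum principle for the weighted equation in Step 1 supplies $\|h\|_{L^{\infty}}\le\|\bar h\|_{L^{\infty}}\le 1$.

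\textbf{Pointwise expansion at $L$ by compactness.} This is the main obstacle. The desired expansion \eqref{hX} is proved in \cite{D1} when $\gamma=0$, where $(U_{0})_{n}^{2}\sim(x_{n})_{+}^{2s-2}$ and the operator is exactly the one treated there. For $0<\gamma<\gamma_{0}$ I argue by contradiction: if the claim fails with some uniform constants $C_{k}\to\infty$ or $\theta_{k}\to 0$, there are sequences $\gamma_{k}\to 0$, boundary data $\bar h_{k}$ with $|\bar h_{k}|\le 1$, associated solutions $h_{k}$, and points $X_{0}^{k}\in B_{1/2}^{n+1}\cap L$ at which no affine-in-$x'$ approximation achieves the required remainder. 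Using the uniform H\"older estimate of the previous step, together with the smooth convergence $U_{\gamma_{k}}\to U_{0}$ (hence $(U_{\gamma_{k}})_{n}\to (U_{0})_{n}$) on compact subsets of $\overline{(B_{1}^{n+1})^{+}}\setminus L$, I extract a subsequence of suitable rescalings of $h_{k}$ that converges locally uniformly and weakly in the weighted $H^{1}$ norm to a limit $h_{\infty}$, which solves the linearized problem for $\gamma=0$. Applying the expansion from \cite{D1} to $h_{\infty}$ and passing to the limit in the rescaled hypothesis then produces a contradiction. The subtle point in this step is to check that the ingredients needed for compactness---the boundary H\"older exponent, the $A_{2}$ constant of $y^{\alpha}(U_{\gamma})_{n}^{2}$, and the convergence of the associated Dirichlet forms---all have bounds independent of $\gamma\in(0,\gamma_{0})$. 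Since the $\gamma$-dependence enters only through $\beta=2s/(2-\gamma)$, which depends smoothly on $\gamma$ and tends to $s$, these uniform bounds follow by continuity after choosing $\gamma_{0}$ sufficiently small.
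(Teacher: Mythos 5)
Your overall strategy---variational existence/uniqueness, interior regularity by degenerate-elliptic theory, and a compactness reduction of the key expansion \eqref{hX} to the $\gamma=0$ case treated in \cite{D1}---is the same as the paper's. The paper phrases the compactness step a bit differently (it uses the fact, from \cite{D1} Section 6, that being a solution of \eqref{linearized2} is equivalent to minimizing $\int y^{\alpha}(U_{\gamma})_n^2|\nabla w|^2$, and then passes to the $\gamma=0$ limit through this variational characterization), but that is essentially your Lax--Milgram plus Dirichlet-form convergence argument in different clothing. So the plan is sound.

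There is, however, a concrete technical error in your second step. You claim that near $L\cup\{y=0\}$ the weight behaves like $y^{\alpha}(x_n)_+^{2(\beta-1)}$ and that ``for $\gamma$ small enough $2(\beta-1)>-1$,'' which you then use to place the weight in $A_2$. The inequality $2(\beta-1)>-1$ is equivalent to $\beta>1/2$, i.e. $\gamma>2-4s$; for $s\le 1/2$ this fails precisely when $\gamma$ is small, which is the regime you are in. Moreover the tensor-product approximation is itself misleading near $L$: the function $(U_{\gamma})_n$ is homogeneous of degree $\beta-1$ in the two-dimensional radial variable $r=\sqrt{x_n^2+y^2}$ away from $L$ and does not degenerate as $(x_n)_+^{\beta-1}$ along slices $\{y=y_0>0\}$, so treating $y^{\alpha}$ and $(x_n)_+^{2(\beta-1)}$ as independent one-dimensional Muckenhoupt weights gives the wrong condition. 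The correct book-keeping near the codimension-two edge $L$ gives the radial exponent $\alpha+2(\beta-1)=2\beta-1-2s\to -1$ as $\gamma\to 0$, which is compatible with $A_2$ for a codimension-two singularity; but one also needs to check the angular factor (which vanishes like $(\pi-\theta)^{2s}$ on the $\{u=0\}$ side). This is the part you need to redo carefully; the conclusion you want can still be salvaged, but not by the split-weight argument you wrote.
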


A corollary of the theorem above is what we require in the proof of the Theorem \ref{main}.

\begin{corollary}\label{linearizedcor}
There exists a universal constant $C$ such that if $w$ is a viscosity solution to \eqref{linearized2}, with
\[
-1\leq w(X)\leq 1 \ \ \text{in} \ \ (B^{n+1}_1)^+,
\]
then
\[
a_0\cdot x'-C|X|^{1+\theta}\leq w(X)-w(0)\leq a_0\cdot x'+C|X|^{1+\theta},
\]
for some vector $a_0\in \mathbb{R}^{n-1}$.
\end{corollary}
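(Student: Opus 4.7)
The corollary is essentially a translation of the expansion \eqref{hX} of Theorem~\ref{linearized} at the origin into a slightly coarser form in which $|x'|^2$ and $r^{1+\theta}$ are both absorbed into $|X|^{1+\theta}$. My plan is the following.

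First, I would identify $w$ with the classical solution $h$ provided by Theorem~\ref{linearized}. Precisely, after a harmless rescaling so that $w$ is defined and continuous on the closure of a slightly smaller half-ball, I would apply Theorem~\ref{linearized} with boundary data $\bar{h}$ equal to the restriction of $w$ to the top spherical boundary; the uniqueness clause (together with the fact that bounded viscosity solutions to \eqref{linearized2} must coincide with the classical solution sharing their boundary values, which is built into the solution theory of the linearized problem) forces $w$ to equal the $h$ produced by the theorem, so the expansion \eqref{hX} is inherited by $w$.

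Next, I would apply \eqref{hX} at $X_0 = 0 \in B_{1/2}^{n+1} \cap L$. This produces $a' \in \mathbb{R}^{n-1}$ and a universal constant $C$ with
\[
|w(X) - w(0) - a' \cdot x'| \le C\bigl(|x'|^2 + r^{1+\theta}\bigr), \qquad r = \sqrt{x_n^2 + y^2}.
\]
Since $|X| \le 1$ and $\theta \in (0,1]$, both $|x'|^2 \le |X|^2 \le |X|^{1+\theta}$ and $r^{1+\theta} \le |X|^{1+\theta}$, so the right-hand side is at most $2C|X|^{1+\theta}$. Taking $a_0 = a'$ and enlarging the constant if necessary gives the two-sided inequality asserted in the corollary.

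There is no genuine obstacle: all the substance is carried by Theorem~\ref{linearized}, and the proof of the corollary reduces to specializing the expansion at $X_0 = 0$ and using the trivial bookkeeping estimate on $|x'|^2 + r^{1+\theta}$ in terms of $|X|^{1+\theta}$. The only mildly delicate point is the identification of the viscosity solution $w$ with the classical solution furnished by the theorem, which rests on the uniqueness clause and the interior $C^{1,1}_{\mathrm{loc}}$ regularity that is already part of the definition of a viscosity solution to \eqref{linearized2}.
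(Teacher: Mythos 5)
Your proposal is correct and matches the paper's (unwritten) intent: the paper merely asserts ``Theorem~\ref{linearized} is proved and Corollary~\ref{linearizedcor} follows,'' and the content of that assertion is precisely what you spell out --- specialize \eqref{hX} at $X_0=0$, then absorb $|x'|^2\le|X|^2\le|X|^{1+\theta}$ and $r^{1+\theta}\le|X|^{1+\theta}$ (valid since $|X|\le1$ and $\theta\le1$) at the cost of doubling the constant. You are also right to flag the identification of the viscosity solution $w$ with the classical solution $h$ via the uniqueness clause of Theorem~\ref{linearized} as the one nontrivial step that the paper leaves implicit; that is the same gap the paper glosses over, and your treatment of it is as precise as the paper's.
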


From Corollary \ref{linearizedcor}, there exists $\rho>0$,  if $w$ is a viscosity solution to \eqref{linearized2}, with $w(0)=0$ and
\[
-1\leq w(X)\leq 1 \ \ \text{in} \ \ (B^{n+1}_1)^+,
\]
then
\begin{equation}\label{defofrho}
a_0\cdot x'-\frac{1}{8}\rho \leq w(X)\leq a_0\cdot x'+\frac{1}{8}\rho, \ \ \text{in} \ \ (B_{2\rho}^{n+1})+ 
\end{equation}
for some vector $a_0\in \mathbb{R}^{n-1}$.

The proof of Theorem \ref{linearized} is based on method of compactness. In paper \cite{D1} section 6, Theorem 6.1 states the same results for the linearized problem of the limiting case $\gamma=0$. In the $\gamma=0$ case, $w$ solves
\begin{equation}\label{linearized0}
\begin{cases} 
div(y^{\alpha}\nabla((U_0)_nw))= 0 \ \ \text{in} \ \ (B^{n+1}_1)^+ ,\\
|\nabla_rw|(X_0)=0 \ \  \text{on}\ \ B_1^n \cap L,\\
\lim_{y\to 0^+}{y^{\alpha}\partial_y((U_0)_nw(x,y))}=0 \ \ \text{on} \ \ B_1^n\cap \{x_n>0\}.
\end{cases}
\end{equation}
with $U_0(X)=U_0(x_n,y)=(r^{1/2}\cos{(\theta/2}))^{2s}$, $r^2=x_n^2+y^2$. The regularity is stated same in Theorem \ref{linearized}. Our aim is to use method of compactness to prove Theorem \ref{linearized} for $0<\gamma<\gamma_0$ small enough.
\begin{proof}
If not, then there exists a sequence $\gamma_k\to0$ such that given boundary data $\bar{h}$ and $|\bar{h}|\leq 1$, $w_k$ solves \eqref{linearized2} for $\gamma=\gamma_k$ with boundary data $\bar{h}$, and for any $a'\in \mathbb{R}^{n-1},$ and for any $C>0, \theta>0$, there exists $X_k, \tilde{X}_k \in B_{1/2}^n \cap L$, such that
\[
|w_k(\tilde{X}_k)-w_k(X_k)-a'(x_k'-\tilde{x}_k')|>C(|x_k'-\tilde{x}_k'|^2+r^{1+\theta}).
\]
Consider the limit of its subsequence (denoted as $\gamma_k$, $X_k$, and $\tilde{X}_k$ as well), that $\tilde{X_k} \to \tilde{X}_0$, $X_k\to X_0$, and $w_k \to w_0$. Then $w_0=\bar{h}$ on $(\partial B_1^{n+1})^+$ and for any $a'\in \mathbb{R}^{n-1},$ and for any $C>0, \theta>0$,
\[
|w_0(\tilde{X}_0)-w_0(X_0)-a'(x_0'-\tilde{x}_0')|>C(|x_0'-\tilde{x}_0'|^2+r^{1+\theta}).
\]
Now we want to prove the limit $w_0$ solves \eqref{linearized0}, and then it leads to contradiction.\\
Let 
\[
J(w)=\int_{(B_1^{n+1})^+}{y^{\alpha}U_n^2|\nabla w|^2}dX.
\]
Then as proved in section 6 in \cite{D1}, the minimizer of the energy $J$ solves 
\begin{equation}\label{minJ}
div(y^{\alpha}U_n^2\nabla w)= 0 \ \ \text{in} \ \ (B^{n+1}_1)^+ ,\\
\end{equation}
and \eqref{minJ} is equivalent to
\begin{equation}\label{mineqn}
div(y^{\alpha}\nabla(U_nw))= 0 \ \ \text{in} \ \ (B^{n+1}_1)^+.\\
\end{equation}
Moreover, it is proved that if $w$ solves \eqref{mineqn}, and 
\[
\lim_{r \to 0}w_r(x',x_n,y)=b(x'), \ \ \text{on} \ \ L\cap B_1^n,
\]
then $w$ is a minimizer of $J(w)$ is equivalent to $b=0$. \\

Therefore, let $w_k$ be the solution to \eqref{linearized2} for $\gamma=\gamma_k$. Then $w_k$ is a minimizer of $J_{\gamma_k}(w)=\int_{(B_1^{n+1})^+}{y^{\alpha}(U_{\gamma_k})_n^2|\nabla w|^2}dX$, and $w_k$ satisfies
\[
\lim_{y\to 0^+}{y^{\alpha}\partial_y((U_{\gamma_k})_nw_k(x,y))}=w_k(x,0) \lim_{y\to 0^+}{y^{\alpha}\partial_y(U_{\gamma_k})_n}.
\]
This equality is derived from $\lim_{y\to 0^+}{y^{\alpha}\partial_yw_k(x,y)}=0$.\\

Let
\[
J_0(w)=\int_{(B_1^{n+1})^+}{y^{\alpha}(U_0)_n^2|\nabla w|^2}dX.
\]
Since we have $\lim_{\gamma \to 0}{U_{\gamma}} = U_0$ in $C((B^{n+1}_1)^+)$, thus if $w_k$ is a minimizer of $J_k(w)$, then $w_0=\lim_{\gamma_k \to 0}{w_{k}}$ is a minimizer of $J_0$. And by the convergence,
\[
(w_0)_r(x',x_n,y)=0
\]
Moreover, since $w_k \to w_0$,
\[
\lim_{y\to 0^+}{y^{\alpha}\partial_y((U_{\gamma_k})_nw_k(x,y))}=w_k(x,0) \lim_{y\to 0^+}{y^{\alpha}(U_{\gamma_k})_n},
\]
and
\[
\lim_{y\to 0^+}{y^{\alpha}\partial_y(\lim_{\gamma_k \to 0}{U_{\gamma_k}})_n}=\lim_{y\to 0^+}{y^{\alpha}\partial_y(U_{0})_n}=0,
\]
we can prove
\[
\lim_{y\to 0^+}{y^{\alpha}\partial_y((U_0)_nw_0(x,y))}=0.
\]
Therefore, we showed that the limit $w_0$ solves \eqref{linearized0}, which leads to contradiction. And Theorem \ref{linearized} is proved and Corollary \ref{linearizedcor} follows.
\end{proof}

\section{Proof of the main Theorem}\label{pfofmain}
In this section, we apply the regularity results of linearized problem \eqref{linearizedeq} to prove Theorem \ref{Improvement}. And then the proof of Main Theorem simply follows by Theorem \ref{Improvement} and Lemma \ref{flatness}.

\begin{proof}[Proof of Theorem \ref{Improvement}]
Let $\rho$ be the universal constant in \eqref{defofrho}, and assume by contradiction that there exists $
\epsilon_k \to 0$ and a sequence of solutions $g_k$ to \eqref{fbeq} such that $g_k$ satisfies
\begin{equation}\label{}
U(X-\epsilon_ke_n)\leq g_k(X)\leq U(X+\epsilon_ke_n) \ \ \text{in} \ \ (B^{n+1}_1)^+,
\end{equation}
but it does not satisfies the conclusion of the Theorem \ref{Improvement}.\\
Denote $\tilde{g}_k$ be the $\epsilon_k$-domain variation of $g_k$. Then by Lemma \ref{L2} the sequence of sets
\[
A_k:=\{(X,\tilde{g}_k(X)):X\in (B_{1-\epsilon_k}^{n+1})^+\},
\]
converges uniformly to
\[
A_{\infty}=\{(X,\tilde{g}_{\infty}(X)):X\in (B_{1/2}^{n+1})^+\},
\]
where $\tilde{g}_{\infty}$ is a H\"{o}lder continuous function. By Lemma \ref{ginfty}, the function $\tilde{g}_{\infty}$ solves the linearized equation \eqref{linearizedeq}, and hence by Corollary \ref{linearizedcor},
\[
a_0\cdot x'-\rho/8\leq \tilde{g}_{\infty} \leq a_0\cdot x'+\rho/8 \ \ \text{in} \ \ (B^{2\rho}_1)^+,
\]
with $a_0 \in \mathbb{R}^{n-1}$. From the uniform convergence of $A_k$ to $A_{\infty}$, we get that for all $k$ large enough,
\[
a_0\cdot x'-\rho/4 \leq \tilde{g}_{\infty} \leq a_0\cdot x'+\rho/4 \ \ \text{in} \ \ (B^{2\rho}_1)^+,
\]
and by Lemma \ref{L1}, $g_k$ satisfies \eqref{improv2}, which leads to a contradcition.
\end{proof}

\section{Appendix}
Let $U(t,z)=r^{\beta}g(\theta)\geq 0$, $r=\sqrt{t^2+z^2}$, $t=r\cos{\theta}$ and $z=r\sin{\theta}$, with $\theta \in [0,\pi]$. Since $div(z^{\alpha}\nabla U)=0$, and $\lim_{z\to 0}{z^{\alpha} \partial_zU(t,z)}=\gamma U^{\gamma-1}(t,0)$, so $g(\theta)$ solves the ODE
\begin{equation}\label{ODEg}
\begin{aligned}
g''(\theta)+\alpha \cot \theta g'(\theta)+\beta(\alpha+\beta)g(\theta)=0,
\end{aligned}
\end{equation}
with $g(\pi)=0$, $g(0)=1$, and $g(\theta)=1+\gamma (sin\theta)^{2s}+o((sin\theta)^{2s})$ as $\theta \to 0$. \\

\subsection{}\label{U_tU}  
In the first part, we try to prove the following inequalty:
\[
r\frac{\partial_tU(t,z)}{U(t,z)}\geq C>0.
\]
Calculate that
\[
\frac{U_t}{U}=\frac{1}{r}(\beta \cos{\theta}-\frac{g'(\theta)\sin{\theta}}{g(\theta)})=:\frac{1}{r}f(\theta).
\]
We define
\begin{equation}\label{fdef}
\begin{aligned}
f(\theta)=\beta \cos{\theta}-\frac{g'(\theta)\sin{\theta}}{g(\theta)},
\end{aligned}
\end{equation}
and then
\begin{equation}\label{f'def}
\begin{aligned}
f'(\theta)=\frac{1}{\sin{\theta}}[(f(\theta)-(\beta-s)\cos{\theta})^2+(\beta-s)^2\sin^2{\theta}-s^2].
\end{aligned}
\end{equation}
We can calculate $f(0)=\beta$ since
\begin{equation}\label{g'sin/gat0}
\begin{aligned}
\lim_{\theta \to 0}{\frac{g'(\theta)\sin{\theta}}{g(\theta)}}=\lim_{\theta \to 0}{\frac{g(\theta)-g(0)}{g(0)+\gamma (\sin{\theta})^{2s}}}=0,
\end{aligned}
\end{equation}
and $f(\pi)=2s-\beta>0$ since
\begin{equation}\label{g'sin/gatpi}
\begin{aligned}
\lim_{\theta \to \pi}{\frac{g'(\theta)\sin{\theta}}{g(\theta)}}&=\lim_{\theta \to \pi}{g'(\theta)(\sin{\theta})^{\alpha}\frac{(\sin{\theta})^{2s}}{g(\theta)}}\\
&=\lim_{\theta \to \pi}{g'(\theta)(\sin{\theta})^{\alpha}}\lim_{\theta \to \pi}{\frac{2s(\sin{\theta})^{2s-1}\cos{\theta}}{g'(\theta)}}\\
&=-2s.
\end{aligned}
\end{equation}
And to notice, $g'(\theta)(\sin{\theta})^{\alpha}$ is bounded and proof is given in \eqref{g'sinalpha}.
Also, we can calculate that $f'(0)=0$ and $f'(\pi)=0$ by 
\[
\lim_{\theta\to 0}{f'(\theta)}=\lim_{\theta\to 0}{\frac{2f f'-2(\beta-s)\cos{\theta} f'+2(\beta-s)\sin{\theta} f}{\cos{\theta}}}=\lim_{\theta\to 0}{2sf'(\theta)},
\]
and similarly
\[
\lim_{\theta\to \pi}{f'(\theta)}=\lim_{\theta\to 0}{-2sf'(\theta)}.
\]
Now we want to prove that $f(\theta)\geq C>0$ for $\theta\in [0,\pi]$. If not, then with the information of $f$ and $f'$ at the end points, there exists at least one $\theta_0 \in (0,\pi) $ such that
\begin{equation*}
\begin{cases} 
f'(\theta_0)=0,\\
f(\theta_0)\leq 0\\
f''(\theta_0)>0.
\end{cases}
\end{equation*}
Since $f'(\theta_0)=0$, 
\[
f(\theta_0)^2-2(\beta-s)\cos{\theta_0}f(\theta_0)+(\beta-s)^2-s^2=0,
\]
and thus
\[
f(\theta_0)=(\beta-s)\cos{\theta_0}\pm\sqrt{s^2-(\beta-s)^2\sin^2{\theta_0}}.
\]
If is the plus sign, then
\[
f(\theta_0)>(\beta-s)\cos{\theta_0}+(\beta-s)|\cos{\theta_0}|\geq 0
\]
which is not right. Thus
\[
f(\theta_0)=(\beta-s)\cos{\theta_0}-\sqrt{s^2-(\beta-s)^2\sin^2{\theta_0}}.
\]
Then we can calcuate $f''(\theta)$ at $\theta_0$, that
\[
f''(\theta)=\frac{(2ff'-2(\beta-s)\cos{\theta}f'+2(\beta-s)\sin{\theta}f)\sin{\theta}-(f' \sin{\theta})\cos{\theta}}{\sin^2{\theta}}.
\]
And when $\theta=\theta_0$,
\[
0<\sin^2{\theta}f''(\theta_0)=2(\beta-s)\sin^2{\theta_0}f(\theta_0)< 0,
\]
which leads to a contradiction.\\

\subsection{}\label{U_t/U<C}
In this section we try to prove
\[
|U_{t}(\frac{\tau}{r},\frac{z}{r})|\leq K_2U(\frac{\tau}{r},\frac{z}{r}).
\]
with $(\frac{\tau}{r},\frac{z}{r}) \in B_{3/2}^+/B_{1/2}^+$. Let $\theta=arctan(\frac{z}{\tau}) \in [0,\pi]$. Since $U$ is homomgeneous of degree $\beta$, we can see
\[
\frac{U_t(\frac{\tau}{r},\frac{z}{r})}{U(\frac{\tau}{r},\frac{z}{r})}=\frac{r}{\sqrt{\tau^2+z^2}}(\beta \cos{\theta}-\frac{g'(\theta)\sin{\theta}}{g(\theta)})\leq 2f(\theta)
\]
with 
\[
f(\theta)=\beta \cos{\theta}-\frac{g'(\theta)\sin{\theta}}{g(\theta)},
\]
which is the same definition as in \eqref{fdef}. As calculated in the previous section, $f(0)=\beta$, $f(\pi)=2s-\beta<\beta$, $f(\theta)\geq C>0$, and
\[
f'(\theta)=f'(\theta)=\frac{1}{\sin{\theta}}[(f(\theta)-(\beta-s)\cos{\theta})^2+(\beta-s)^2\sin^2{\theta}-s^2].
\]
Then if there exists $\theta_0$ such that $f(\theta_0)=+\infty$, then $f'=+\infty$ and will never be negative infinity at such $\theta_0$, which will lead to a contradiction of $f(0)=\beta$, $f(\pi)=2s-\beta<\beta$ and $\theta \in [0,\pi]$ which is a bounded interval. Therefore, there must exists an upper bound for $f(\theta)$ and then we can prove
\[
|\frac{U_t(\frac{\tau}{r},\frac{z}{r})}{U(\frac{\tau}{r},\frac{z}{r})}|\leq K_2.
\]

\subsection{}\label{U_ttU}
We try to prove 
\begin{equation}\label{U_tt/U}
|\frac{U_{tt}(t,z)}{U(t,z)}|\leq \frac{C(s,\gamma)}{r^2}.
\end{equation}
Write $U(t,z)=r^{\beta}g(\theta)$, where $t=r\cos{\theta}$, $z=r\sin{\theta}$ and $r=\sqrt{t^2+z^2}$. Then
\[
U_t=r^{\beta-2}(\beta g(\theta)t-g'(\theta)z), 
\]
and
\[
U_{tt}=r^{\beta-4}(((\beta^2-\beta)t^2+\beta z^2)g(\theta)+(2-2\beta)tzg'(\theta)+z^2g''(\theta)).
\]
Then
\[
r^2\frac{U_{tt}}{U}=(\beta^2-\beta)\cos^2{\theta}+\beta \sin^2{\theta}+\frac{g'(\theta)}{g(\theta)}(2-2\beta)\sin{\theta}\cos{\theta}+\frac{g''(\theta)}{g(\theta)}\sin^2{\theta}=:F(\theta).
\]
Since $div(z^{\alpha}\nabla U)=0$, so $g(\theta)$ solves
\[
g''(\theta)+\alpha \cot{\theta}g'(\theta)+\beta(\alpha+\beta)g(\theta)=0.
\]
Then we can replace $g''(\theta)$ in $F(\theta)$ and calculate
\begin{equation}\label{F(theta)}
\begin{aligned}
F(\theta)&=(\beta^2-\beta)\cos^2{\theta}+\beta \sin^2{\theta}+\frac{g'(\theta)}{g(\theta)}(2-2\beta)\sin{\theta}\cos{\theta}\\
&-\frac{\alpha \cot{\theta}g'(\theta)+\beta(\alpha+\beta)g(\theta)}{g(\theta)}\sin^2{\theta}\\
&=(\beta^2-\beta)\cos^2{\theta}+\beta(1-\alpha-\beta)\sin^2{\theta}+(2-2\beta-\alpha)\sin{\theta}\cos{\theta}\frac{g'(\theta)}{g(\theta)}.
\end{aligned}
\end{equation}
First,
\[
F(0)=\beta^2-\beta+\frac{g'(0)}{g(0)}\sin{\theta}(2-\alpha-2\beta)=\beta^2-\beta
\]
since $\frac{g'(0)}{g(0)}\sin{\theta}=0$. And
\[
F(\pi)=\beta^2-\beta-\lim_{\theta \to \pi}\frac{g'(\theta)}{g(\theta)}\sin{\theta}(2-\alpha-2\beta)=\beta^2-\beta+2s(2-\alpha-2\beta)=(2s-\beta)(2s-\beta+1).
\]
by
\[
\lim_{\theta \to \pi}\frac{g'(\theta)}{g(\theta)}\sin{\theta}=-2s.
\]
Notice that we require $\gamma>0$ small enough such that $\beta=\frac{2s}{2-\gamma}\leq 1$ in the proof of \eqref{v_Rexp}. So $F(0)\leq 0$ and $F(\pi)>0$. Then we calculate $F'(\theta)$:
\begin{equation*}
\begin{aligned}
F'(\theta)&=\beta(2-\alpha-2\beta)\sin{2\theta}+\frac{1}{2}(2-\alpha-2\beta)\frac{gg''\sin{2\theta}+2gg'\cos{2\theta}-(g')^2\sin{2\theta}}{g^2}\\
&=\frac{\beta}{2}(2-\alpha-2\beta)(2-\alpha-\beta)\sin{2\theta}+\frac{1}{2}(2-\alpha-2\beta)\frac{g'}{g}(-2+(2-2\alpha)\cos^2{\theta})\\
&-\frac{1}{2}(2-\alpha-2\beta)(\frac{g'}{g})^2\sin{2\theta}.
\end{aligned}
\end{equation*}
When $F'(\theta)=0$,
\[
\sin{2\theta}(\frac{g'}{g})^2+(2-(2-2\alpha)\cos^2{\theta})\frac{g'}{g}-\beta(2-\alpha-\beta)\sin{2\theta}=0.
\]
Then 
\begin{equation}\label{g'/g1}
\begin{aligned}
\frac{g'}{g}&=\frac{-(-1-(1-\alpha)\cos^2{\theta})\pm\sqrt{(-1-(1-\alpha)\cos^2{\theta})^2+\beta(2-\alpha-\beta)\sin^2{2\theta}}}{\sin{2\theta}}\\
&=\frac{-(-1-(1-\alpha)\cos^2{\theta})\pm\sqrt{L(\theta)}}{\sin{2\theta}}.
\end{aligned}
\end{equation}
And also, by \eqref{F(theta)}, we can calculate that
\begin{equation}\label{g'/g2}
\begin{aligned}
\frac{g'}{g}=2\frac{F(\theta)-(\beta^2-\beta)\cos^2{\theta}-\beta(2-\alpha-2\beta)\sin^2{\theta}}{(2-\alpha-2\beta)\sin{2\theta}}.
\end{aligned}
\end{equation}
Compare \eqref{g'/g1} and \eqref{g'/g2}, we can calculate that if $F'(\theta)=0$ at some $\theta_0 \in (0,\pi)$, then at $\theta_0$,
\[
F(\theta)=(\beta^2-\beta)\cos^2{\theta}+\beta(2-\alpha-2\beta)\sin^2{\theta}+\frac{1}{2}(2-\alpha-2\beta)[-(-1-(1-\alpha)\cos^2{\theta})\pm\sqrt{L(\theta)}]
\]
is a bounded number. With the conditions that $F(0)=\beta^2-\beta$ and $F(\pi)=(2s-\beta)(2s-\beta+1)$, we can prove that
\[
|F(\theta)|\leq C(s,\gamma),
\]
which is equivalent to
\[
|\frac{U_{tt}}{U}|\leq \frac{C(s,\gamma)}{r^2}.
\]

\subsection{}\label{U_ttUsz}
In this section, we try to prove 
\[
|\frac{U_{tt}(\frac{\tau}{r},\frac{z}{r})}{U(\frac{\tau}{r},\frac{z}{r})}|\leq K_1.
\]
Let $\theta=\arctan{\frac{z}{\tau}}\in [0,\pi]$, and we know $(\frac{\tau}{r},\frac{z}{r}) \in B_{3/2}^+/B_{1/2}^+$. Since $U$ is homogeneous of degree $\beta$, we can see
\[
|\frac{U_{tt}(\frac{\tau}{r},\frac{z}{r})}{U(\frac{\tau}{r},\frac{z}{r})}|=(\frac{r}{\sqrt{\tau^2+z^2}})^2|F(\theta)|\leq 4|F(\theta)|
\]
with $F(\theta)$ defined in \eqref{F(theta)}. Then using the results in the last section,
\[
|\frac{U_{tt}(\frac{\tau}{r},\frac{z}{r})}{U(\frac{\tau}{r},\frac{z}{r})}|\leq 4|F(\theta)|\leq 4C(s,r)=K_1.
\]

\subsection{}\label{UKU}
We try to prove if $\tau$ is between $t+\bar{t}$ and $t$, with
\[
\bar{t}=-\frac{2(n-1)tr}{R}-\frac{\tilde{C}}{R^2}r^3<0,
\]
then
\[
U(\frac{\tau}{r},\frac{z}{r})\leq KU(\frac{t}{r},\frac{z}{r}).
\]
Let $\theta_1=arccos(\frac{\tau}{\sqrt{\tau^2+z^2}})$ and $\theta_2=arccos(\frac{t}{r})$.
Since $g(\theta)\geq 0$ and $g(\theta)=0$ only when $\theta=\pi$, we only need to prove the inequality near $\theta_2=\pi$. Since $(\frac{\tau}{r},\frac{z}{r})\in B_{3/2}^+/B_{1/2}^+$, $t+\bar{t}\leq \tau\leq t$, and near $\theta_2=\pi$, $t<0$, we can see
\[
0<\pi-\theta_1\leq \pi-\theta_2.
\]
As calculated in \eqref{g'sin/gatpi},
\[
\lim_{\theta \to \pi}{\frac{g'(\theta)\sin{\theta}}{g(\theta)}}=-2s<0
\]
with $g\geq 0$ and $\sin{\theta}\geq 0$, we can see
\[
g'(\theta)\leq 0
\]
as $\theta \to \pi$.
Therefore when $\theta_1,\theta_2$ are close to $\pi$
\[
g(\theta_1)\leq g(\theta_2),
\]
and thus there exists $\bar{K}>0$ such that
\[
g(\theta_1)\leq Kg(\theta_2)
\]
for $\theta_1=arccos(\frac{\tau}{\sqrt{\tau^2+z^2}})$ and $\theta_2=arccos(\frac{t}{r})$. And thus there exists $K>0$ such that
\[
U(\frac{\tau}{r},\frac{z}{r})\leq (\frac{3}{2})^{\beta}g(\theta_1) \leq (\frac{3}{2})^{\beta}\bar{K}g(\theta_2)=KU(\frac{t}{r},\frac{z}{r}).
\]

\end{document}